\documentclass[11pt,reqno]{amsart}

\usepackage[noruninsections]{preamble}
\mathtoolsset{showonlyrefs}
\makeatletter
\AtBeginDocument{%
  \let\label@in@display\cref@old@label@in@display
  \let\ltx@label\cref@old@label
}
\makeatother
\usepackage{latexsym}
\usepackage{fancybox}
\usepackage{eurosym}

\def\1{\raisebox{2pt}{\rm{$\chi$}}}
\renewcommand{\a}{\alpha}

\renewcommand{\R}{{\mathbb R}}

\renewcommand{\eps}{{\varepsilon}}

\def\vint_#1{\mathchoice%
  {\mathop{\kern 0.2em\vrule width 0.6em height 0.69678ex depth -0.58065ex
    \kern -0.8em \intop}\nolimits_{\kern -0.4em#1}}%
  {\mathop{\kern 0.1em\vrule width 0.5em height 0.69678ex depth -0.60387ex
    \kern -0.6em \intop}\nolimits_{#1}}%
  {\mathop{\kern 0.1em\vrule width 0.5em height 0.69678ex
    depth -0.60387ex \kern -0.6em \intop}\nolimits_{#1}}%
  {\mathop{\kern 0.1em\vrule width 0.5em height 0.69678ex depth -0.60387ex
    \kern -0.6em \intop}\nolimits_{#1}}}
\def\vintslides_#1{\mathchoice%
  {\mathop{\kern 0.1em\vrule width 0.5em height 0.697ex depth -0.581ex
    \kern -0.6em \intop}\nolimits_{\kern -0.4em#1}}%
  {\mathop{\kern 0.1em\vrule width 0.3em height 0.697ex depth -0.604ex
    \kern -0.4em \intop}\nolimits_{#1}}%
  {\mathop{\kern 0.1em\vrule width 0.3em height 0.697ex
    depth -0.604ex \kern -0.4em \intop}\nolimits_{#1}}%
  {\mathop{\kern 0.1em\vrule width 0.3em height 0.697ex depth -0.604ex
    \kern -0.4em \intop}\nolimits_{#1}}}

\newcommand{\aveint}[2]{\mathchoice%
  {\mathop{\kern 0.2em\vrule width 0.6em height 0.69678ex depth -0.58065ex
    \kern -0.8em \intop}\nolimits_{\kern -0.45em#1}^{#2}}%
  {\mathop{\kern 0.1em\vrule width 0.5em height 0.69678ex depth -0.60387ex
    \kern -0.6em \intop}\nolimits_{#1}^{#2}}%
  {\mathop{\kern 0.1em\vrule width 0.5em height 0.69678ex
    depth -0.60387ex \kern -0.6em \intop}\nolimits_{#1}^{#2}}%
  {\mathop{\kern 0.1em\vrule width 0.5em height 0.69678ex depth -0.60387ex
    \kern -0.6em \intop}\nolimits_{#1}^{#2}}}

\allowdisplaybreaks
\makeatletter
\renewcommand\paragraph{\@startsection{paragraph}{4}{\z@}%
  {.5\baselineskip \@plus 2\p@}{-.5em}{\normalfont\bfseries}}
\makeatother

{\catcode`p=12 \catcode`t=12 \gdef\eeaa#1pt{#1}}
\def\accentadjtext#1{\setbox0\hbox{$#1$}\kern
  \expandafter\eeaa\the\fontdimen1\textfont1 \ht0}
\def\accentadjscript#1{\setbox0\hbox{$#1$}\kern
  \expandafter\eeaa\the\fontdimen1\scriptfont1 \ht0}
\def\accentadjscriptscript#1{\setbox0\hbox{$#1$}\kern
  \expandafter\eeaa\the\fontdimen1\scriptscriptfont1 \ht0}
\def\accentadjtextback#1{\setbox0\hbox{$#1$}\kern
  -\expandafter\eeaa\the\fontdimen1\textfont1 \ht0}
\def\accentadjscriptback#1{\setbox0\hbox{$#1$}\kern
  -\expandafter\eeaa\the\fontdimen1\scriptfont1 \ht0}
\def\accentadjscriptscriptback#1{\setbox0\hbox{$#1$}\kern
  -\expandafter\eeaa\the\fontdimen1\scriptscriptfont1 \ht0}
\def\itoverline#1{{\mathsurround0pt\mathchoice
  {\rlap{$\accentadjtext{\displaystyle #1}
    \accentadjtext{\vrule height1.593pt}
    \overline{\phantom{\displaystyle #1}
    \accentadjtextback{\displaystyle #1}}$}{#1}}
  {\rlap{$\accentadjtext{\textstyle #1}
    \accentadjtext{\vrule height1.593pt}
    \overline{\phantom{\textstyle #1}
    \accentadjtextback{\textstyle #1}}$}{#1}}
  {\rlap{$\accentadjscript{\scriptstyle #1}
    \accentadjscript{\vrule height1.593pt}
    \overline{\phantom{\scriptstyle #1}
    \accentadjscriptback{\scriptstyle #1}}$}{#1}}
  {\rlap{$\accentadjscriptscript{\scriptscriptstyle #1}
    \accentadjscriptscript{\vrule height1.593pt}
    \overline{\phantom{\scriptscriptstyle #1}
    \accentadjscriptscriptback{\scriptscriptstyle #1}}$}{#1}}}}
\def\itunderline#1{{\mathsurround0pt\mathchoice
  {\rlap{$\underline{\phantom{\displaystyle #1}
    \accentadjtextback{\displaystyle #1}}$}{#1}}
  {\rlap{$\underline{\phantom{\textstyle #1}
    \accentadjtextback{\textstyle #1}}$}{#1}}
  {\rlap{$\underline{\phantom{\scriptstyle #1}
    \accentadjscriptback{\scriptstyle #1}}$}{#1}}
  {\rlap{$\underline{\phantom{\scriptscriptstyle #1}
    \accentadjscriptscriptback{\scriptscriptstyle #1}}$}{#1}}}}
\newcommand{\upperH}{\itoverline{H}}
\newcommand{\lowerH}{\itunderline{H}}
\newcommand{\Deltap}{\Delta_p}

\providecommand{\keywords}[1]{%
  \par\smallskip\noindent\textit{Key words and phrases.} #1}
\providecommand{\subjclass}[2][]{%
  \par\smallskip\noindent\textit{2020 Mathematics Subject Classification.} #2}

\title{Regularity of soda-can domains for the $p$-parabolic equation}
\author{Benny Avelin}
\address{Department of Mathematics, Uppsala University\\
  Box 480, 751 06 Uppsala, Sweden}
\author{Mikko Parviainen}
\address{Department of Mathematics and Statistics, University of Jyv\"askyl\"a\\
  P.O. Box 35, FI-40014 University of Jyv\"askyl\"a, Finland}
\date{October 5, 2026}

\begin{document}

\begin{abstract}
  We classify the regularity of the origin for the $p$-parabolic equation
  in the soda-can domains
  \begin{equation*}
    \Theta_{l,\theta}
    =\{(x,t)\in\R^n\times\R:0<-t<\theta |x|^l<\theta\},
    \qquad l,\theta>0.
  \end{equation*}
  The classification covers every $p>1$ in dimensions $n\ge2$.
  At the endpoint $l=p$, the origin is regular if and only if
  $p\ge2n/(n+1)$.  We also prove that the origin admits a traditional
  barrier for every $p>2$ and $l>0$.  When $2<p<n$ and $l<p$, the
  origin is nevertheless irregular.  This answers negatively the well known open
  question whether one barrier suffices to characterize boundary
  regularity in the degenerate range $p>2$.
\end{abstract}

\keywords{barrier family, boundary regularity, Perron method,
  $p$-parabolic equation, soda-can domain}
\subjclass[2020]{Primary 35K20; Secondary 31B25, 35B65, 35K65, 35K92}
\thanks{MP acknowledges funding from  the Research Council of Finland (project 360185). The proof development and the preparation of this note were assisted by ChatGPT.  The authors take full responsibility for the contents of the paper.
}
\maketitle

\tableofcontents

\clearpage

\section{Introduction}\label{sec:introduction}

We study boundary regularity for the $p$-parabolic equation
\begin{equation}\label{eq:p-parabolic}
  \partial_t u-\Deltap u=0,
  \qquad
  \Deltap u=\operatorname{div}(|\nabla u|^{p-2}\nabla u),
\end{equation}
with $1<p<\infty$.  The equation is degenerate when $p>2$, singular when $1<p<2$, and reduces to the heat equation when $p=2$.  A boundary point is regular if the corresponding upper Perron solution tends to the prescribed value for every continuous boundary datum.

Barriers are a standard tool for proving regularity.  A barrier is a positive supersolution that tends to zero at the given boundary point and has positive lower limits at every other boundary point.  For the heat equation, one barrier suffices.  Linearity makes every positive multiple a barrier, so its values away from the given point can be made arbitrarily large.  For $p\ne2$, multiplication by a constant does not in general preserve the supersolution inequality.  Indeed,
\begin{equation*}
  \partial_t(kw)-\Deltap(kw)=k\,\partial_t w-k^{p-1}\Deltap w,
  \qquad k>0.
\end{equation*}
The different powers of $k$ explain why a supersolution need not remain one after multiplication.  Bj\"orn--Bj\"orn--Gianazza--Parviainen \cite[Theorem~3.3]{BBGP15} proved that regularity is characterized by a family of barriers whose members become arbitrarily large on the boundary away from the given point.  The corresponding definition appears in \cref{def:barrier-family}.

Peter Lindqvist asked, cf.\ \cite{KL96}, whether one barrier can suffice for the $p$-parabolic equation.  The question was formulated in \cite[Section~8, Problem~1]{BBGP15}.  Bj\"orn--Bj\"orn--Gianazza \cite[Proposition~1.2]{BBG17} answered it negatively for $1<p<2$, but it has remained open in the degenerate range $p>2$. In this paper we settle this
question by constructing an irregular boundary point that admits a single traditional barrier when
$p>2$.  

The relevant domain is the soda-can domain
\begin{equation}\label{eq:soda-domain}
  \Theta_{l,\theta}
  =\{(x,t)\in\R^n\times\R\mid 0<-t<\theta |x|^l<\theta\},x
\end{equation}
where $n\ge2$ and $l,\theta>0$.  The origin lies on the top $t=0$, while the initial boundary $t=-\theta|x|^l$ approaches it from below, as shown in \cref{fig:soda-can}.  Larger $l$ gives a thinner domain near the origin.  For $p\ne2$, scaling shows that $\theta$ does not affect regularity \cite[Proposition~4.1]{BBG17}.  The exponent $l=p$ is invariant under the intrinsic scaling $(x,t)\mapsto(\lambda x,\lambda^pt)$ of the equation.

\begin{figure}[htb]
  \centering
  \begin{tikzpicture}[x=1.35cm,y=1.35cm,>=stealth,
      every node/.style={font=\small}]
    \begin{scope}
      \fill[black!10]
      (-1.55,0)
      -- plot[smooth,domain=-1.55:1.55,samples=80]
      (\x,{-1.35*(abs(\x)/1.55)^2})
      -- (1.55,0) -- cycle;
      \draw[thick]
      plot[smooth,domain=-1.55:1.55,samples=80]
      (\x,{-1.35*(abs(\x)/1.55)^2});
      \draw[thick]
      (-1.55,-1.35)--(-1.55,0)--(1.55,0)--(1.55,-1.35);
      \draw[dashed] (-1.55,-0.68)--(1.55,-0.68)
      node[right] {$t<0$};
      \draw[->] (-1.8,0)--(1.82,0) node[right] {$x_1$};
      \draw[->] (0,-1.55)--(0,0.36) node[above] {$t$};
      \fill (0,0) circle (1.25pt) node[above left] {$(0,0)$};
      \node at (-1.12,-0.30) {$\Theta_{l,\theta}$};
      \node[below left] at (1.35,{-1.35*(1.35/1.55)^2})
      {$t=-\theta |x|^l$};
    \end{scope}
  \end{tikzpicture}
  \caption{A schematic of the soda-can domain $\Theta_{l,\theta}$.  The name was suggested in \cite{BBGP15}, since the initial boundary resembles the bottom of a soda can.}
  \label{fig:soda-can}
\end{figure}

\subsection{Main results}

We classify regularity for all~$p>1$ and~$n\ge2$.  We write $p_*=2n/(n+1)$.

\begin{theorem}[Classification of regularity]\label{thm:classification}
  Let $n\ge2$, $1<p<\infty$, and $l,\theta>0$.  The origin is regular with
  respect to $\Theta_{l,\theta}$ if and only if one of the following holds:
  \begin{enumerate}
    \item $1<p<p_*$ and $l>p$
    \item $p_*\le p<n$ and $l\ge p$
    \item $p\ge n$, with no further restriction on $l$.
  \end{enumerate}
  When $p=2$, the statement is understood in the usual sense for the heat
  equation.
\end{theorem}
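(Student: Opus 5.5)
The classification splits into a regularity half and an irregularity half. Each half is handled with explicit self-similar or radial comparison functions, together with the barrier-family characterization of \cite[Theorem~3.3]{BBGP15}. By \cite[Proposition~4.1]{BBG17} we may fix $\theta$ freely. We also use the monotonicity in $l$: since $\Theta_{l,\theta}$ shrinks near the origin as $l$ grows, regularity for some $l$ passes to all larger $l$, and irregularity passes to all smaller $l$. This is valid for the local regularity notion, by the usual restriction principle. It therefore suffices to locate the threshold and to settle the endpoint $l=p$.

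\emph{Case $p\ge n$.} Here points have positive $p$-capacity, so we expect regularity for every $l$. I would build a barrier family from radial stationary-type functions $w_k(x,t)=k\,|x|^{\alpha}+M_k(-t)^{\beta}$. When $p>n$ we take $\alpha=(p-n)/(p-1)$, for which $|x|^\alpha$ is $p$-harmonic away from the origin. When $p=n$ we instead use a logarithmic profile, for example $k\log(1/|x|)$ replaced by a concave increasing function of $|x|$. The time term is tuned so that $\partial_t w_k\ge\Deltap w_k$ and $w_k\ge k$ on the lateral and initial boundary away from the origin. Since a $p$-superharmonic function in $x$ is automatically a $p$-parabolic supersolution, the only delicate part is that the time correction does not spoil this. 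It should not, because the spatial term alone already has the needed size once $l$ is arbitrary; the bottom boundary is covered by the spatial growth $|x|^\alpha$ whenever $|x|\gtrsim(-t)^{1/l}$.

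\emph{Case $p<n$, $l>p$ versus $l<p$.} I would use intrinsic-scaling barriers of the form $w=(-t)^{\gamma}\,\phi\big(|x|^p/(-t)\big)$ or Barenblatt-type profiles. For $l>p$ the domain lies inside the region $|x|^p\gg -t$, where a profile $\phi(s)$ growing in $s$ gives large values on the initial boundary. We then prove that each $kw$ can be fixed into a supersolution by adjusting $\gamma$; this yields the barrier family and hence regularity. For irregularity when $l<p$, I would construct a subsolution near the origin from a truncated Barenblatt solution centered at a point $(0,-\tau)$ slightly below. For $p>2$ it has compact support and expands like $|x|\sim t^{1/\lambda}$ with $\lambda=n(p-2)+p$; for $p<2$ we use its fat-tail analogue. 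The subsolution vanishes on the parabolic boundary of the soda can, because the domain with $l<p$ opens faster than the support, yet it stays positive at the origin, so the Perron solution with zero data near the origin fails to vanish there. Comparing with the data $1-\text{something}$ gives the contradiction to regularity.

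\emph{The endpoint $l=p$.} This is the main obstacle, and the threshold $p_*=2n/(n+1)$ must come out of a self-similar ODE. Writing $u=(-t)^{-\gamma}F(|x|/(-t)^{1/p})$ with the scaling exponent forced by $l=p$ reduces the problem to a radial ODE for $F$ on $[c,\infty)$, where $c=\theta^{-1/p}$ marks the initial boundary. Regularity is equivalent to the existence of positive supersolutions of this ODE with $\gamma<0$ that blow up relative to the value at infinity. I would analyse the ODE by phase-plane methods, looking at its behaviour as $s\to\infty$, where the balance between $\partial_t$ and $\Deltap$ gives power tails. I expect the critical exponent to appear as the condition that the tail exponent $p/(2-p)$, from the singular time-homogeneous balance, is compatible with $n$; this is exactly $p\ge 2n/(n+1)$. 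For $p\ge p_*$ I would build explicit supersolutions from the resulting profile. For $p<p_*$ I would produce an explicit self-similar subsolution that stays positive at the origin, in the spirit of the fast-diffusion irregularity argument of \cite{BBG17}. Verifying signs in this ODE analysis uniformly in $n$ is where I expect the real work.
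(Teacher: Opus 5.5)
Your proposal has genuine gaps. Two of your steps fail as designed, and the third rests on a reduction that does not exist.

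\emph{The case $p=n$.} Points have zero $n$-capacity, so the spatial part of your $w_k$ cannot be a positive $n$-superharmonic function tending to $0$ at the origin. Such a function would be a barrier at the centre of a punctured ball, and that point is irregular for the $n$-Laplacian. Radially, with $\rho=\log(1/|x|)$, one has $\Delta_n g=(n-1)|x|^{-n}|g'(\rho)|^{n-2}g''(\rho)$, and a positive function concave in $\rho$ cannot decrease to $0$. So the time term is not a harmless correction. It would have to dominate $k^{n-1}\Delta_n g>0$ near the origin, while staying below $kg$ and vanishing at $0$. With $M_k>0$ it even has $\partial_t<0$. Since $k^{n-1}\gg k$ for $n>2$, this competition is exactly the obstruction to amplification, and your sketch does not address it. (For $p>n$ your family $k|x|^{(p-n)/(p-1)}$ does work, with no time term.) The paper's barriers for $0<l<p=n>2$ are genuinely parabolic: $h_j=j\rho^{-\mu}F(y)$ with $y=l\rho/\log(1/(-t))$ and $F$ concave and decreasing. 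The amplitude $j$ is tied to the localization $\rho>j$, and $h_j$ is pasted to the constant $\frac12 j^{1-\mu}$.

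\emph{Irregularity for $l<p$.} A Barenblatt solution started at $(0,-\tau)$ is continuous and positive in a full neighbourhood of the origin. This includes the initial boundary $-t=|x|^l$ near $0$, because the hole radius $(-t)^{1/l}$ tends to $0$ while the support radius stays comparable to $\tau^{1/\lambda}$; for $p<2$ the solution is positive everywhere. A comparison function continuous at the origin cannot detect irregularity, since its limits along the boundary and along interior paths coincide. Your argument also never uses $p<n$, although the origin is regular for every $l$ when $p\ge n$. The paper instead compares with the supersolution $\Phi-L(-t)^\delta$, where $\Phi=\bigl((-t)^{1/l}/|x|\bigr)^\gamma$ and $\gamma>0$ is small; the condition $\gamma<(n-p)/(p-1)$ is where $p<n$ enters. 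This function tends to $1$ along the initial boundary but to $0$ along the curve $-t=|x|^p$, which lies in $\Theta_{l,1}$ because $l<p$. Hence the lower Perron solution of data with $f(0,0)=1$ does not attain its boundary value.

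\emph{The endpoint $l=p$.} For $p\ne2$ your ansatz does not reduce to an ODE. For $u=(-t)^{-\gamma}F(s)$ with $s=|x|(-t)^{-1/p}$, one gets $u_t=(-t)^{-\gamma-1}G_1(s)$ but $\Delta_pu=(-t)^{-\gamma(p-1)-1}G_2(s)$. The reason is that the scaling $(x,t)\mapsto(\lambda x,\lambda^pt)$ leaves the equation invariant only without a change of amplitude. The only true ODE case, $\gamma=0$, produces functions constant along each curve $|x|=s(-t)^{1/p}$, which have no limit at the origin unless constant. So the claimed equivalence between regularity and ODE supersolutions is unfounded.

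What decides matters is the leftover factor $(-t)^{-\gamma(p-2)}$. For $p>2$ and $\gamma<0$ it is at most $1$. This is exactly how the paper's $w_\alpha=\frac{C_\alpha}{\alpha}\bigl(|x|^\alpha-\eta(-t)^{\alpha/p}\bigr)$ works, and the family comes from letting $\alpha\downarrow0$ with $C_\alpha/\alpha\to\infty$, not from multiplying by $k$. For $p<2$ the factor blows up as $t\to0$, so self-similarity must be broken. The paper uses $\rho^{-a}\bigl(1-\rho^{-d_1}y^{\beta_1}-\rho^{-d_2}y^{\beta_2}\bigr)$ with $y=-t/|x|^p$ and $\beta_1<\frac{n-p}{p(p-1)}<\beta_2<\frac1{2-p}$. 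This interval is nonempty exactly when $p>p_*$; your condition $p/(2-p)\ge n$ is the same numerology. At $p=p_*$ the interval collapses to the point $\frac{n+1}2$, and the paper needs a $(\log\rho)^{-1/4}$ prefactor and a $\log(1/y)$ correction of the $y^{(n+1)/2}$ term.

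Finally, regularity for $l>p$ when $p<p_*$, and irregularity at $l=p$ when $p<p_*$, are only gestured at in your proposal. The paper takes these, together with the cases $p>n$ and $p=2$, from \cite[Propositions~1.2--1.5]{BB26} and \cite[Proposition~4.2]{BBGP15}.
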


For $p<n$, the threshold $p_*$ determines regularity at the endpoint $l=p$.  The same value separates the supercritical and subcritical ranges in the local regularity theory for singular parabolic equations~\cite[Chapter~6]{DGV12}.

Our new cases complete the classification previously left open, to the best of the authors' knowledge.  For $2<p<n$, we prove irregularity when $l<p$ and regularity at $l=p$.  For $p=n>2$, we prove regularity for all $l>0$.  These results settle Open Problem~1.9(a) of Bj\"orn--Bj\"orn~\cite{BB26}.  For $p_*<p<2$, we prove irregularity for $(n-p)(2-p)/(p-1)\le l<p$, and for $p_*\le p<2$ we prove regularity at $l=p$, including the double-critical case $p=p_*$.  This answers their Open Problem~1.10.  The remaining cases follow from their Propositions~1.2--1.5 and~\cite[Proposition~4.2]{BBGP15}.

Our second result shows that a traditional barrier exists even where the origin is irregular.

\begin{theorem}[Traditional barriers for $p>2$]\label{thm:traditional-barrier}
  Let $n\ge2$, $p>2$, and $l,\theta>0$.  Then the origin admits a traditional
  barrier with respect to $\Theta_{l,\theta}$.
\end{theorem}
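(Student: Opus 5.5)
The plan is to build the barrier explicitly as a difference of a radial spatial power and a power of $-t$,
\begin{equation*}
  w(x,t)=A|x|^\beta-b(-t)^{\mu},\qquad A,b>0,\ \beta,\mu>0,
\end{equation*}
and to use the geometry of $\Theta_{l,\theta}$ to make $w$ both positive and a supersolution. First I record the boundary structure. Since $-t<\theta|x|^l$ in $\Theta_{l,\theta}$, no point $(0,t)$ with $t<0$ lies in the closure. Hence the only boundary point with $x=0$ is the origin. The remaining boundary points lie on the top $\{t=0,\,0<|x|\le1\}$, on the bottom $\{t=-\theta|x|^l\}$, and on the lateral part $\{|x|=1\}$.

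Take $\mu=\beta/l$ and $b\theta^{\beta/l}<A$. Then
\begin{equation*}
  w\ge\bigl(A-b\theta^{\beta/l}\bigr)|x|^\beta>0
\end{equation*}
throughout $\overline{\Theta}_{l,\theta}\setminus\{0\}$, with $w\to0$ only at the origin. So $w$ is positive, continuous up to the boundary, vanishes at the origin, and has positive lower limits at every other boundary point. This is exactly the traditional barrier requirement, once the supersolution property holds.

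For the supersolution inequality, a direct radial computation gives
\begin{equation*}
  \Deltap(A|x|^\beta)=A^{p-1}\beta^{p-1}\bigl((\beta-1)(p-1)+n-1\bigr)|x|^{\beta(p-1)-p},
\end{equation*}
and $\partial_t w=b\mu(-t)^{\mu-1}$. When $\mu\le1$, the constraint $-t<\theta|x|^l$ gives $(-t)^{\mu-1}\ge\theta^{\mu-1}|x|^{l(\mu-1)}=\theta^{\mu-1}|x|^{\beta-l}$. It then suffices to have
\begin{equation*}
  b\mu\theta^{\mu-1}|x|^{\beta-l}\ge C_\beta A^{p-1}|x|^{\beta(p-1)-p}
  \qquad\text{for }0<|x|<1.
\end{equation*}
This holds provided $\beta-l\le\beta(p-1)-p$, i.e.\ $\beta(p-2)\ge p-l$, together with $b\mu\theta^{\mu-1}\ge C_\beta A^{p-1}$. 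This is exactly where $p>2$ enters, in two ways. The exponent condition can be met by taking $\beta$ large. Moreover, the coefficient condition $C_\beta A^{p-1}\lesssim b<A\theta^{-\beta/l}$ is solvable by taking $A$ small, since $p-1>1$; this mirrors the failure of scaling invariance that prevents the argument for $p<2$. Since $w$ is $C^2$ away from $\{x=0\}$, which does not meet $\Theta_{l,\theta}$, the pointwise inequality yields a weak supersolution.

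The combined constraints $\beta\le l$ (needed for $\mu\le1$) and $\beta\ge(p-l)/(p-2)$ are compatible precisely when $l\ge p/(p-1)$. I expect the main obstacle to be the range $0<l<p/(p-1)$, where the domain is fatter near the origin and the spatial term $|x|^{\beta(p-1)-p}$ dominates the gain from $\partial_t$. There I would first try to replace the power ansatz by a self-similar one adapted to the domain, $w=|x|^\beta F(-t/(\theta|x|^l))$ with $F$ positive and decreasing on $[0,1)$. Because $|x|^{\beta}$ is then multiplied by a profile that genuinely couples $x$ and $t$, the $t$-derivative term scales like $|x|^{\beta-l}F'$. The supersolution inequality thus reduces to an ODE inequality for $F$ with a power weight $|x|^{\beta(p-2)-p+l}$, which is bounded for $\beta$ large. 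One then chooses $F$ so that $-F'$ is large compared with $F^{p-1}$, using again the freedom of a small amplitude when $p>2$. If this fails, a fallback is to note that supersolutions are local: I would build the barrier only in a small neighbourhood $\Theta_{l,\theta}\cap\{|x|<\delta\}$ and extend it by a large constant via the pasting lemma for supersolutions, taking minima.
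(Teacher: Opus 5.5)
You have not proved the theorem for $0<l<p/(p-1)$. Your function $A|x|^\beta-b(-t)^{\beta/l}$ is a valid barrier when $l\ge p/(p-1)$: there you can pick $\beta\le l$ with $\beta(p-2)\ge p-l$, and your estimates close for small $A$. It is the analogue of the paper's endpoint barrier $w_\alpha$ in Section~4.

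For $0<l<p/(p-1)$, however, the self-similar ansatz is only something you ``would first try.'' The reduction to an ODE inequality is not carried out and no profile $F$ is given.

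Your fallback would not close this gap either. In your argument, the comparison of $|x|^{\beta-l}$ with $|x|^{\beta(p-1)-p}$ fails as $x\to0$ along the initial boundary $-t=\theta|x|^l$. The obstruction therefore sits at the origin itself. Pasting with a large constant only changes the function away from the origin, so localization cannot remove it.

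The missing step is short, and your self-similar plan is exactly the paper's proof. It also works for every $l>0$, so the case split and the difference-of-powers construction become unnecessary. Take $F(y)=C(1-\eta y)$ with $0<\eta<1$, that is, $w=C|x|^\alpha(1-\eta y)$ with $y=(-t)/(\theta|x|^l)\in(0,1)$ and $\alpha>\max\{l,(p-l)/(p-2)\}$.

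With $r=|x|$ you get $w_r=Cr^{\alpha-1}[\alpha-\eta(\alpha-l)y]\in(0,\alpha Cr^{\alpha-1}]$ and $|w_{rr}|\le KCr^{\alpha-2}$. So the gradient does not vanish, the radial formula applies, and $\Deltap w\le MC^{p-1}r^{\alpha(p-1)-p}$. Meanwhile $w_t=C\eta\theta^{-1}r^{\alpha-l}$. Hence
\begin{equation*}
  w_t-\Deltap w\ge Cr^{\alpha-l}\bigl[\eta\theta^{-1}-MC^{p-2}r^{\alpha(p-2)+l-p}\bigr]\ge0
\end{equation*}
as soon as $MC^{p-2}\le\eta/\theta$, because $\alpha(p-2)+l-p>0$ and $r<1$. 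The two-sided bound $C(1-\eta)|x|^\alpha<w<C|x|^\alpha$ then gives the required boundary behaviour, including positive lower limits on the initial boundary.

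One correction to your heuristic: the relevant comparison is not $-F'$ against $F^{p-1}$. It is $-F'$ against the $(p-1)$-homogeneous expression in $(F,F',F'')$ that the $p$-Laplacian produces. The amplitude $C$ enters the first linearly and the second as $C^{p-1}$, and this is exactly where $p>2$ is used.
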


This answers \cite[Open Problem~1.9(b)]{BB26}.  Combining the theorem with the irregular cases in \cref{thm:classification} answers Lindqvist's question in the degenerate range.

\begin{corollary}[One barrier at an irregular point]\label{cor:one-barrier}
  If
  \begin{equation*}
    2<p<n,
    \qquad 0<l<p,
    \qquad \theta>0,
  \end{equation*}
  then the origin is an irregular boundary point of $\Theta_{l,\theta}$
  that admits a traditional barrier.
\end{corollary}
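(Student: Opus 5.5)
The corollary follows by combining \cref{thm:classification} and \cref{thm:traditional-barrier}, both taken as given here.

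First I will check that the parameter range $2<p<n$, $0<l<p$, $\theta>0$ satisfies none of the three regularity alternatives in \cref{thm:classification}. Since $n\ge2$, we have $p_*=2n/(n+1)<2<p$. So alternative (1), which requires $p<p_*$, fails. Alternative (2) requires $p_*\le p<n$ and $l\ge p$; the first two conditions hold, but $l<p$ by assumption, so it fails as well. Alternative (3) requires $p\ge n$, which contradicts $p<n$. The case $p=2$ does not arise because $p>2$. By the ``only if'' direction of \cref{thm:classification}, the origin is therefore irregular with respect to $\Theta_{l,\theta}$.

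Second, since $p>2$, $n\ge2$ and $l,\theta>0$, \cref{thm:traditional-barrier} applies directly. It gives a traditional barrier at the origin for $\Theta_{l,\theta}$, and this holds with no restriction on $l$, in particular for $l<p$.

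The two conclusions together give exactly the statement. No analytic obstacle arises at this level. The substance lies in the two theorems: the irregularity for $l<p$ in the degenerate range, and the construction of a single barrier that cannot be rescaled into a barrier family. The one point to watch is the strict inequality $p_*<p$, which ensures that neither the $l>p$ alternative nor the endpoint $l=p$ is in play.
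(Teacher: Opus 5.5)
Your argument is correct and matches the paper's proof, which combines \cref{thm:irregularity-below} with \cref{thm:traditional-barrier}. Routing the irregularity through the ``only if'' part of \cref{thm:classification} is equivalent, since in this range that part is proved precisely by \cref{thm:irregularity-below}. One small correction to your closing remark: the hypothesis $l<p$, not $p_*<p$, rules out alternatives (1) and (2), and $p<n$ rules out (3).
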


The irregularity is proved in \cref{sec:irregularity}, and the traditional barrier is constructed in \cref{sec:traditional-barrier}.  Taking $l=2$ in the corollary gives the following geometric consequence.

\begin{corollary}[Exterior ball at a north pole]\label{cor:north-pole}
  For every $2<p<n$, there is a bounded domain with an irregular boundary
  point $\xi_0$ that admits a traditional barrier.  An exterior Euclidean
  space-time ball touches the boundary from the past at $\xi_0$, and
  $\xi_0$ is its north pole.
\end{corollary}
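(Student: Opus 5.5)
The statement is Corollary~\ref{cor:north-pole}. I would derive it from \cref{cor:one-barrier} with $l=2$, which applies since $2<p<n$ gives $0<2<p$. Fix $\theta>0$, to be chosen below. Then the origin $\xi_0=(0,0)$ is an irregular boundary point of $\Theta_{2,\theta}$ that admits a traditional barrier. The domain $\Theta_{2,\theta}=\{0<-t<\theta|x|^2<\theta\}$ is bounded, since $|x|<1$ and $-\theta<t<0$. It is also open and connected, because it is star-shaped in the $t$-direction over the punctured ball, which is connected for $n\ge2$. So only the geometric claim remains: find an exterior Euclidean space-time ball with north pole $\xi_0$ that touches $\partial\Theta_{2,\theta}$ from the past.

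Take the ball $B=\{(x,t):|x|^2+(t+R)^2<R^2\}$ centred at $(0,-R)$. Its north pole is $(0,0)=\xi_0$, and it lies in $\{t<0\}$, so it touches $\xi_0$ from below, that is, from the past. We need $B\cap\Theta_{2,\theta}=\emptyset$, at least near $\xi_0$. A point of $B$ satisfies $|x|^2<-t(2R+t)<-2Rt$, so $-t>|x|^2/(2R)$. Points of $\Theta_{2,\theta}$ satisfy $-t<\theta|x|^2$. Choosing $R\le 1/(2\theta)$ gives $|x|^2/(2R)\ge\theta|x|^2$, so no point lies in both sets. Hence $\overline B\cap\overline{\Theta_{2,\theta}}$ contains the origin, and $B$ is an exterior ball. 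Since $\theta$ is free, I could also fix $R$ and pick $\theta=1/(2R)$.

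The only subtle step is the meaning of ``touches from the past'' in the paper's convention (following \cite{BBGP15}). I expect it to mean that the ball lies in the complement, has $\xi_0$ on its boundary, and $\xi_0$ is its north pole, i.e.\ the point with maximal $t$. The computation above gives exactly this. Regularity depends only on the local geometry, and $\theta$ does not affect regularity \cite[Proposition~4.1]{BBG17}. So there is no obstruction, and all the real content is already in \cref{cor:one-barrier}.
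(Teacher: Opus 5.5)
Your proposal is correct and is essentially the paper's argument: the paper also applies \cref{cor:one-barrier} with $l=2$, but fixes $\theta=1$ and the ball of radius $1/2$ centred at $(0,-1/2)$, which is your case $R=1/(2\theta)$. The same inequality, $-t\ge(|x|^2+t^2)/(2R)$, shows that even the closed ball lies in the complement of $\Theta_{2,\theta}$, meeting $\overline{\Theta_{2,\theta}}$ only at the origin.
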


The corollary gives a negative answer to the north-pole question posed in~\cite[Section~8, Problem~5]{BBGP15} in the range~$2<p<n$.  For the heat equation, an exterior space-time ball touching the boundary from the past at its north pole does imply regularity, as a consequence of the tusk condition of Effros--Kazdan~\cite{EK71}.  
For the standard $p$-parabolic equation, the exterior-ball conditions in the literature exclude the north pole touching.

\subsection{Background}

Wiener characterized regularity for the Laplace equation by a capacity condition~\cite{Wiener24}.  For the elliptic $p$-Laplacian, Kilpel\"ainen--Mal\'y~\cite{KM94} established the necessity of the nonlinear Wiener criterion for $1<p\le n$, completing the characterization in that range.  For the heat equation, the corresponding necessity and sufficiency were proved by Lanconelli~\cite{Lan73} and Evans--Gariepy~\cite{EG82}.  For the $p$-parabolic equation with $p\ne2$, a necessary and sufficient Wiener criterion in general space-time domains remains open.  In cylinders, lateral boundary regularity is equivalent to elliptic $p$-harmonic regularity of the spatial boundary point, as shown by Kilpel\"ainen--Lindqvist~\cite{KL96} and~\cite[Theorem~3.9]{BBGP15}.  Quantitative Wiener-type estimates are also known in cylindrical domains for equations of $p$-Laplace type.  Gianazza--Liao--Lukkari~\cite{GLL18} treat $p_*<p<2$, Gianazza--Liao~\cite{GL20} treat $p>2$, and~\cite{GL22} treats $1<p\le p_*$.  These estimates use elliptic $p$-capacity, with a nonoptimal sufficient capacity exponent in the last range.

Several notions of nonlinear parabolic capacity have been developed for the $p$-Laplacian.  A functional approach was introduced in~\cite{DPP03}.  In the degenerate range, parabolic capacity was related to polar sets in~\cite{KKKP13}, and an equivalent variational capacity with estimates for explicit sets was developed in~\cite{AKP15}.  A variational definition valid for all $1<p<\infty$ was given in~\cite{MS25}.  These works address capacity, polar sets, and removability, but do not classify regularity at the moving inner boundary in~\eqref{eq:soda-domain}.  Related boundary estimates include Carleson estimates in cylinders and time-dependent domains, boundary H\"older estimates, and intrinsic Harnack chains~\cite{AGS16,A16,AKN19}.  They concern nonnegative solutions vanishing on a boundary portion.  We ask when Perron solutions attain arbitrary continuous data.

Our approach uses Perron solutions and barrier families in the formulation of~\cite{BBGP15}, following Perron's method for the Laplace equation~\cite{Perron23}, its extension to the heat equation by Sternberg~\cite{Sternberg29}, and the development of potential theory by Granlund--Lindqvist--Martio~\cite{GLM86} and, in the parabolic case, by Kilpel\"ainen--Lindqvist~\cite{KL96}.  Boundary regularity is local and independent of the future~\cite[Section~3]{BBGP15}.  The geometry of moving boundaries is classically tied to Petrovski\u\i's criterion for the heat equation~\cite{Petrovskii35}.  Lindqvist~\cite{L95} and Bj\"orn--Bj\"orn--Gianazza--Parviainen~\cite[Sections~6--7]{BBGP15} developed nonlinear counterparts using Barenblatt-type constructions.  Bj\"orn--Bj\"orn--Gianazza~\cite[Theorem~1.1]{BBG17} classified regularity for the power domains $|x|<K(-t)^{\nicefrac{1}{l}}$ in almost all cases.  There $l<p$ gives regularity and $l>p$ irregularity, while at $l=p$ the origin is irregular for $p>2$ and regular for $p=2$.  The singular case remained open.  The soda-can domain has the complementary geometry and lies outside the moving sphere.  For the heat equation, related rotationally symmetric domains were studied by Abdulla~\cite{Abdulla05}.

The normalized $p$-parabolic equation is another useful comparison because its spatial operator is homogeneous of degree one in the dependent variable.  Banerjee--Garofalo~\cite{BG15} characterize regularity by a single barrier, and Bj\"orn--Bj\"orn--Parviainen~\cite{BBP19} prove a tusk condition and a sharp Petrovski\u\i -criterion. It also follows that in this case, the north pole touching can be included into exterior-ball condition.   
 Kurkinen~\cite[Theorems~5.5 and~6.2]{K24} extends the barrier-family characterization and the singular counterexample to a class containing both the normalized and usual $p$-parabolic equations.  This comparison highlights the role of amplitude scaling in the single-barrier question.

\subsection{Ideas of the proofs}
\label{sec:idea}

The proofs construct smooth radial supersolutions whose spatial gradients do not vanish near the origin. This allows us to check the supersolution inequality pointwise. For barrier supersolutions with positive time derivative, multiplication by a factor at most $1$ preserves the supersolution inequality when $p>2$, while multiplication by a factor at least $1$ preserves it when $p<2$. In the singular range, we multiply local profiles by large constants and use the pasting lemma to join them to constants outside small neighborhoods. This gives barrier families. In the degenerate range, small multiples give a traditional barrier, but do not provide the arbitrarily large boundary values required of a barrier family, so additional ideas are needed.

\paragraph{Irregularity and a traditional barrier.} For $l<p$, the function $$\Phi(x,t)=\left(\frac{(-t)^{\nicefrac{1}{l}}}{|x|}\right)^\gamma$$ from~\cite{BB26} is at most $1$ in the domain, equals $1$ on the initial boundary, and tends to zero along the curve $-t=|x|^p$. When $1<p<n$ and $p\ne2$, adding a suitable function of $t$ to $\Phi$ makes it a supersolution. The comparison principle then shows that the lower Perron solution does not attain the boundary value $1$ at the origin, proving irregularity. 

For $p>2$, we construct a traditional barrier of the form $$w=C|x|^\alpha(1-\eta y),
$$
where $y=(-t)/|x|^l$ and $0<\eta<1$. Its time derivative is proportional to $C$, while its $p$-Laplacian is proportional to $C^{p-1}$. Since $p-1>1$, choosing $C$ small makes the time derivative dominate. A similar comparison of the time derivative with the $p$-Laplacian appears in the exterior-ball barrier of Kilpel\"ainen--Lindqvist, as discussed in~\cite[discussion after Proposition~1.3]{BB26}.

\paragraph{The degenerate cases.} For $l=p$ and $2<p\le n$, we use
\begin{equation*}
  w_\alpha(x,t)=\frac{C_\alpha}{\alpha}
  \left(|x|^\alpha-\eta(-t)^{\nicefrac{\alpha}{p}}\right),
  \qquad 0<\eta<1.
\end{equation*}
For each small $\alpha>0$, we choose $C_\alpha$ so that $w_\alpha$ is a supersolution. It tends to zero at the origin, while $C_\alpha/\alpha\to\infty$ as $\alpha\downarrow0$ makes its boundary values arbitrarily large away from the origin.

When $p=n>2$ and $l<p$, set $\rho=\log(1/|x|)$ and $y=l\rho / \log(1/(-t))$. For a small fixed $\mu>0$, the local profile is
\begin{equation*}
  h_j(x,t)=j\rho^{-\mu}\left(1-\frac y4-\frac{y^2}4\right).
\end{equation*}
Since $n=p$, the first-order term cancels when the radial $p$-Laplacian is written in $\rho$. The quadratic factor in $h_j$ has second derivative $-1/2$ with respect to $y$. When $y$ stays away from zero and $\mu$ is small, this gives $\Deltap h_j\le0$. Near $t=0$, the positive time derivative dominates any positive $\Deltap h_j$. For large $j$, this is a supersolution on $\rho>j$. Truncating $h_j$ at a constant and pasting across $\rho=j$ gives a barrier family.

\paragraph{The singular endpoint.} Set $\rho=\log(1/|x|)$ and $y=(-t)/|x|^p$. For $l=p$ and $p_*<p<2$, we use
\begin{equation*}
  h(x,t)=\rho^{-a}
  \left(1-\rho^{-d_1}y^{\beta_1}-\rho^{-d_2}y^{\beta_2}\right),
  \qquad 0<\beta_1<1,
  \quad \beta_1<\frac{n-p}{p(p-1)}<\beta_2<\frac1{2-p}.
\end{equation*}
The $\beta_2$ correction makes $\Deltap h\le0$ unless $y$ is small. There the $\beta_1$ correction gives a time derivative that dominates any positive $\Deltap h$.

At $l=p=p_*$, the two bounds on $\beta_2$ both equal $(n+1)/2$. With small $\varepsilon>0$, we use
\begin{equation*}
  h(x,t)=(\log\rho)^{-\nicefrac{1}{4}}
  \left[1-\varepsilon\rho^{-\nicefrac{1}{2}}y^{\nicefrac{1}{2}}
  -\varepsilon y^{\frac{n+1}{2}}
  \left(1+\frac{1+\log(1/y)}{\sqrt{\log\rho}}\right)^2\right].
\end{equation*}
The $y^{\nicefrac{1}{2}}$ term supplies $h_t$ near $t=0$. The exponent
$(n+1)/2$ makes the leading spatial contribution of the second correction
vanish. The logarithmic factor in that correction controls the remaining
spatial error. In both cases, multiplication and pasting give a barrier family.

\paragraph{Organization.}  \Cref{sec:preliminaries} recalls Perron solutions, barriers, comparison, pasting, and the radial formula.  \Cref{sec:single-barrier} proves irregularity for $l<p$ and constructs the traditional barrier, which gives \cref{cor:one-barrier,cor:north-pole}.  \Cref{sec:degenerate} treats the degenerate endpoint and the critical dimension.  \Cref{sec:singular-endpoint} treats the singular endpoint and ends with the proof of \cref{thm:classification}.  The technical estimates for that section are collected in \cref{app:section5-estimates}.

\section{Preliminaries}\label{sec:preliminaries}

The constructions below produce smooth supersolutions, while boundary regularity is expressed through Perron solutions and $p$-superparabolic barriers. We recall the passage between these notions, the comparison and pasting principles.

Let $\Theta\subset\R^{n+1}$ be bounded, nonempty, and open, and let
$1<p<\infty$.  For an open set
$U\subset\R^n$ and $t_1<t_2$, write
$U_{t_1,t_2}=U\times(t_1,t_2)$.

\begin{definition}[Weak solutions and $p$-parabolic functions]
  \label{def:weak-solutions}
  A function $u:\Theta\to[-\infty,\infty]$ is a weak solution of
  \eqref{eq:p-parabolic} if, whenever
  $U_{t_1,t_2}\Subset\Theta$ is an open cylinder,
  \begin{equation*}
    u\in C\bigl((t_1,t_2);L^2(U)\bigr)
    \cap L^p\bigl((t_1,t_2);W^{1,p}(U)\bigr)
  \end{equation*}
  and
  \begin{equation}\label{eq:weak-formulation}
    \int_{t_1}^{t_2}\!\int_U
    |\nabla u|^{p-2}\nabla u\cdot\nabla\varphi\,dx\,dt
    -\int_{t_1}^{t_2}\!\int_U
    u\,\partial_t\varphi\,dx\,dt=0
  \end{equation}
  for every $\varphi\in C_0^\infty(U_{t_1,t_2})$.  A continuous weak
  solution is called a $p$-parabolic function.

  A function $u$ is a weak supersolution, respectively weak subsolution,
  if $u\in L^p\bigl((t_1,t_2);W^{1,p}(U)\bigr)$ for every such cylinder and the
  left-hand side of \eqref{eq:weak-formulation} is nonnegative,
  respectively nonpositive, for every nonnegative
  $\varphi\in C_0^\infty(U_{t_1,t_2})$.
\end{definition}

As in \cite[Section~2]{BBGP15}, we often omit ``weak'' when referring to
weak solutions, supersolutions, and subsolutions.

For a box $Q=(a_1,b_1)\times\cdots\times(a_n,b_n)\subset\R^n$, the set
$Q_{t_1,t_2}=Q\times(t_1,t_2)$ is called a space-time box, and its
parabolic boundary is
\begin{equation*}
  \partial_pQ_{t_1,t_2}
  =\bigl(\overline Q\times\{t_1\}\bigr)
  \cup\bigl(\partial Q\times(t_1,t_2]\bigr).
\end{equation*}

The comparison definition below follows Kilpel\"ainen--Lindqvist~\cite[Definition~4.1]{KL96} and Korte--Kuusi--Parviainen~\cite[Definition~2.6]{KKP10}.
\begin{definition}[$p$-superparabolic and $p$-subparabolic functions]
  \label{def:superparabolic}
  A function $u:\Theta\to(-\infty,\infty]$ is $p$-superparabolic if
    \begin{enumerate}[label=(\roman*)]
        \item $u$ is lower semicontinuous;
        \item $u$ is finite in a dense subset of $\Theta$;
        \item $u$ satisfies the following comparison principle on each
              space-time box $Q_{t_1,t_2}\Subset\Theta$: if $h$ is $p$-parabolic in
              $Q_{t_1,t_2}$ and continuous on $\overline{Q_{t_1,t_2}}$, and if
              $h\le u$ on $\partial_pQ_{t_1,t_2}$, then $h\le u$ throughout
              $Q_{t_1,t_2}$.
      \end{enumerate}
    A function $v:\Theta\to[-\infty,\infty)$ is
  $p$-subparabolic if $-v$ is $p$-superparabolic.
\end{definition}

For bounded $f:\partial\Theta\to\R$, let the upper class $\mathcal U_f$
consist of the $p$-superparabolic functions $u$ in $\Theta$, bounded from
below, such that
\begin{equation*}
  \liminf_{\Theta\ni\eta\to\xi}u(\eta)\ge f(\xi)
  \qquad\text{for every }\xi\in\partial\Theta.
\end{equation*}
The upper Perron solution of $f$ is
\begin{equation*}
  \upperH_\Theta f(\xi)=\inf_{u\in\mathcal U_f}u(\xi),
  \qquad \xi\in\Theta.
\end{equation*}
Similarly, let the lower class $\mathcal L_f$ consist of the
$p$-subparabolic functions $v$ in $\Theta$, bounded from above, such that
\begin{equation*}
  \limsup_{\Theta\ni\eta\to\xi}v(\eta)\le f(\xi)
  \qquad\text{for every }\xi\in\partial\Theta.
\end{equation*}
The lower Perron solution of $f$ is
\begin{equation*}
  \lowerH_\Theta f(\xi)=\sup_{v\in\mathcal L_f}v(\xi),
  \qquad \xi\in\Theta.
\end{equation*}
Moreover,
\begin{equation}\label{eq:upper-lower}
  \lowerH_\Theta f=-\upperH_\Theta(-f).
\end{equation}
A boundary point $\xi_0\in\partial\Theta$ is regular with respect to
$\Theta$ if
\begin{equation*}
  \lim_{\Theta\ni\xi\to\xi_0}\upperH_\Theta f(\xi)=f(\xi_0)
\end{equation*}
for every $f\in C(\partial\Theta)$.
By \eqref{eq:upper-lower}, lower Perron solutions give the same definition.

A positive $p$-superparabolic function $w$ in $\Theta$ is a
traditional barrier in $\Theta$ at $\xi_0\in\partial\Theta$ if
\begin{equation*}
  \lim_{\Theta\ni\zeta\to\xi_0}w(\zeta)=0
\end{equation*}
and
\begin{equation*}
  \liminf_{\Theta\ni\zeta\to\xi}w(\zeta)>0
  \qquad\text{for every }\xi\in\partial\Theta\setminus\{\xi_0\}.
\end{equation*}
We use the barrier-family notion from \cite[Definition~3.2]{BBGP15}.
\begin{definition}[Barrier family]\label{def:barrier-family}
  Let $\xi_0\in\partial\Theta$.  A family of functions
  $w_j:\Theta\to(0,\infty]$, $j=1,2,\ldots$, is a barrier family in
  $\Theta$ at $\xi_0$ if
  \begin{enumerate}[label=(\arabic*)]
    \item $w_j$ is a positive $p$-superparabolic function in $\Theta$ for
          every $j$;
    \item
          \begin{equation*}
            \lim_{\Theta\ni\zeta\to\xi_0}w_j(\zeta)=0
            \qquad\text{for every }j;
          \end{equation*}
    \item for each integer $k\ge1$, there is a $j$ such that
          \begin{equation}\label{eq:barrier-family}
            \liminf_{\Theta\ni\zeta\to\xi}w_j(\zeta)\ge k
            \quad\text{for all }\xi\in\partial\Theta
            \text{ with }|\xi-\xi_0|\ge\frac1k.
          \end{equation}
  \end{enumerate}
\end{definition}
By \cite[Theorem~3.3]{BBGP15}, a boundary point is regular if and only if it
has a barrier family.

We use the following parabolic comparison principle
\cite[Theorem~2.4]{BBGP15}.

\begin{proposition}[Parabolic comparison principle]\label{prop:comparison}
  Let $u$ be $p$-superparabolic and $v$ be $p$-subparabolic in a
  bounded open set $\Theta$.  Let $T\in\R$, and assume that
  \begin{equation*}
    +\infty\ne\limsup_{\Theta\ni(y,s)\to(x,t)}v(y,s)
    \le \liminf_{\Theta\ni(y,s)\to(x,t)}u(y,s)\ne-\infty
  \end{equation*}
  at every $(x,t)\in\partial\Theta$ with $t<T$.  Then
  \begin{equation*}
    v\le u\qquad\text{in }\Theta\cap\{t<T\}.
  \end{equation*}
\end{proposition}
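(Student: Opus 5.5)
The plan is the standard reduction of the global statement to comparison on space-time boxes, which is essentially built into \cref{def:superparabolic}, followed by an exhaustion argument. The one genuinely nonlinear ingredient I would take from the literature (Kilpel\"ainen--Lindqvist~\cite{KL96}, Korte--Kuusi--Parviainen~\cite{KKP10}) is \emph{box comparison}: if $u$ is $p$-superparabolic and $v$ is $p$-subparabolic in a neighbourhood of $\overline{Q_{t_1,t_2}}$, and
\begin{equation*}
  \limsup v\le\liminf u \quad\text{on }\partial_pQ_{t_1,t_2},
\end{equation*}
then $v\le u$ in $Q_{t_1,t_2}$. I would prove this by inserting a $p$-parabolic function between them. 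Approximate $v$ from above by a decreasing sequence of continuous functions on $\partial_pQ_{t_1,t_2}$. Solve the Dirichlet problem in the box with these data; boxes are regular for all $p$, since lateral points satisfy the exterior cone condition and bottom points are trivially regular. The resulting solutions $h$ are continuous on $\overline{Q_{t_1,t_2}}$. Then $h\le u$ by \cref{def:superparabolic}(iii), and $v\le h$ by the same definition applied to $-v$ and $-h$. Passing to the limit gives $v\le u$. The finiteness hypotheses $\limsup v\ne+\infty$ and $\liminf u\ne-\infty$ are what allow finite continuous data to be squeezed in. On the lateral boundary, lower semicontinuity of $u$ is used through a Dini-type argument.

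For the global statement, fix $\varepsilon>0$ and $T'<T$, and consider the set
\begin{equation*}
  K=\{(x,t)\in\Theta: t\le T',\ v(x,t)\ge u(x,t)+\varepsilon\}.
\end{equation*}
The boundary hypothesis, together with upper semicontinuity of $v-u$ and compactness of $\partial\Theta\cap\{t\le T'\}$, shows that $K$ stays a positive distance away from $\partial\Theta$. Hence $K$ is a compact subset of $\Theta$. Here I must handle carefully the points where $u=+\infty$, using the convention that $K$ is defined only where $u$ is finite. If $K$ is empty we are done, since $\varepsilon$ and $T'$ are arbitrary. Otherwise, let $t_0$ be the smallest time coordinate of points in $K$; it is attained because $K$ is closed and bounded. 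Pick $(x_0,t_0)\in K$ and a small box $Q_{t_0-\delta,t_0}$ around it, compactly contained in $\Theta$.

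On the bottom $\overline Q\times\{t_0-\delta\}$ there are no points of $K$, so $v<u+\varepsilon$ there. On the lateral part the same holds for times below $t_0$, while points at time $t_0$ need not avoid $K$. To handle this I would use an induction in time instead of a single box. Cover $\{t\le T'\}\cap\overline{\{\text{dist to }\partial\Theta\ge r\}}$ by a grid of small boxes, where $r$ comes from the boundary step. Then propagate the inequality $v\le u+\varepsilon$ box by box in increasing time, applying box comparison to $u+\varepsilon$, which is still $p$-superparabolic since constants can be added. For each box, its parabolic boundary lies either in the region near $\partial\Theta$, where the inequality holds by the choice of $r$, or in earlier boxes that have already been treated. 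Letting $\varepsilon\to0$ and $T'\to T$ finishes the argument.

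I expect the main obstacle to be the box comparison step. The data of $v$ and $u$ are only semicontinuous and possibly unbounded inside, and justifying the Dirichlet problem with the limiting continuous data needs stability of $p$-parabolic functions under monotone limits. That stability is exactly where the nonlinear potential theory of~\cite{KL96,KKP10} is invoked rather than reproved. The grid-propagation bookkeeping is routine but must respect that parabolic boundaries only see the past. This is why the hypothesis is needed only for $t<T$.
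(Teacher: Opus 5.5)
The one step that fails is the grid propagation from boxes to a general $\Theta$; your box-comparison lemma is sound. It is also simpler than you suggest. With the $\varepsilon$-margin, the Dini argument gives a single continuous $\phi$ with $v\le\phi<u+\varepsilon$ on $\partial_pQ_{t_1,t_2}$. Applying (iii) of \cref{def:superparabolic} to $u+\varepsilon$, and to $-v$, against the $p$-parabolic $h$ with data $\phi$ gives $v\le u+\varepsilon$ in the box. No stability under monotone limits is needed.

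The grid fails because the lateral part $\partial Q\times(t_1,t_2]$ of a box's parabolic boundary lies at the \emph{same} times as the box. For an interior grid box it sits in neighbouring boxes of the same time slab, which are neither near $\partial\Theta$ nor earlier. So your claim that every parabolic boundary lies in the near-boundary region or in already treated boxes is false, and no choice or ordering of boxes can fix it. Let $n\ge2$ and $\Theta=(B_3\setminus\overline{B_1})\times(-1,1)$. Nothing excludes a priori that $K$ has slices $\{1.5\le|x|\le2.5\}$ for $t$ in some interval. A box $Q\times(s_1,s_2)\Subset\Theta$ whose comparison reaches a point of such a slice at time $t$ needs $Q\cap K_t\neq\emptyset$ and $\partial Q\cap K_t=\emptyset$. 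Connectedness of $K_t$ then forces $K_t\subset Q$, and convexity puts the hole inside $Q$, which is impossible. There is also a secondary defect. Condition (iii) gives the inequality only in the open box, not on the closed faces you later use as parabolic boundaries, and semicontinuity points the wrong way there ($v$ may jump up and $u$ down). Overlapping the boxes in time fixes this for bottoms, but not for lateral faces.

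What you need is comparison, against continuous $p$-parabolic $h$, on cylinders $D\times(t_1,t_2)$ whose base $D$ is not a box. A finite union of cubes suffices, since its lateral points satisfy an exterior cone condition. With that, your time-slicing induction works: use one such cylinder per thin slab, overlapping in time. Getting this from the box definition is not bookkeeping; it is the real content of the proposition and needs the nonlinear potential theory of \cite{KL96,KKP10}. For instance, truncations $\min\{u,M\}$ of $p$-superparabolic functions are weak supersolutions, so an energy argument gives $h\le u+\varepsilon$ a.e.\ in the cylinder. The representation $u(x,t)=\operatorname{ess\,lim\,inf}_{(y,s)\to(x,t),\,s<t}u(y,s)$, together with continuity of $h$, then gives the inequality everywhere. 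The paper does not prove the proposition; it quotes it as \cite[Theorem~2.4]{BBGP15}, which is built on \cite{KL96,KKP10}.
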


In \cref{sec:irregularity} we use this with $T=0$ to compare a lower Perron competitor with a supersolution without requiring an inequality on the top $t=0$.

We also use the pasting lemma \cite[Lemma~2.9]{BBGP15}.  If $G\subset
  \Theta$ is open, $u$ is $p$-superparabolic in $\Theta$, and $v$
is $p$-superparabolic in $G$, then
\begin{equation*}
  w=\begin{cases}
    \min\{u,v\}, & \text{in }G,                 \\
    u,           & \text{in }\Theta\setminus G,
  \end{cases}
\end{equation*}
is $p$-superparabolic whenever it is lower semicontinuous.  All local
supersolutions pasted below are continuous across the pasting interface.
Continuous weak supersolutions are $p$-superparabolic by the comparison
principle for weak subsolutions and supersolutions; see
\cite[Lemma~3.1]{KL96} and \cite[Lemma~3.5]{KKP10}.

Finally, for a radial function
$u(x)=V(|x|)$, set $r=|x|$.  Whenever $V'(r)\ne0$, we have
\begin{equation}\label{eq:radial-formula}
  \Deltap u(x)
  =|V'(r)|^{p-2}
  \left((p-1)V''(r)+\frac{n-1}{r}V'(r)\right).
\end{equation}
This equality will be used throughout the paper as all barriers are rotationally symmetric.

\section{A barrier at an irregular point}\label{sec:single-barrier}

This section shows that the origin can have a traditional barrier and still be irregular. We first prove irregularity by comparison, then construct the barrier at the same point.

\subsection{Irregularity when $l<p$, $1<p<n$ }\label{sec:irregularity}

\begin{theorem}[Irregularity when $l<p$]\label{thm:irregularity-below}
  Let $n\ge2$, $1<p<n$, $p\ne2$, and $0<l<p$.  Then the origin is
  irregular with respect to $\Theta_{l,\theta}$ for every $\theta>0$.
\end{theorem}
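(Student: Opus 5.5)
The plan is to build an explicit smooth supersolution $u$ of the form $\Phi+g(t)$, with $\Phi$ the function from the introduction (following \cite{BB26}). It should dominate suitable continuous boundary data on $\partial\Theta_{l,\theta}\cap\{t<0\}$ but tend to $0$ along the curve $-t=|x|^p$. Then \cref{prop:comparison} with $T=0$ bounds every member of the lower class, and hence $\lowerH f$, by $u$, so $\lowerH f$ cannot tend to $f(0,0)>0$.

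\emph{Step 1: the profile.} I fix $0<\gamma<(n-p)/(p-1)$; this is where $p<n$ is used. Write $s=-t$, $r=|x|$, $z=s^{1/l}/r$ and $\Phi=z^\gamma=s^{\gamma/l}r^{-\gamma}$. By \eqref{eq:radial-formula}, $V(r)=r^{-\gamma}$ satisfies
\begin{equation*}
  (p-1)V''+\tfrac{n-1}{r}V'=\gamma r^{-\gamma-2}\bigl((p-1)(\gamma+1)-(n-1)\bigr)<0 .
\end{equation*}
Hence
\begin{equation*}
  \Deltap\Phi=-c_0\,s^{(p-1)\gamma/l}r^{-(\gamma+1)(p-1)-1}=-c_0\,z^{\gamma(p-1)+p}s^{-p/l},
  \qquad c_0>0 .
\end{equation*}
On the other hand $\partial_t\Phi=-(\gamma/l)s^{-1}z^\gamma<0$, so $\Phi$ alone is not a supersolution. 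Adding a function of $t$ leaves the gradient, and thus $\Deltap$, unchanged. I therefore need $g'(t)\ge\sup_z\bigl(A-B\bigr)$, where $A=(\gamma/l)s^{-1}z^\gamma$ and $B=c_0z^{\gamma(p-1)+p}s^{-p/l}$. If $p<2$, I also shrink $\gamma$ so that $\gamma(p-2)+p>0$.

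\emph{Step 2: the key estimate, which I expect to be the main obstacle.} The ratio satisfies $A/B\sim z^{-(\gamma(p-2)+p)}s^{p/l-1}$. So $A>B$ forces $z\le Cs^{(p/l-1)/(\gamma(p-2)+p)}$, and in that regime
\begin{equation*}
  A\le Cs^{-1+\kappa},\qquad \kappa=\frac{\gamma(p/l-1)}{\gamma(p-2)+p}>0,
\end{equation*}
where $l<p$ is used. Hence $g(t)=-K(-t)^\kappa$ with $K$ large works: $g'=K\kappa s^{\kappa-1}$ dominates the bad part of $A-B$, and $g$ is integrable and continuous with $g(0)=0$. This makes $u=\Phi-K(-t)^\kappa$ a smooth function with $\nabla u\ne0$ and $\partial_tu-\Deltap u\ge0$ pointwise in $\Theta_{l,\theta}$. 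It is therefore a continuous weak supersolution, hence $p$-superparabolic. I will double-check the exponent bookkeeping and the sign of $(p-2)\gamma+p$ in the singular case.

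\emph{Step 3: data and comparison.} By scaling \cite[Proposition~4.1]{BBG17}, and by shrinking to a small neighbourhood if needed, I can assume $K s^\kappa$ is small on the relevant part of the boundary. On the initial boundary $z=\theta^{1/l}$, so $\Phi=\theta^{\gamma/l}$ there. I take $f(x,t)=c(1-2|x|)_+$ with $0<c<\theta^{\gamma/l}-K\theta^\kappa$, where $c>0$ after normalizing $\theta$ or localizing. Then $f\le u$ at every boundary point with $t<0$: on the initial boundary by the choice of $c$, and on the lateral part $|x|=1$ because $f=0$ and $u$ stays bounded there, after adding a small constant if necessary. \Cref{prop:comparison} with $T=0$ gives $v\le u$ in $\Theta_{l,\theta}$ for every $v\in\mathcal L_f$, hence $\lowerH f\le u$.

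\emph{Step 4: conclusion.} Along the points with $-t=|x|^p\to0$, which lie in the domain since $l<p$, we have $z=r^{p/l-1}\to0$, so $u\to0$. Thus $\liminf\lowerH f\le0<c=f(0,0)$. By \eqref{eq:upper-lower} the origin is irregular.
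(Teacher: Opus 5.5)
Your construction is the paper's. Your constant $c_0$ is the paper's $A_\gamma$, your two conditions on $\gamma$ are $A_\gamma>0$ and $\nu=p-1+p/\gamma>1$, and your $\kappa=\gamma(p/l-1)/(\gamma(p-2)+p)$ equals the paper's $\delta=(p/l-1)/(\nu-1)$. Your split into $A>B$ and $A\le B$ is equivalent to the paper's substitution $y=(-t)^{-\delta}\Phi$. Steps 1 and 2 are correct.

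The gap is in Step 3, on the lateral boundary $|x|=1$. There $u=s^{\gamma/l}-Ks^{\kappa}$, and $\kappa<\gamma/l$ whenever $l+\gamma(p-2)>0$. This always holds for $p>2$, and for $p<2$ once $\gamma<l/(2-p)$. So $u<0=f$ near the corner $|x|=1$, $t\to0^-$, which is a boundary point with $t<0$ where \cref{prop:comparison} needs $f\le u$. In the paper's figure example ($p=3$, $l=\gamma=1$, $\kappa=1/2$, $K=2$), $u=s-2\sqrt s<0$ on the whole lateral boundary. Boundedness of $u$ does not supply the hypothesis. Localizing to $\{|x|<\epsilon\}$ only moves the same sign problem to $|x|=\epsilon$.

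The constant you must add is $m\ge\sup_{0<s<\theta}(Ks^\kappa-s^{\gamma/l})>0$. It is not small, since it grows with $K$. After adding it, Step 4 only gives $\limsup\lowerH f\le m$, so you also need $m<c$, which you never arrange. With $\theta=1$, your requirement $c<1-K$ also fails as soon as $K\ge1$.

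The argument can be repaired. $K$ depends only on $n,p,l,\gamma$, not on $\theta$. The supremum above is independent of $\theta$ once $\theta$ is large, while $\theta^{\gamma/l}-K\theta^\kappa\to\infty$. So take $\theta$ large, choose $m<c<\theta^{\gamma/l}-K\theta^\kappa$, and compare with $u+m$.

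The paper avoids the sign bookkeeping entirely. It sets $\theta=1$, takes $f:=u$ on $\partial\Theta_{l,1}\cap\{t<0\}$, and takes $f:=1-|x|$ on the top. This $f$ is continuous: $u\to1$ along the initial boundary at the origin, and $u\to0$ at the corner $|x|=1$, $t=0$. Then \cref{prop:comparison} with $T=0$ applies with no positivity of $u$ required. Finally $f(0,0)=1$, while $u\to0$ along $-t=|x|^p$.
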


Consider $\theta=1$ and the profile
\begin{equation*}
  \Phi(x,t)=\left(\frac{(-t)^{\nicefrac{1}{l}}}{|x|}\right)^\gamma,
  \qquad \gamma>0,
\end{equation*}
which satisfies $0<\Phi<1$ in the domain. For $1<p<2$ and $l<p$ with
$l<(n-p)(2-p)/(p-1)$, a multiple of this profile is a supersolution
when $\gamma=l/(2-p)$, as in~\cite{BB26}. To cover all $l<p$, we
choose $\gamma$ small. Then $\Phi_t-\Deltap\Phi<0$ near $t=0$ at fixed
$x$, so we use
\begin{equation*}
  u(x,t)=\Phi(x,t)-L(-t)^\delta,
  \qquad \delta,L>0.
\end{equation*}
The correction leaves $\Deltap\Phi$ unchanged and adds
$L\delta(-t)^{\delta-1}$ to the time derivative. We choose $\delta$ to
balance the time powers in $u_t-\Deltap u$, and then choose $L$ large
enough to make this expression nonnegative. The correction still
vanishes as $t\uparrow0$.
\Cref{fig:irregularity-comparison} illustrates $u$ and its limits along
two approaches to the origin.

\begin{figure}[!htbp]
  \centering
  \begin{tikzpicture}
  \definecolor{boundaryblue}{RGB}{0,80,150}
  \definecolor{pathorange}{RGB}{180,75,0}
  \begin{axis}[
    name=surface, width=8.2cm, height=6.4cm,
    view={35}{25},
    xlabel={$t$}, zlabel={$u$},
    xmin=-0.25, xmax=0, ymin=0, ymax=0.25, zmin=-0.3, zmax=1.08,
    xtick={-0.2,-0.1,0}, ytick={0,0.1,0.2}, ztick={0,0.5,1},
    tick label style={font=\footnotesize},
    label style={font=\small},
    title={\small The comparison function near the origin},
    axis lines=box, grid=major, grid style={black!8},
    colormap={surfacegray}{color(0cm)=(black!6); color(1cm)=(black!22)},
    clip=false,
  ]
    \addplot3[surf, shader=interp, domain=0:1, y domain=0:1,
      samples=45, samples y=31, z buffer=sort]
      ({-(0.0004+0.2496*x^2)*y}, {0.0004+0.2496*x^2},
       {y-2*sqrt((0.0004+0.2496*x^2)*y)});
    \addplot3[boundaryblue, very thick, domain=0:0.5, samples=101,
      samples y=0] ({-x^2},{x^2},{1-2*x});
    \addplot3[pathorange, very thick, dashed, domain=0:0.5, samples=101,
      samples y=0] ({-x^6},{x^2},{x^4-2*x^3});
    \addplot3[only marks, mark=o, mark size=2.5pt,
      mark options={solid,fill=white,draw=boundaryblue,thick}]
      coordinates {(0,0,1)};
    \addplot3[only marks, mark=o, mark size=2.5pt,
      mark options={solid,fill=white,draw=pathorange,thick}]
      coordinates {(0,0,0)};
    \node[font=\small, sloped like y axis, yshift=-20pt]
      at (axis cs:0,0.125,-0.3) {$r=|x|$};
  \end{axis}
  \begin{axis}[
    at={(8.7cm,0.55cm)}, anchor=south west,
    width=6.2cm, height=5.45cm,
    xlabel={$r$}, ylabel={$u$},
    xmin=0, xmax=0.25, ymin=-0.3, ymax=1.08,
    xtick={0,0.1,0.2}, ytick={0,0.5,1},
    tick label style={font=\footnotesize}, label style={font=\small},
    title={\small Values along the two approaches},
    axis lines=left, grid=major, grid style={black!8},
    legend style={font=\footnotesize,draw=none,fill=none,
      at={(0.98,0.98)},anchor=north east,cells={anchor=west}},
    clip=false,
  ]
    \addplot[boundaryblue, very thick, domain=0:0.5, samples=101]
      ({x^2},{1-2*x});
    \addlegendentry{$t=-r$: $1-2\sqrt r$}
    \addplot[pathorange, very thick, dashed, domain=0:0.5, samples=101]
      ({x^2},{x^4-2*x^3});
    \addlegendentry{$t=-r^3$: $r^2-2r^{3/2}$}
    \addplot[only marks, mark=o, mark size=2.5pt,
      mark options={solid,fill=white,draw=boundaryblue,thick},forget plot]
      coordinates {(0,1)};
    \addplot[only marks, mark=o, mark size=2.5pt,
      mark options={solid,fill=white,draw=pathorange,thick},forget plot]
      coordinates {(0,0)};
  \end{axis}
\end{tikzpicture}
  \caption{The comparison function in the proof of
    \cref{thm:irregularity-below}, with $n=6$, $p=3$, $l=\gamma=1$,
    $\delta=1/2$, and $L=2$. Writing $r=|x|$, we have
    $u(x,t)=(-t)/r-2\sqrt{-t}$ on $0<-t<r<1$.
    The solid curve is its trace on the initial boundary $t=-r$.
    The dashed curve follows the interior path $t=-r^3$.
    The open circles mark the respective limits $1$ and $0$ as
    $(x,t)\to(0,0)$. Since $\lowerH_{\Theta_{1,1}}f\le u$
    and $f(0,0)=1$, the interior path proves irregularity.}
  \label{fig:irregularity-comparison}
\end{figure}

\begin{proof}
  Here and later, we may assume $\theta=1$ by
  \cite[Lemma~3.6]{BB26}.  Since $p<n$, we can choose $\gamma>0$
  small enough that
  \begin{equation*}
    A_\gamma:=\gamma^{p-1}\bigl(n-p-(p-1)\gamma\bigr)>0,
    \qquad
    \nu:=p-1+\frac p\gamma>1.
  \end{equation*}
  With this choice of $\gamma$, let $\Phi$ and $u$ be as above,
  with $\delta,L>0$ to be chosen below.
  Since $x\ne0$ and $\nabla\Phi(x,t)\ne0$ in $\Theta_{l,1}$,
  \eqref{eq:radial-formula} gives
  \begin{align*}
    \Deltap u(x,t)=\Deltap\Phi(x,t)
     & =-A_\gamma(-t)^{\frac{\gamma(p-1)}{l}}|x|^{-\gamma(p-1)-p} \\
     & =-A_\gamma(-t)^{-\nicefrac{p}{l}}\Phi(x,t)^\nu.
  \end{align*}
  Differentiating with respect to $t$, we obtain
  \begin{equation*}
    u_t-\Deltap u
    =L\delta(-t)^{\delta-1}
    -\frac{\gamma}{l}(-t)^{-1}\Phi
    +A_\gamma(-t)^{-\nicefrac{p}{l}}\Phi^\nu.
  \end{equation*}
  Choose
  \begin{equation*}
    \delta=\frac{p/l-1}{\nu-1}>0,
    \qquad\text{so that}\qquad
    1-\delta-\frac pl=-\delta\nu,
  \end{equation*}
  and put $y=(-t)^{-\delta}\Phi(x,t)>0$.  Then
  \begin{equation}\label{eq:irregularity-residual}
    u_t(x,t)-\Deltap u(x,t)
    =(-t)^{\delta-1}
    \left(L\delta-\frac{\gamma}{l}y+A_\gamma y^\nu\right).
  \end{equation}
  Since $A_\gamma>0$ and $\nu>1$, the function
  $y\mapsto A_\gamma y^\nu-(\gamma/l)y$ is bounded from below on
  $[0,\infty)$.  Thus, for sufficiently large $L>0$,
  \eqref{eq:irregularity-residual} gives $u_t-\Deltap u\ge0$ in
  $\Theta_{l,1}$.  As $u$ is smooth and $\nabla u=\nabla\Phi\ne0$
  there, it is a continuous weak supersolution and hence $p$-superparabolic.

  Let $u$ also denote its continuous boundary trace on
  $\partial\Theta_{l,1}\cap\{t<0\}$, and define
  $f:\partial\Theta_{l,1}\to\R$ by
  \begin{equation*}
    f(x,t)=
    \begin{cases}
      u(x,t), & (x,t)\in\partial\Theta_{l,1},\ t<0, \\[1mm]
      1-|x|,  & (x,t)\in\partial\Theta_{l,1},\ t=0.
    \end{cases}
  \end{equation*}
  Along the initial boundary,
  \begin{equation*}
    u(x,-|x|^l)=1-L|x|^{l\delta}\to 1=f(0,0)
    \qquad\text{as }x\to0,
  \end{equation*}
  while along the lateral boundary,
  \begin{equation*}
    u(x,t)=(-t)^{\nicefrac{\gamma}{l}}-L(-t)^\delta\to 0=1-|x|
    \qquad\text{as }t\to0^-\text{ with }|x|=1.
  \end{equation*}
  Hence $f\in C(\partial\Theta_{l,1})$ and $f(0,0)=1$.
  At every boundary point with $t<0$, the boundary limit of $u$ equals $f$.
  If $v\in\mathcal L_f$, then
  \cref{prop:comparison}, with terminal time $T=0$, gives $v\le u$.
  Taking the supremum over the lower class yields
  \begin{equation}
    \label{eq:comp}
    \lowerH_{\Theta_{l,1}}f\le u.
  \end{equation}

  Along the path $((-t)^{\nicefrac{1}{p}}e_1,t)$, for $t \in (-1,0)$, we have
  $((-t)^{\nicefrac{1}{p}}e_1,t)\in\Theta_{l,1}$ because $l<p$, and
  \begin{equation*}
    u((-t)^{\nicefrac{1}{p}}e_1,t)
    =(-t)^{\gamma(\nicefrac{1}{l}-\nicefrac{1}{p})}-L(-t)^\delta\to 0
    \qquad\text{as }t\to0^-.
  \end{equation*}
  Combining this with \eqref{eq:comp} gives
  \begin{equation*}
    \limsup_{t\to0^-}
    \lowerH_{\Theta_{l,1}}f((-t)^{\nicefrac{1}{p}}e_1,t)
    \le0<f(0,0).
  \end{equation*}
  Thus the lower Perron solution does not attain the prescribed boundary
  value at the origin, so the origin is irregular.
\end{proof}

\begin{remark}
  The new cases in \cref{thm:irregularity-below}, beyond
  \cite[Proposition~1.4]{BB26}, are $p_*<p<2$ with
  $(n-p)(2-p)/(p-1)\le l<p$ and $2<p<n$ with $0<l<p$.
\end{remark}

\subsection{One traditional barrier for every $0<l$, $2<p$}\label{sec:traditional-barrier}

We recall that the idea of the proof is given in Section~\ref{sec:idea}.

\begin{proof}[Proof of \cref{thm:traditional-barrier}]
  Choose $0<\eta<1$ and
  \begin{equation*}
    \alpha>\max\left\{l,\frac{p-l}{p-2}\right\}.
  \end{equation*}
  For $C>0$ to be chosen below, set
  \begin{equation*}
    w(x,t)=C|x|^\alpha\left(1-\eta\frac{-t}{|x|^l}\right).
  \end{equation*}
  Since $0<(-t)/|x|^l<1$ in $\Theta_{l,1}$,
  \begin{equation}\label{eq:traditional-two-sided}
    C(1-\eta)|x|^\alpha<w(x,t)<C|x|^\alpha.
  \end{equation}

  Write $r=|x|$ and $y=(-t)/r^l$. Differentiating gives
  \begin{equation*}
    w_r=Cr^{\alpha-1}\bigl[\alpha-\eta(\alpha-l)y\bigr],
    \qquad
    w_{rr}=Cr^{\alpha-2}
    \bigl[\alpha(\alpha-1)-\eta(\alpha-l)(\alpha-l-1)y\bigr].
  \end{equation*}
  Since $0<y<1$ and $\alpha>l$, these formulas imply
  $0<w_r\le\alpha Cr^{\alpha-1}$ and
  $|w_{rr}|\le KCr^{\alpha-2}$, where $K$ depends only on
  $\alpha,l,\eta$.  By \eqref{eq:radial-formula},
  \begin{equation*}
    \Deltap w
    =w_r^{p-2}\left((p-1)w_{rr}+\frac{n-1}{r}w_r\right)
    \le MC^{p-1}r^{\alpha(p-1)-p},
  \end{equation*}
  where $M>0$ depends only on $n,p,l,\alpha,\eta$.
  Since $w_t=C\eta r^{\alpha-l}$, it follows that
  \begin{equation*}
    w_t-\Deltap w
    \ge Cr^{\alpha-l}
    \left[\eta-MC^{p-2}r^{\alpha(p-2)+l-p}\right].
  \end{equation*}
  Our choice of $\alpha$ gives $\alpha(p-2)+l-p>0$.
  Thus, since $0<r<1$ and $p>2$, choosing $C>0$ sufficiently small
  that $MC^{p-2}\le\eta$ yields $w_t-\Deltap w\ge0$ in $\Theta_{l,1}$.

  Hence $w$ is a continuous weak supersolution and therefore
  $p$-superparabolic.  The bounds in \eqref{eq:traditional-two-sided}
  show that $w$ tends to zero at the origin.  Every other boundary
  point $(x_0,t_0)$ has $|x_0|>0$, and the lower bound gives a strictly
  positive boundary limit there.  Hence $w$ is a traditional barrier
  at the origin.
\end{proof}

\begin{proof}[Proof of \cref{cor:one-barrier}]
  In the stated range, \cref{thm:irregularity-below} shows that the origin is
  irregular, while \cref{thm:traditional-barrier} provides a traditional
  barrier there.
\end{proof}

\begin{proof}[Proof of \cref{cor:north-pole}]
  Take $l=2$ and $\theta=1$, and write $\Theta=\Theta_{2,1}$.  Consider the
  Euclidean ball in $\R^{n+1}$ with centre $(0,-1/2)$ and radius $1/2$.
  Every point in its closure satisfies
  \begin{equation*}
    |x|^2+(t+1/2)^2\le1/4,
  \end{equation*}
  and hence $-t\ge |x|^2+t^2\ge |x|^2$.  The closed ball is therefore
  contained in the complement of $\Theta$, and the origin is its north
  pole.  Since $2<p$, \cref{cor:one-barrier} shows that the origin is
  irregular while it admits a traditional barrier.
\end{proof}

\begin{remark}\label{rem:exact-degenerate-barrier}
  In the range $p/(p-1)<l<p$, $p>2$, the exact $p$-parabolic function
  \begin{equation*}
    \frac{\kappa(p-1)}p|x|^{\frac{p}{p-1}}+n\kappa^{p-1}t
  \end{equation*}
  is another traditional barrier when $\kappa>0$ is sufficiently small.

\end{remark}

\section{Regularity when $p>2$}\label{sec:degenerate}

\subsection{Regularity for $l=p$, $2<p\le n$}

When $p>2$, multiplying the preceding barrier by a large constant need
not give a supersolution.  At $l=p$, we instead vary its radial exponent.

\begin{theorem}[Regularity at the endpoint $l=p$]\label{thm:degenerate-endpoint}
  Let $2<p\le n$ and $\theta>0$.  Then the origin is regular with respect
  to $\Theta_{p,\theta}$.
\end{theorem}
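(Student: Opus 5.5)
We will work with $\theta=1$, which is allowed by \cite[Lemma~3.6]{BB26}. We will build a barrier family out of the profiles from \cref{sec:idea}:
\begin{equation*}
  w_\alpha(x,t)=\frac{C}{\alpha}\left(|x|^\alpha-\eta(-t)^{\nicefrac{\alpha}{p}}\right),
  \qquad 0<\eta<1,\quad 0<\alpha<1.
\end{equation*}
The constant $C>0$ will be chosen independent of $\alpha$. The family will then have arbitrarily large boundary values because $C/\alpha\to\infty$ as $\alpha\downarrow0$, and this replaces multiplication by large constants, which is not available when $p>2$. Regularity then follows from \cite[Theorem~3.3]{BBGP15}.

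\textbf{Supersolution check.} Write $r=|x|$ and $s=-t$, so that $0<s<r^p<1$ in $\Theta_{p,1}$. The time-dependent term does not depend on $x$. Hence $\nabla w_\alpha=Cr^{\alpha-2}x\ne0$, and by \eqref{eq:radial-formula}
\begin{equation*}
  \Deltap w_\alpha=C^{p-1}\bigl(n-p+(p-1)\alpha\bigr)r^{\alpha(p-1)-p}\ge0 .
\end{equation*}
This term is nonnegative because $p\le n$, and it is bounded by $K C^{p-1}r^{\alpha(p-1)-p}$ with $K=n$, uniformly for $\alpha<1$. The time derivative is
\begin{equation*}
  \partial_t w_\alpha=\frac{C\eta}{p}s^{\nicefrac{\alpha}{p}-1}.
\end{equation*}
Since $\alpha<p$ and $s<r^p$, we have $s^{\nicefrac{\alpha}{p}-1}>r^{\alpha-p}$. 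Therefore
\begin{equation*}
  \partial_t w_\alpha-\Deltap w_\alpha\ge Cr^{\alpha-p}\left(\frac{\eta}{p}-KC^{p-2}r^{\alpha(p-2)}\right)\ge0,
\end{equation*}
provided $C^{p-2}\le\eta/(pK)$. Here we use $r<1$ and $p>2$. This is the one place where the comparison of time powers is needed, and it is the key step: the $l=p$ scaling makes the two powers of $r$ match up to $r^{\alpha(p-2)}\le1$. That is why a single $\alpha$-independent choice of $C$ works. Since $w_\alpha$ is smooth with nonvanishing gradient in $\Theta_{p,1}$, it is a continuous weak supersolution and hence $p$-superparabolic.

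\textbf{Barrier family properties.} In $\Theta_{p,1}$ we have $\eta s^{\nicefrac{\alpha}{p}}<\eta r^\alpha$, which gives
\begin{equation*}
  \frac{C(1-\eta)}{\alpha}r^\alpha<w_\alpha<\frac{C}{\alpha}r^\alpha .
\end{equation*}
So $w_\alpha$ is positive and tends to $0$ at the origin. Now fix $k$ and take a boundary point $\xi=(x_0,t_0)$ with $|\xi|\ge1/k$.
\begin{itemize}
  \item On the top and lateral parts of the boundary, and on the initial boundary $t=-|x|^p$, we have $|\xi|^2\le r_0^2+r_0^{2p}\le 2r_0^2$, where $r_0=|x_0|$. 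Hence $r_0\ge 1/(\sqrt2 k)$.
  \item The lower bound above then gives
  \begin{equation*}
    \liminf_{\zeta\to\xi}w_\alpha(\zeta)\ge \frac{C(1-\eta)}{\alpha}\bigl(\sqrt2k\bigr)^{-\alpha}.
  \end{equation*}
  Since $(\sqrt2k)^{-\alpha}\to1$ as $\alpha\downarrow0$, this exceeds $k$ for $\alpha$ small enough.
\end{itemize}
Taking $\alpha=\alpha_j\downarrow0$ therefore yields a barrier family in the sense of \cref{def:barrier-family}, and the origin is regular.
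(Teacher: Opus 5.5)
Your proposal is correct and follows the paper's proof essentially line by line. It uses the same profile $w_\alpha$, the same radial computation of $\Deltap w_\alpha$, the same use of $l=p$ to get $s^{\alpha/p-1}>r^{\alpha-p}$, and the same boundary bound $|x_0|\ge 1/(\sqrt2 k)$ giving a barrier family as $\alpha\downarrow0$. The only cosmetic difference is that you fix one constant $C$ with $C^{p-2}\le\eta/(pn)$ for all $\alpha<1$, whereas the paper defines $C_\alpha$ by $C_\alpha^{p-2}\bigl(n-p+(p-1)\alpha\bigr)=\eta/p$ and then bounds it below uniformly; both yield $C/\alpha\to\infty$.
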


\begin{proof}
  By \cite[Lemma~3.6]{BB26}, it suffices to prove the claim for
  $\Theta_{p,1}$.  We start by constructing a $p$-superparabolic function
  in $\Theta_{p,1}$.  Let $\alpha>0$ and let $C_\alpha>0$.
  Fix $0<\eta<1$ independently of $\alpha$ and define
  \begin{equation*}
    w_\alpha(x,t)
    =\frac{C_\alpha}{\alpha}
    \left(|x|^\alpha-\eta(-t)^{\nicefrac{\alpha}{p}}\right).
  \end{equation*}
  Since $(-t)^{\nicefrac{\alpha}{p}}<|x|^\alpha$ in $\Theta_{p,1}$, the subtracted term
  is less than $\eta|x|^\alpha$.  Thus
  \begin{equation}\label{eq:endpoint-two-sided}
    (1-\eta)\frac{C_\alpha}{\alpha}|x|^\alpha
    <w_\alpha(x,t)
    <\frac{C_\alpha}{\alpha}|x|^\alpha.
  \end{equation}
  Hence, $w_\alpha$ is positive in $\Theta_{p,1}$ and tends to zero at the origin.

  Fix $t<0$ and set $r=|x|$.  With a slight
  abuse of notation, write $w_\alpha(r,t)$ for the radial profile.  Then
  \begin{equation*}
    (w_\alpha)_r(r,t)=C_\alpha r^{\alpha-1},
    \qquad
    (w_\alpha)_{rr}(r,t)=C_\alpha(\alpha-1)r^{\alpha-2}.
  \end{equation*}
  Consequently,
  \begin{equation*}
    (p-1)(w_\alpha)_{rr}(r,t)
    +\frac{n-1}{r}(w_\alpha)_r(r,t)
    =C_\alpha D_\alpha r^{\alpha-2},
    \qquad
    D_\alpha:=n-p+(p-1)\alpha>0,
  \end{equation*}
  where the positivity follows from $n\ge p$ and $\alpha>0$.
  The radial formula \eqref{eq:radial-formula} therefore gives
  \begin{equation*}
    \Deltap w_\alpha(x,t)
    =C_\alpha^{p-1}D_\alpha |x|^{\alpha(p-1)-p},
  \end{equation*}
  whereas
  \begin{equation*}
    (w_\alpha)_t(x,t)
    =\frac{\eta C_\alpha}{p}(-t)^{\nicefrac{\alpha}{p}-1}.
  \end{equation*}
  Hence
  \begin{align*}
    (w_\alpha)_t(x,t)-\Deltap w_\alpha(x,t)
    ={} & C_\alpha |x|^{\alpha-p}
    \Biggl[
    \frac{\eta}{p}
    \left(\frac{|x|^p}{-t}\right)^{1-\nicefrac{\alpha}{p}}
    -C_\alpha^{p-2}D_\alpha |x|^{\alpha(p-2)}
    \Biggr].
  \end{align*}
  By \eqref{eq:soda-domain}, $|x|^p/(-t)>1$ and $0<|x|<1$ in
  $\Theta_{p,1}$.  Thus, if we also require $\alpha<p$, then
  $(|x|^p/(-t))^{1-\nicefrac{\alpha}{p}}>1$ and $|x|^{\alpha(p-2)}<1$ since $p>2$.
  We balance the coefficients by choosing $C_\alpha$ such that
  \begin{equation*}
    C_\alpha^{p-2}D_\alpha=\frac{\eta}{p}.
  \end{equation*}
  Consequently,
  \begin{equation*}
    (w_\alpha)_t(x,t)-\Deltap w_\alpha(x,t)
    =\frac{\eta C_\alpha}{p}|x|^{\alpha-p}
    \left[
    \left(\frac{|x|^p}{-t}\right)^{1-\nicefrac{\alpha}{p}}
    -|x|^{\alpha(p-2)}
    \right]>0.
  \end{equation*}
  Moreover, $\nabla w_\alpha(x,t)=C_\alpha|x|^{\alpha-2}x\ne0$ in
  $\Theta_{p,1}$.  Thus the pointwise inequality makes $w_\alpha$ a
  continuous weak supersolution, and hence $p$-superparabolic, in
  $\Theta_{p,1}$.

  We now verify that varying $\alpha$ gives a barrier family in
  $\Theta_{p,1}$ at the origin; see \cref{def:barrier-family}.
  To do so, we first choose $0<\alpha_j<1$ with $\alpha_j\downarrow0$.  Now, since
  $D_\alpha\le n-1$ whenever $0<\alpha\le1$,
  \begin{equation*}
    C_\alpha\ge
    \left(\frac{\eta}{p(n-1)}\right)^{\frac{1}{p-2}}>0.
  \end{equation*}
  It follows, since the lower bound is a constant, that
  \begin{equation*}
    \frac{C_{\alpha_j}}{\alpha_j}\to \infty.
  \end{equation*}
  Fix an integer $k\ge1$.  On $\overline{\Theta}_{p,1}$,
  \begin{equation*}
    |\xi|^2=|x|^2+t^2\le |x|^2+|x|^{2p}\le2|x|^2,
    \qquad \xi=(x,t),
  \end{equation*}
  because $-t\le|x|^p$ and $|x|\le1$.  Therefore every
  $\xi\in\partial\Theta_{p,1}$ with $|\xi|\ge1/k$ satisfies
  \begin{equation*}
    |x|\ge\frac1{k\sqrt2}>0.
  \end{equation*}
  For every such boundary point, \eqref{eq:endpoint-two-sided} bounds the
  lower limit of $w_{\alpha_j}$ from below by
  \begin{equation*}
    (1-\eta)\frac{C_{\alpha_j}}{\alpha_j}
    \left(\frac1{k\sqrt2}\right)^{\alpha_j},
  \end{equation*}
  which tends to infinity because the power factor tends to one while
  $C_{\alpha_j}/\alpha_j\to\infty$.  Thus, for each $k$, we can choose
  $j=j(k)$ so that this lower bound is at least $k$.  Together with
  \eqref{eq:endpoint-two-sided}, this proves that
  $\{w_{\alpha_j}\}_{j=1}^\infty$ is a barrier family in $\Theta_{p,1}$
  at the origin in the sense of \Cref{def:barrier-family}.  The barrier-family
  characterization now implies that the origin is regular in
  $\Theta_{p,1}$.
\end{proof}

\subsection{Regularity for $0<l<p$ when $2<p=n$}\label{sec:critical-dimension}

\begin{theorem}[Regularity below the endpoint in the critical dimension]\label{thm:critical-dimension}
  Let $0<l<p$, $2<p=n$, and $\theta>0$.  Then the origin is regular with respect
  to $\Theta_{l,\theta}$.
\end{theorem}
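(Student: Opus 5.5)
We build a barrier family following the outline in \cref{sec:idea}, and then appeal to \cite[Theorem~3.3]{BBGP15}. By \cite[Lemma~3.6]{BB26} we may take $\theta=1$. Write $r=|x|$, $\rho=\log(1/r)$, $s=\log(1/(-t))$ and $y=l\rho/s$. In $\Theta_{l,1}$ we have $-t<r^l$, so $s>l\rho$ and $0<y<1$. For a small fixed $\mu>0$ and integers $j$ we consider the local profile
\begin{equation*}
  h_j=j\rho^{-\mu}g(y),\qquad g(y)=1-\frac y4-\frac{y^2}4 .
\end{equation*}
On $[0,1]$ we have $g\in[1/2,1]$, $g'<0$ and $g''=-1/2$.

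The time derivative is
\begin{equation*}
  \partial_t h_j=j\rho^{-\mu}g'(y)\,\partial_t y .
\end{equation*}
Since $\partial_t s=1/(-t)$, we get $\partial_t y=y/(s(-t))>0$, so $\partial_t h_j\ge c\,j\rho^{-\mu}y/(s(-t))$.

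For the spatial part, set $V(\rho)=h_j$ at fixed $t$. Because $n=p$, formula \eqref{eq:radial-formula} becomes
\begin{equation*}
  \Deltap h_j=(p-1)r^{-p}|V'|^{p-2}V''
\end{equation*}
in the variable $\rho$, since the first-order terms cancel. Compute
\begin{equation*}
  V'=j\bigl(-\mu\rho^{-\mu-1}g+\rho^{-\mu}g'\,l/s\bigr),
\end{equation*}
which is strictly negative. Thus $\nabla h_j\ne0$, and the pointwise computation is legitimate. Next,
\begin{equation*}
  V''=j\Bigl(\mu(\mu+1)\rho^{-\mu-2}g-2\mu\rho^{-\mu-1}g'\,\frac ls+\rho^{-\mu}g''\,\frac{l^2}{s^2}\Bigr).
\end{equation*}
Using $l/s=y/\rho$, this equals
\begin{equation*}
  j\rho^{-\mu-2}\bigl(\mu(\mu+1)g-2\mu y g'-\tfrac12y^2\bigr).
\end{equation*}
Hence $V''\le0$ once $y\ge y_0$ with $y_0\sim C\sqrt\mu$. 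Choosing $\mu$ small makes $y_0<1$, so on $\{y\ge y_0\}$ we get $\Deltap h_j\le0\le\partial_th_j$.

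On $\{y<y_0\}$ the positive part of $\Deltap h_j$ is bounded by
\begin{equation*}
  C\,r^{-p}\,j^{p-1}\rho^{-(\mu+1)(p-2)}\rho^{-\mu-2},
\end{equation*}
using $|V'|\lesssim j\rho^{-\mu-1}$. We must compare this with $j\rho^{-\mu}y/(s(-t))$. Here $-t=e^{-s}=r^{l/y}$, and the main point is that $y$ is small. Then
\begin{equation*}
  \frac1{-t}=r^{-l/y}\gg r^{-p}
\end{equation*}
as soon as $l/y>p$, that is, $y<l/p$. We shrink $y_0$ further (via $\mu$) below $l/(2p)$. The gain is then $r^{-p-l/(2y)}$-type, which beats $r^{-p}$ times any power of $\rho$ and $j$. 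Concretely we need
\begin{equation*}
  j^{p-2}\rho^{-\cdots}\,s\,e^{-s}\lesssim y\,r^{p}.
\end{equation*}
Writing everything in $\rho$ and $y$, this reads
\begin{equation*}
  j^{p-2}\lesssim y^2\rho^{-1}e^{\rho(l/y-p)}\cdot(\text{powers of }\rho).
\end{equation*}
The exponent $\rho(l/y-p)\ge\rho\,l/(2y)$ dominates everything, including the factor $y^2$ as $y\to0$. It suffices that $\rho$ is large in terms of $j$, for example $\rho>j$, since then $e^{c\rho}\gg j^{p-2}\rho^{N}$ for $j$ large. I expect this bookkeeping to be the main obstacle: the estimate must be uniform as $y\to0$ (that is, $t\to0$) and as $y$ approaches $y_0$. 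The key fact is that $y^2e^{cl\rho/y}$ is increasing as $y$ decreases, so the worst case is $y=y_0$, where the estimate follows from $\rho>j$ large. Conclusion: $h_j$ is a continuous supersolution on $G_j=\Theta_{l,1}\cap\{\rho>j\}$.

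Finally we paste. On the interface $\rho=j$ we have $h_j\ge\tfrac12 j^{1-\mu}$. Define
\begin{equation*}
  w_j=\min\{h_j,\tfrac12 j^{1-\mu}\}\ \text{in }G_j,\qquad w_j=\tfrac12j^{1-\mu}\ \text{elsewhere}.
\end{equation*}
This is continuous, hence $p$-superparabolic by the pasting lemma; the constant is a solution. Since $h_j\le j\rho^{-\mu}\to0$ at the origin, $w_j$ tends to zero there. Every boundary point $\xi$ with $|\xi|\ge1/k$ has $|x|\ge1/(k\sqrt2)$, and near the origin the boundary satisfies $|\xi|\lesssim|x|$. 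So $\rho\le\log(k\sqrt2)$ there. Taking $j$ large, $h_j\ge\tfrac12 j\rho^{-\mu}\ge k$ on the part of that boundary set inside $G_j$, while the constant $\tfrac12j^{1-\mu}\ge k$ elsewhere. This gives a barrier family in the sense of \cref{def:barrier-family}, and regularity follows from \cite[Theorem~3.3]{BBGP15}.
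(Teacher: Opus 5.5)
Your proposal is correct and is essentially the paper's proof. It uses the same coordinates $\rho=\log(1/|x|)$ and $y=l\rho/\log(1/(-t))$, the same profile $j\rho^{-\mu}(1-y/4-y^2/4)$ with $\Delta_p h=(p-1)r^{-p}|h_\rho|^{p-2}h_{\rho\rho}$ coming from $n=p$, the same split at a threshold $y_0\le l/(2p)$ with $\mu$ of order $y_0^2$, and the same pasting with the constant $\tfrac12 j^{1-\mu}$ across $\rho=j$; your monotonicity-in-$y$ argument in the small-$y$ region only replaces the paper's elementary bound $\sigma^2e^{-\sigma}\le Ce^{-\sigma/2}$.

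Two small slips need fixing. First, $\partial_t y=-y/(s(-t))<0$, not $+y/(s(-t))$; it is this sign that makes $g'(y)\,\partial_t y>0$, as you go on to use. Second, the bound $|x|\ge 1/(k\sqrt2)$ on $\{|\xi|\ge 1/k\}$ holds only for $l\ge1$; for $0<l<1$ use $|\xi|^2\le |x|^2+|x|^{2l}\le 2|x|^{2\min\{1,l\}}$ instead.
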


\begin{proof}
  By \cite[Lemma~3.6]{BB26}, it suffices to work in $\Theta_{l,1}$. Our goal is to create a barrier family to show that the origin is regular. We start by choosing our profile and providing basic formulas.

  \smallskip
  \noindent\emph{Step 1: Derivative formulas and choice of profile.}
  Set $r=|x|$ and
  \begin{equation}\label{eq:critical-dimension-coordinates}
    \rho=\log\frac1r,
    \qquad
    y=\frac{l\rho}{\log(1/(-t))}.
  \end{equation}
  In $\Theta_{l,1}$, the inequalities $0<-t<r^l<1$ give
  $\log(1/(-t))>l\rho>0$ and hence $0<y<1$.
  Let now $F\in C^2([0,1])$ and $A,\mu>0$ to be fixed later.   Put
  \begin{equation*}
    h(x,t)=A\rho^{-\mu}F(y).
  \end{equation*}
  Since
  \begin{equation*}
    y_\rho=\frac{l}{\log(1/(-t))}=\frac y\rho,
  \end{equation*}
  direct differentiation with slight abuse of notation gives
  \begin{align}
    h_\rho
    ={} & A\rho^{-\mu-1}\left[-\mu F(y)+yF'(y)\right],
    \label{eq:critical-first-rho}                      \\
    h_{\rho\rho}
    ={} & A\rho^{-\mu-2}\left[
      \mu(\mu+1)F(y)-2\mu yF'(y)+y^2F''(y)\right].
    \label{eq:critical-second-rho}
  \end{align}
  Moreover,
  \begin{equation*}
    y_t=-\frac{l\rho}{(-t)\bigl(\log(1/(-t))\bigr)^2}
    =-\frac{y^2}{l\rho(-t)}.
  \end{equation*}
  It follows that
  \begin{equation*}
    h_t(x,t)
    =\frac{A\rho^{-\mu}[-F'(y)]y^2}{l\rho(-t)}.
  \end{equation*}
  We also have
  \begin{equation*}
    h_r=-r^{-1}h_\rho,
    \qquad
    h_{rr}=r^{-2}\bigl(h_{\rho\rho}+h_\rho\bigr).
  \end{equation*}
  Since $n=p$,
  \begin{equation*}
    (p-1)h_{rr}+\frac{n-1}{r}h_r
    =(p-1)r^{-2}h_{\rho\rho},
  \end{equation*}
  and $|h_r|^{p-2}=r^{2-p}|h_\rho|^{p-2}$.  Thus, wherever
  $h_\rho\ne0$, \eqref{eq:radial-formula} gives
  \begin{equation}\label{eq:critical-log-plap}
    \Deltap h(x,t)
    =(p-1)r^{-p}|h_\rho|^{p-2}h_{\rho\rho}.
  \end{equation}
  These formulas guide the choice of $F$.  If $F>0$ and $F'<0$ on $[0,1]$,
  then $h>0$, $h_t>0$, and $h_\rho<0$.  Hence
  \eqref{eq:critical-log-plap} applies, and $\Deltap h$ has the sign of
  $h_{\rho\rho}$.  In \eqref{eq:critical-second-rho}, the terms
  $\mu(\mu+1)F$ and $-2\mu yF'$ are positive, while $y^2F''$ is negative
  when $F''<0$.  For $y$ bounded away from zero, we choose $\mu$ small
  enough to make $h_{\rho\rho}<0$.  For small $y$, we instead use a lower
  bound for $-F'$ and the decay of
  $(-t)r^{-p}=\exp[-\rho(l/y-p)]$ to make $h_t$ dominate any positive
  $\Deltap h$ after localization.  We choose the decreasing concave function
  \begin{equation*}
    F(\sigma)=1-\frac\sigma4-\frac{\sigma^2}4.
  \end{equation*}
  For this choice,
  \begin{equation}\label{eq:critical-profile-bounds}
    \frac12\le F(\sigma)\le1,
    \qquad
    -\frac34\le F'(\sigma)\le-\frac14,
    \qquad
    F''(\sigma)=-\frac12
  \end{equation}
  for $0\le\sigma\le1$.

  The preceding time derivative together with
  \eqref{eq:critical-profile-bounds} gives
  \begin{equation}\label{eq:critical-time-lower}
    h_t
    \ge\frac{A\rho^{-\mu}y^2}{4l\rho(-t)}>0.
  \end{equation}

  To show that the localized profiles are $p$-superparabolic, we split
  $\Theta_{l,1}$ into two regions because $\Deltap h$ can be positive when
  $y$ is close to zero.  Let $y_0\in(0,1)$ be the splitting point, to be
  chosen below.  In Region I, where $y_0\le y<1$, we choose $\mu$ so that
  $\Deltap h\le0$.  In Region II, where $0<y<y_0$, we use $h_t>0$ to
  control any positive $\Deltap h$ after localization.  The regions are
  shown in \cref{fig:critical-y-split} for the eventual choice
  $y_0=l/(2p)$.

  \begin{figure}[H]
    \centering
    \begin{tikzpicture}[x=1.6cm,y=1.6cm,>=stealth,
        every node/.style={font=\small}]
      \def\Rmax{4.8}
      \def\rj{1.65}
      \path[fill=black!4]
      (0,0)--(\Rmax,0)--(\Rmax,-1.55)
      -- plot[smooth,domain=\Rmax:0,samples=80]
      (\x,{-1.55*(\x/\Rmax)^6}) -- cycle;
      \path[fill=black!12]
      plot[smooth,domain=0:\Rmax,samples=80]
      (\x,{-1.55*(\x/\Rmax)^6})
      -- plot[smooth,domain=\Rmax:0,samples=80]
      (\x,{-1.55*(\x/\Rmax)^2}) -- cycle;
      \path[fill=black!30,fill opacity=.22]
      (0,0)--(\rj,0)
      -- plot[smooth,domain=\rj:0,samples=30]
      (\x,{-1.55*(\x/\Rmax)^2}) -- cycle;
      \draw[thick] plot[smooth,domain=0:\Rmax,samples=80]
      (\x,{-1.55*(\x/\Rmax)^2});
      \draw[semithick] plot[smooth,domain=0:\Rmax,samples=80]
      (\x,{-1.55*(\x/\Rmax)^6});
      \draw[thick] (\Rmax,-1.55)--(\Rmax,0);
      \draw[densely dashed]
      (\rj,0.30)--(\rj,{-1.55*(\rj/\Rmax)^2});
      \draw[->] (0,-1.78)--(0,0.42) node[above] {$t$};
      \draw[->] (0,0)--(5.22,0) node[right] {$r=|x|$};
      \fill (0,0) circle (1.2pt) node[above left] {$(0,0)$};
      \node[above] at (4.12,0) {$t=0$ (top)};
      \node[above,font=\scriptsize] at (0.72,0.07)
      {local $h_j$, $A=j$};
      \node[above right,font=\scriptsize] at (\rj,0.19)
      {$r=e^{-j}$ $(\rho=j)$};
      \node[font=\small] at (4.00,-0.20) {II};
      \node[font=\small] at (3.80,-0.67) {I};
    \end{tikzpicture}
    \caption{The split in \cref{thm:critical-dimension}: Region I has
    $-r^l<t\le-r^{2p}$; Region II has $-r^{2p}<t<0$.
    The dashed line marks the localization boundary $r=e^{-j}$.
    The curves are schematic.}
    \label{fig:critical-y-split}
  \end{figure}

  \smallskip
  \noindent\emph{Step 1a: Case 1 (Region I):}
  Suppose that $y_0\le y<1$ and $\rho\ge1$.  If $0<\mu\le1$, then
  \eqref{eq:critical-profile-bounds} and
  \eqref{eq:critical-second-rho} give
  \begin{equation*}
    h_{\rho\rho}
    \le A\rho^{-\mu-2}
    \left(2\mu+\frac32\mu y-\frac12y^2\right)
    \le A\rho^{-\mu-2}
    \left(\frac72\mu-\frac12y_0^2\right).
  \end{equation*}
  The restriction arising from this case is
  $0<\mu\le y_0^2/14$.  We therefore set $\mu=\frac{y_0^2}{14}$.
  Since $y_0\in(0,1)$, this gives $0<\mu<1$, and
  \begin{equation*}
    h_{\rho\rho}
    \le-\frac14A\rho^{-\mu-2}y_0^2.
  \end{equation*}
  By \eqref{eq:critical-log-plap} and
  \eqref{eq:critical-time-lower}, $\Deltap h\le0<h_t$.  Thus, for every
  $A>0$,
  \begin{equation}\label{eq:critical-region-I}
    h_t-\Deltap h>0
    \qquad\text{in }\Theta_{l,1}\cap\{\rho\ge1,\ y_0\le y<1\}.
  \end{equation}

  \smallskip
  \noindent\emph{Step 1b: Case 2 (Region II): $y$ small.}
  It remains to consider $0<y<y_0$. Here the spatial term need not have the favorable sign, so we instead show that the positive time derivative dominates its possible positive part. Equations
  \eqref{eq:critical-first-rho}, \eqref{eq:critical-second-rho}, and
  \eqref{eq:critical-profile-bounds} give, for
  $\rho\ge1$ and $\mu\le1$,
  \begin{equation*}
    |h_\rho|
    \le A\rho^{-\mu-1}(\mu+y),
    \qquad
    h_{\rho\rho}\le\frac72A\mu\rho^{-\mu-2}.
  \end{equation*}
  Substituting these bounds into \eqref{eq:critical-log-plap} gives
  \begin{equation*}
    \Deltap h
    \le Cr^{-p}
    \bigl(A\rho^{-\mu-1}(\mu+y)\bigr)^{p-2}
    A\mu\rho^{-\mu-2}.
  \end{equation*}
  The definitions of $\rho$ and $y$ in
  \eqref{eq:critical-dimension-coordinates} give
  \begin{equation*}
    (-t)r^{-p}
    =\exp\left[-\rho\left(\frac ly-p\right)\right].
  \end{equation*}
  Combining the preceding estimate with \eqref{eq:critical-time-lower}, and
  using $\rho\ge1$ and $\mu+y\le2$, gives
  \begin{align}
    \frac{\Deltap h}{h_t}
     & \le CA^{p-2}\mu\rho^{1-p-\mu(p-2)}
    y^{-2}(-t)r^{-p} \notag               \\
     & \le CA^{p-2}\mu\rho^{1-p}y^{-2}
    \exp\left[-\rho\left(\frac ly-p\right)\right].
    \label{eq:critical-small-y-ratio}
  \end{align}
  Here and below $C$ may depend on $p$ and $l$, but not on $A$, $\rho$, or
  $y$.
  To obtain uniform decay for the right-hand side, we require
  \begin{equation*}
    y<  y_0=\frac{l}{2p}\in(0,1).
  \end{equation*}
  Since $0<y<y_0$, then $l/y-p\ge l/(2y)$.  The elementary inequality
  $s^2e^{-s}\le Ce^{-\nicefrac{s}{2}}$ (for large values exponential beats polynomial and for small values $C$ may be chosen), applied with $s=l\rho/(2y)$, gives
  \begin{align*}
    y^{-2}\exp\left[-\rho\left(\frac ly-p\right)\right]
     & \le y^{-2}e^{-\frac{l\rho}{2y}}     \\
     & \le C\rho^{-2}e^{-\frac{l\rho}{4y}}
    \le C\rho^{-2}e^{-\frac{p\rho}{2}}.
  \end{align*}
  Thus \eqref{eq:critical-small-y-ratio} yields
  \begin{equation}\label{eq:critical-region-II-ratio}
    \frac{\Deltap h}{h_t}
    \le CA^{p-2}\rho^{-p-1}e^{-\frac{p\rho}{2}}
    \qquad\text{in }\Theta_{l,1}\cap\{\rho\ge1,\ 0<y<y_0\}.
  \end{equation}
  The right-hand side of \eqref{eq:critical-region-II-ratio} tends to zero
  as $\rho\to\infty$ for each fixed $A>0$.
  Hence there is $R_A\ge1$ such that $h_t-\Deltap h>0$ in
  $\Theta_{l,1}\cap\{\rho\ge R_A,\ 0<y<y_0\}$.

  \smallskip
  \noindent\emph{Step 2: Amplification and pasting.}
  We now couple the amplitude $j$ with the localization $\rho>j$.
  For positive integers $j$, let
  \begin{equation*}
    h_j(x,t):=j\rho^{-\mu}F(y).
  \end{equation*}
  Applying \eqref{eq:critical-region-II-ratio} with $A=j$ on
  $\Theta_{l,1}\cap\{\rho\ge j,\ 0<y<y_0\}$ gives
  \begin{equation*}
    \frac{\Deltap h_j}{(h_j)_t}
    \le Cj^{p-2}\rho^{-p-1}e^{-\frac{p\rho}{2}}
    \le Cj^{-3}e^{-\frac{pj}{2}},
  \end{equation*}
  and $Cj^{-3}e^{-\frac{pj}{2}}\to0$ as $j\to\infty$.  Hence there exists an integer
  $j_0\ge1$, depending only on $p$ and $l$, such that, for every fixed
  $j\ge j_0$, $\Deltap h_j/(h_j)_t\le1$ throughout
  $\Theta_{l,1}\cap\{\rho\ge j,\ 0<y<y_0\}$.  Since $(h_j)_t>0$ by
  \eqref{eq:critical-time-lower}, this gives
  \begin{equation}\label{eq:critical-region-II}
    (h_j)_t-\Deltap h_j\ge0
    \qquad\text{in }\Theta_{l,1}\cap\{\rho\ge j,\ 0<y<y_0\}.
  \end{equation}

  Combining \eqref{eq:critical-region-I} with $A=j$ and
  \eqref{eq:critical-region-II}, for every $j\ge j_0$ we obtain
  \begin{equation*}
    (h_j)_t-\Deltap h_j\ge0
    \qquad\text{throughout }\Theta_{l,1}\cap\{\rho>j\}.
  \end{equation*}
  Since \eqref{eq:critical-first-rho} also shows that the spatial gradient
  does not vanish on $\Theta_{l,1}\cap\{\rho>j\}$, $h_j$ is a smooth
  classical supersolution, hence a
  continuous weak supersolution and therefore $p$-superparabolic, in
  $\Theta_{l,1}\cap\{\rho>j\}$.

  We now paste to a constant.  On $\Theta_{l,1}\cap\{\rho=j\}$,
  \begin{equation*}
    h_j(x,t)\ge m_j:=\frac12j^{1-\mu},
    \qquad
    m_j\to \infty.
  \end{equation*}
  The limit $m_j\to\infty$ follows from $m_j=\frac12j^{1-\mu}$ and
  $0<\mu<1$.  For every $j\ge j_0$, define
  \begin{equation*}
    w_j(x,t)=
    \begin{cases}
      \min\{h_j(x,t),m_j\}, & \rho>j,    \\
      m_j,                  & \rho\le j.
    \end{cases}
  \end{equation*}
  Since $h_j\ge m_j$ on $\Theta_{l,1}\cap\{\rho=j\}$, the limit of
  $\min\{h_j,m_j\}$ from $\rho>j$ equals $m_j$, matching the constant
  branch.  Thus $w_j$ is continuous across
  $\Theta_{l,1}\cap\{\rho=j\}$.  For every $j\ge j_0$, the pasting lemma applied to
  the local function $h_j$ and the constant $m_j$ shows that $w_j$ is
  positive and $p$-superparabolic in $\Theta_{l,1}$.  For
  $(x,t)\in\overline{\Theta_{l,1}}$ with $0<|x|\le e^{-j}$, we have
  $|t|\le |x|^l\le e^{-lj}$,
  so
  \begin{equation*}
    |(x,t)|\le\bigl(e^{-2j}+e^{-2lj}\bigr)^{\nicefrac{1}{2}}\to 0.
  \end{equation*}
  Given $k\ge1$, choose $j\ge j_0$ so that
  $(e^{-2j}+e^{-2lj})^{\nicefrac{1}{2}}<1/k$ and $m_j\ge k$.
  Every $\xi\in\partial\Theta_{l,1}$ with $|\xi|\ge1/k$
  then lies in $\{\rho<j\}$, where $w_j=m_j$ in a neighborhood relative
  to $\Theta_{l,1}$.
  Consequently,
  \begin{equation}\label{eq:critical-family-boundary}
    \liminf_{\Theta_{l,1}\ni\zeta\to\xi}w_j(\zeta)
    =m_j\ge k
    \qquad\text{for every }\xi\in\partial\Theta_{l,1}
    \text{ with }|\xi|\ge\frac1k.
  \end{equation}
  This verifies condition \eqref{eq:barrier-family} in
  \cref{def:barrier-family}.
  For each fixed $j$,
  sufficiently close to the origin we have $\rho>j$, and
  \begin{equation}\label{eq:critical-family-vanishing}
    0<w_j(x,t)\le h_j(x,t)
    \le j\rho^{-\mu}\to 0
  \end{equation}
  as $(x,t)\to(0,0)$ within $\Theta_{l,1}$.
  Equations \eqref{eq:critical-family-boundary} and
  \eqref{eq:critical-family-vanishing}, together with positivity and
  $p$-superparabolicity, verify \cref{def:barrier-family} for
  $\{w_j\}_{j\ge j_0}$ at the origin in $\Theta_{l,1}$.  Thus the origin is
  regular with respect to $\Theta_{l,1}$ by \cite[Theorem~3.3]{BBGP15}.
\end{proof}

\section{Regularity for $l=p$, $p_*\le p<2$}\label{sec:singular-endpoint}

By \cite[Lemma~3.6]{BB26}, we work in $\Theta_{p,1}$.  The two subsections below
share an amplification step.  Since $p<2$, a local function $h$ with
$h_t>0$ and $h_t-\Deltap h\ge0$ remains a supersolution after multiplication
by any $j\ge1$.  If $h>0$ and tends to zero at the origin, pasting these
multiples to increasing constants away from the origin yields a barrier
family.  The task in each subsection is to construct such an $h$ with the
supersolution inequality valid throughout a fixed neighborhood of the origin
in $\Theta_{p,1}$.

To analyze $h_t-\Deltap h\ge0$ in both subsections, set $r=|x|$ and
use the logarithmic radius and space-time ratio
\begin{equation}\label{eq:singular-coordinates}
  \rho=\log\frac1r,
  \qquad
  y=\frac{-t}{r^p}\in(0,1).
\end{equation}
The values $y=0$ and $y=1$ describe the top and initial boundary,
respectively.  The supersolution inequality must hold for every $0<y<1$.
We observe
\begin{equation*}
  \rho_r=-\frac1r,
  \qquad
  y_r=-\frac{py}{r},
  \qquad
  y_t=-r^{-p}.
\end{equation*}
For a smooth radial function, write $h=h(r,t)$. Since
$r^2h_{rr}=r(rh_r)_r-rh_r$, the radial $p$-Laplacian formula
\eqref{eq:radial-formula} can be written as
\begin{equation}\label{eq:endpoint-radial}
  \Deltap h=
  |h_r|^{p-2}
  \left((p-1)h_{rr}+\frac{n-1}{r}h_r\right)
  =r^{-p}|rh_r|^{p-2}
  \left[(p-1)r(rh_r)_r+(n-p)rh_r\right],
\end{equation}
whenever $h_r\ne0$.

\subsection{The strict singular range}\label{sec:singular-strict}

\begin{theorem}[Regularity in the strict singular range]\label{thm:singular-strict}
  Let $l=p$, $n\ge2$, $p_*<p<2$, and $\theta>0$.  Then the origin is
  regular with respect to $\Theta_{l,\theta}$.
\end{theorem}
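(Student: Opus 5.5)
We reduce to $\theta=1$ by \cite[Lemma~3.6]{BB26} and follow the shared amplification scheme described at the start of this section. The plan is to construct a smooth radial $h>0$ in $\Theta_{p,1}\cap\{\rho>R\}$ with $h\to0$ at the origin, $h_t>0$, $h_r\ne0$, and $h_t-\Deltap h\ge0$ there. We use the profile announced in \cref{sec:idea},
\begin{equation*}
  h=\rho^{-a}\bigl(1-\rho^{-d_1}y^{\beta_1}-\rho^{-d_2}y^{\beta_2}\bigr),
  \qquad 0<\beta_1<\beta_*:=\frac{n-p}{p(p-1)}<\beta_2<\frac1{2-p},
\end{equation*}
where $a>0$ is small and $d_1,d_2\ge0$ are to be tuned. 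The interval for $\beta_2$ is nonempty exactly when $p>p_*$: the condition $(n-p)(2-p)<p(p-1)$ is equivalent to $(n+1)p>2n$.

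\textbf{Spatial part.} Since $y=(-t)r^{-p}$, we have $r\partial_r(y^\beta)=-p\beta y^\beta$, and $r\partial_r$ acts on $\rho^{-a}$ by $-\partial_\rho$. For a pure term $y^\beta$, the bracket in \eqref{eq:endpoint-radial} becomes $p\beta y^\beta\bigl[(p-1)p\beta-(n-p)\bigr]$, which is negative for $\beta<\beta_*$ and positive for $\beta>\beta_*$. Thus in $h$ the term $-\rho^{-d_2}y^{\beta_2}$ contributes a bracket of order $-c\rho^{-a-d_2}y^{\beta_2}$, which is favorable. The $\beta_1$ term is unfavorable but small when $\rho^{-d_1}$ is small. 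The term $\rho^{-a}$ contributes bracket terms of order $a\rho^{-a-1}$ with the wrong sign only through the first-order part $(n-p)rh_r$. We will first check that $rh_r$ has a fixed sign: $rh_r=a\rho^{-a-1}(\cdots)+p\rho^{-a}(\beta_1\rho^{-d_1}y^{\beta_1}+\beta_2\rho^{-d_2}y^{\beta_2})>0$. Then $h_r\ne0$ and $|rh_r|^{p-2}$ is comparable to the size of $rh_r$. Since $p<2$, this factor is large where $rh_r$ is small, which happens for small $y$.

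\textbf{Case split in $y$.} For $y\ge y_0$, with $y_0$ fixed, we want $\Deltap h\le0\le h_t$. To arrange this, choose $d_2<d_1$ (for instance $d_2=0$ and $d_1>0$) and $\rho$ large, so that the negative $\beta_2$ contribution $\sim\rho^{-a-d_2}y_0^{\beta_2}$ beats both the $\beta_1$ contribution $\rho^{-a-d_1}$ and the $a\rho^{-a-1}$ terms. For $y<y_0$, we compare ratios. Here $h_t=r^{-p}\rho^{-a}(\beta_1\rho^{-d_1}y^{\beta_1-1}+\beta_2\rho^{-d_2}y^{\beta_2-1})$, so $h_t\gtrsim r^{-p}\rho^{-a-d_1}y^{\beta_1-1}$. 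The positive part of $\Deltap h$ is at most $r^{-p}(rh_r)^{p-2}\cdot(\text{positive part of bracket})$. The bracket's positive part is $\lesssim a\rho^{-a-1}+\rho^{-a-d_2}y^{\beta_2}$, and $(rh_r)^{p-2}\le(a\rho^{-a-1}+p\beta_1\rho^{-a-d_1}y^{\beta_1})^{p-2}$, using only the lower bound on $rh_r$. Estimating $(rh_r)^{p-2}$ by the $y^{\beta_1}$ piece, and since $p-2<0$, the ratio $\Deltap h/h_t$ is bounded by quantities like
\begin{equation*}
  \rho^{(a+d_1)(2-p)+d_1-a-1}\,y^{1-\beta_1(3-p)}
  \quad\text{and}\quad
  \rho^{(a+d_1)(2-p)+d_1-a-d_2}\,y^{\beta_2+1-\beta_1(3-p)}.
\end{equation*}
The condition $\beta_2<1/(2-p)$ together with $\beta_1<1$ is what should keep the $y$-exponents nonnegative after we use the alternative bound $(rh_r)^{p-2}\le(a\rho^{-a-1})^{p-2}$ whenever $y^{\beta_1}\lesssim\rho^{d_1-1}$. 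We will optimize over the two bounds. The $\rho$-exponents are made negative by choosing $a$, $d_1$ small, with $d_2$ between them and $0$ if needed. The ratio is then $\le1/2$ for $\rho\ge R$ and $y<y_0$.

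\textbf{Amplification.} Once $h$ is a supersolution on $\{\rho>R\}$ with $h_t>0$, we have $(jh)_t-\Deltap(jh)=j(h_t-j^{p-2}\Deltap h)\ge0$ for $j\ge1$, because $p<2$. We then paste as in \cref{thm:critical-dimension}: set $w_j=\min\{jh,m_j\}$ on $\{\rho>R_j\}$ and $m_j$ elsewhere, where $m_j=j\min_{\rho=R_j}h\to\infty$ for suitable $R_j\uparrow\infty$ (for example $R_j$ with $jR_j^{-a}/2\to\infty$). By the pasting lemma and \cite[Theorem~3.3]{BBGP15}, this gives a barrier family and hence regularity.

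\textbf{Main obstacle.} The small-$y$ regime is the hard step: the singular factor $|rh_r|^{p-2}$ blows up as $rh_r\to0$, and the bookkeeping of the exponents $a,d_1,d_2,\beta_1,\beta_2$ must make every term of $\Deltap h/h_t$ decay in $\rho$ uniformly in $y$. This is where $p>p_*$ enters. I would first try $d_2=0$ with $d_1$ and $a$ tiny, and if needed adjust using a two-regime bound according to whether $a\rho^{-1}$ or $\rho^{-d_1}y^{\beta_1}$ dominates $rh_r$.
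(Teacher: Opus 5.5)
Your overall scheme is sound: the profile, the positivity of $rh_r$, the sign change of the $y^\beta$ coefficient at $\beta_*$, and the amplification and pasting step. The small-$y$ step, which you rightly call the crux, fails as set up.

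\textbf{The gap.} You bound the positive part of the bracket in \eqref{eq:endpoint-radial} by $a\rho^{-a-1}+\rho^{-a-d_2}y^{\beta_2}$. By your own sign computation, however, $-\rho^{-d_2}y^{\beta_2}$ is the \emph{favorable} term. The unfavorable contribution is $+C\rho^{-a-d_1}y^{\beta_1}$, which comes from the $\beta_1$ correction because $\beta_1<\beta_*$. With the lower bounds you keep, $h_t\ge c\,r^{-p}\rho^{-a-d_1}y^{\beta_1-1}$ and $rh_r\ge c\rho^{-a-d_1}y^{\beta_1}$, this term yields
\[
  \frac{\Deltap h}{h_t}\le C\rho^{(a+d_1)(2-p)}\,y^{1-\beta_1(2-p)}.
\]
The $\rho$-exponent here is strictly positive. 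So no choice of small $a,d_1,d_2$ makes this bound small uniformly on $0<y<y_0$ for a fixed $y_0$; already at $y=y_0/2$ it grows with $\rho$. Your displayed $\rho$-exponents also carry a spurious $-a$, since the factors $\rho^{-a}$ cancel between the bracket and $h_t$.

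\textbf{The missing idea.} The $\beta_2$ term must be exploited below $y_0$ as well, with a $\rho$-dependent threshold. At any point where $\Deltap h>0$, the favorable term must be dominated:
\[
  \rho^{-d_2}y^{\beta_2}\le C\bigl(\rho^{-1}+\rho^{-d_1}y^{\beta_1}\bigr).
\]
In the regime $\rho^{-d_1}y^{\beta_1}>\rho^{-1}$, with $0\le d_2<d_1$, this confines $y\le C\rho^{-(d_1-d_2)/(\beta_2-\beta_1)}$. Inserting this into the display above gives decay exactly when
\[
  (a+d_1)(2-p)<\frac{d_1-d_2}{\beta_2-\beta_1}\bigl(1-\beta_1(2-p)\bigr).
\]
At $a=d_2=0$ this reduces to $\beta_2(2-p)<1$. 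This is where $\beta_2<1/(2-p)$, and hence $p>p_*$, actually enters; it is not about keeping $y$-exponents nonnegative, as you suggest.

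In the complementary regime $\rho^{-d_1}y^{\beta_1}\le\rho^{-1}$, your alternative bound $|rh_r|^{p-2}\le(c\rho^{-a-1})^{p-2}$ does work. Using $\beta_1<1$, it gives $\Deltap h/h_t\le C\rho^{a(2-p)+d_1-(p-1)}$, so it only needs $a(2-p)+d_1<p-1$. One then fixes $0<d_1<p-1$ and takes $a>0$ and $d_2<d_1$ small, by continuity from $a=d_2=0$.

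Instead of the sign argument, you could keep the $\rho^{-d_2}y^{\beta_2}$ pieces in the lower bounds for $h_t$ and $rh_r$. That route leads to the same condition $\beta_2(2-p)<1$. Either way, some use of the $\beta_2$ term in the small-$y$ regime is indispensable, and your current plan has none.
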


The profile \eqref{eq:strict-h} uses two corrections in the ratio $y$
defined in \eqref{eq:singular-coordinates}: the higher power of $y$
makes the spatial term favorable away from $t=0$, while the lower
power supplies the time derivative needed near $t=0$.
\Cref{fig:singular-strict-geometry} shows the case split used when
$\Deltap h>0$.

\begin{figure}[H]
  \centering
  \begin{tikzpicture}[x=1.6cm,y=1.6cm,>=stealth,
      every node/.style={font=\small}]
    \def\Rmax{4.8}
    \def\xmin{0.08}

    \path[fill=black!4]
    (0,0)--(\Rmax,0)
    -- plot[smooth,domain=\Rmax:\xmin,samples=90]
    (\x,{-1.55*(\x/\Rmax)^2/
        (2+ln(\Rmax/\x))^0.60})
    -- cycle;

    \path[fill=black!12]
    (0,0)
    -- plot[smooth,domain=\xmin:\Rmax,samples=90]
    (\x,{-1.55*(\x/\Rmax)^2/
        (2+ln(\Rmax/\x))^0.60})
    -- plot[smooth,domain=\Rmax:\xmin,samples=90]
    (\x,{-1.55*(\x/\Rmax)^2})
    -- cycle;

    \draw[thick]
    (0,0)
    -- plot[smooth,domain=\xmin:\Rmax,samples=90]
    (\x,{-1.55*(\x/\Rmax)^2});

    \draw[semithick]
    (0,0)
    -- plot[smooth,domain=\xmin:\Rmax,samples=90]
    (\x,{-1.55*(\x/\Rmax)^2/
        (2+ln(\Rmax/\x))^0.60});

    \draw[thick] (\Rmax,-1.55)--(\Rmax,0);

    \draw[->] (0,-1.78)--(0,0.42) node[above] {$t$};
    \draw[->] (0,0)--(5.22,0) node[right] {$r=|x|$};
    \fill (0,0) circle (1.2pt) node[above left] {$(0,0)$};

    \node[above] at (4.12,0) {$t=0$ (top)};

    \node[font=\small] at (4.0,-0.25) {I};

    \node[font=\small] at (4.0,-0.82) {II};

  \end{tikzpicture}

  \caption{The case split for $\Deltap h>0$ in \cref{thm:singular-strict}:
    Region I has $y\le\rho^{-\frac{1-d_1}{\beta_1}}$; Region II has
    $y>\rho^{-\frac{1-d_1}{\beta_1}}$, with $y$ and $\rho$ as in
    \eqref{eq:singular-coordinates}. The separating curve is schematic.}
  \label{fig:singular-strict-geometry}
\end{figure}

\begin{proof}
  By \cite[Lemma~3.6]{BB26}, it suffices to work in $\Theta_{p,1}$.
  \smallskip

  \noindent\emph{Step 1: Construct a local supersolution.}
  Let $a,d_1,d_2,\beta_1,\beta_2>0$, to be
  fixed later, and set
  \begin{equation}\label{eq:strict-h}
    h(r,t)=\rho^{-a}
    \left(1-\rho^{-d_1}y^{\beta_1}-\rho^{-d_2}y^{\beta_2}\right).
  \end{equation}

  \smallskip

  \noindent\emph{Derivative estimates.}
  Direct differentiation gives
  \begin{equation}\label{eq:strict-time}
    h_t=\frac{\rho^{-a}}{r^p}
    \left(\beta_1\rho^{-d_1}y^{\beta_1-1}
    +\beta_2\rho^{-d_2}y^{\beta_2-1}\right)>0
  \end{equation}
  and
  \begin{equation}\label{eq:strict-radial-first}
    rh_r=
    \rho^{-a}\left[\frac a\rho+
    \sum_{i=1}^2\rho^{-d_i}y^{\beta_i}
    \left(p\beta_i-\frac{a+d_i}{\rho}\right)\right].
  \end{equation}
  For all sufficiently large $\rho$, the coefficients
  $p\beta_i-(a+d_i)/\rho$ in \eqref{eq:strict-radial-first} are positive
  and bounded away from zero.  Hence
  \begin{equation}\label{eq:strict-radial-bound}
    rh_r\ge
    c\rho^{-a}
    \left(\rho^{-1}+\rho^{-d_1}y^{\beta_1}
    +\rho^{-d_2}y^{\beta_2}\right)>0.
  \end{equation}
  The constants may depend on $n,p$ and the chosen
  exponents, but not on $r,t,\rho$, or $y$.

  Substituting the two correction terms
  \begin{equation*}
    -\rho^{-a-d_i}y^{\beta_i},\qquad i=1,2,
  \end{equation*}
  into the radial $p$-Laplacian formula \eqref{eq:endpoint-radial}, we find
  that the leading coefficient of the $y^{\beta_i}$ contribution, i.e. the
  part of the coefficient containing no factor $\rho^{-1}$ or $\rho^{-2}$,
  is
  \begin{equation*}
    p\beta_i\bigl(n-p-p(p-1)\beta_i\bigr).
  \end{equation*}
  Thus the leading coefficient changes sign at
  \begin{equation*}
    \beta_i=\frac{n-p}{p(p-1)}.
  \end{equation*}
  We therefore choose
  \begin{equation}\label{eq:strict-power-signs}
    \beta_1<\frac{n-p}{p(p-1)}<\beta_2.
  \end{equation}
  By \eqref{eq:strict-power-signs} and the calculation in
  \cref{app:strict-calculation}, for sufficiently large $\rho$ and uniformly
  over $0<y<1$ in $\Theta_{p,1}$,
  \begin{equation}\label{eq:strict-spatial-sign}
    \Deltap h\le
    r^{-p}|rh_r|^{p-2}\rho^{-a}
    \left[C\left(\rho^{-1}+\rho^{-d_1}y^{\beta_1}\right)
    -c\rho^{-d_2}y^{\beta_2}\right].
  \end{equation}
  At points where $\Deltap h>0$, dropping the favorable negative $\beta_2$-term in
  \eqref{eq:strict-spatial-sign}
  and using the lower bound \eqref{eq:strict-radial-bound}, together with
  $p-2<0$, gives
  \begin{equation}\label{eq:strict-spatial-bound}
    0<\Deltap h\le
    Cr^{-p}\rho^{-a(p-1)}
    \left(\rho^{-1}+\rho^{-d_1}y^{\beta_1}\right)^{p-1}.
  \end{equation}
  Where $\Deltap h\le0$, the supersolution inequality follows from
  $h_t>0$ in \eqref{eq:strict-time}.  Where $\Deltap h>0$, we compare
  \eqref{eq:strict-spatial-bound} with \eqref{eq:strict-time} and split
  according to whether $\rho^{-d_1}y^{\beta_1}$ is at most or greater
  than $\rho^{-1}$, as shown in \cref{fig:singular-strict-geometry}.

  \smallskip

  \noindent\emph{Step 1a: Case 1, very close to the top (Region I in
    \cref{fig:singular-strict-geometry}).}
  At points of $\Theta_{p,1}$ with $\Deltap h>0$, assume
  \begin{equation*}
    \rho^{-d_1}y^{\beta_1}\le\rho^{-1},
    \qquad\text{equivalently}\qquad
    y\le\rho^{-\frac{1-d_1}{\beta_1}}.
  \end{equation*}
  We will choose $\beta_1<1$. Then
  \eqref{eq:strict-time} gives
  $h_t\ge\beta_1r^{-p}\rho^{-a-d_1}$, because
  $y^{\beta_1-1}\ge1$. Thus \eqref{eq:strict-spatial-bound} yields
  \begin{equation}\label{eq:strict-case1}
    \frac{\Deltap h}{h_t}
    \le C\rho^{a(2-p)+d_1-(p-1)}.
  \end{equation}
  The right-hand side of \eqref{eq:strict-case1} tends to zero as
  $\rho\to\infty$ provided
  \begin{equation}\label{eq:strict-requirement1}
    a(2-p)+d_1<p-1.
  \end{equation}
  Under \eqref{eq:strict-requirement1}, the ratio in
  \eqref{eq:strict-case1} is below one for sufficiently large $\rho$.
  Hence, for sufficiently large $\rho$ in $\Theta_{p,1}$,
  \begin{equation}\label{eq:strict-case1-conclusion}
    h_t-\Deltap h>0
    \quad\text{if }\Deltap h>0\text{ and }
    \rho^{-d_1}y^{\beta_1}\le\rho^{-1}.
  \end{equation}

  \smallskip

  \noindent\emph{Step 1b: Case 2, the remaining points with positive $p$-Laplacian
    (Region II in \cref{fig:singular-strict-geometry}).}
  At the remaining points of $\Theta_{p,1}$ with $\Deltap h>0$, suppose
  $\rho^{-d_1}y^{\beta_1}>\rho^{-1}$.
  The negative $\beta_2$-term in \eqref{eq:strict-spatial-sign}, together
  with $\Deltap h>0$, implies
  \begin{equation}\label{eq:strict-confinement}
    \rho^{-d_2}y^{\beta_2}
    \le C\left(\rho^{-1}+\rho^{-d_1}y^{\beta_1}\right).
  \end{equation}
  Since $\rho^{-d_1}y^{\beta_1}>\rho^{-1}$, the bound
  \eqref{eq:strict-confinement} simplifies to
  \begin{equation*}
    \rho^{-d_2}y^{\beta_2}\le C\rho^{-d_1}y^{\beta_1}.
  \end{equation*}
  If $d_2<d_1$, this implies that such points satisfy
  \begin{equation}\label{eq:strict-y-restriction}
    y\le C\rho^{-\frac{d_1-d_2}{\beta_2-\beta_1}}.
  \end{equation}
  Using \eqref{eq:strict-time} and \eqref{eq:strict-spatial-bound}, we obtain
  \begin{align}
    \frac{\Deltap h}{h_t}
     & \le C\rho^{(a+d_1)(2-p)}y^{1-\beta_1(2-p)}\notag \\
     & \le C\rho^{(a+d_1)(2-p)
      -\frac{d_1-d_2}{\beta_2-\beta_1}\left(1-\beta_1(2-p)\right)}.
    \label{eq:strict-case2}
  \end{align}
  The exponent of $y$ is positive because $\beta_1<1$ and $0<2-p<1$.
  The right-hand side of \eqref{eq:strict-case2} tends to zero as
  $\rho\to\infty$ provided
  \begin{equation}\label{eq:strict-requirement2}
    (a+d_1)(2-p)
    <\frac{d_1-d_2}{\beta_2-\beta_1}
    \left(1-\beta_1(2-p)\right).
  \end{equation}
  Under \eqref{eq:strict-requirement2}, the ratio in
  \eqref{eq:strict-case2} is below one for sufficiently large $\rho$.
  Hence, for sufficiently large $\rho$ in $\Theta_{p,1}$,
  \begin{equation}\label{eq:strict-case2-conclusion}
    h_t-\Deltap h>0
    \quad\text{if }\Deltap h>0\text{ and }
    \rho^{-d_1}y^{\beta_1}>\rho^{-1}.
  \end{equation}

  \smallskip

  \noindent\emph{Step 2: Parameter choices.}
  The assumption $p>p_*$ is exactly what gives
  \begin{equation*}
    \frac{n-p}{p(p-1)}<\frac1{2-p}.
  \end{equation*}

  The interval for $\beta_2$ is therefore nonempty.  Choose
  \begin{equation*}
    0<\beta_1<\min\left\{1,\frac{n-p}{p(p-1)}\right\},
    \qquad
    \frac{n-p}{p(p-1)}<\beta_2<\frac1{2-p},
  \end{equation*}
  and fix any $0<d_1<p-1$.
  To verify that suitable choices can be made, first set $a=d_2=0$.  At
  these values, the margin in \eqref{eq:strict-requirement1} is
  $p-1-d_1>0$, and the right-hand side minus the left-hand side of
  \eqref{eq:strict-requirement2} is
  \begin{equation*}
    \frac{d_1\left(1-\beta_2(2-p)\right)}{\beta_2-\beta_1}>0.
  \end{equation*}
  By continuity, both inequalities persist for sufficiently small positive
  $a,d_2$, chosen with $d_2<d_1$.  This fixes all five exponents.  The zero
  values were used only to exhibit the positive margins.

  \begin{samepage}
    After fixing the five exponents, choose $\rho_0\ge1$ large enough for
    the following properties.  Whenever $\rho\ge\rho_0$, the bracket in
    \eqref{eq:strict-h} lies between $1/2$ and $1$ for $0\le y\le1$.
    For $\rho\ge\rho_0$ and $0<y<1$, the estimates
    \eqref{eq:strict-radial-bound} and \eqref{eq:strict-spatial-sign}
    hold.  The right-hand sides of \eqref{eq:strict-case1} and
    \eqref{eq:strict-case2} are below one in their respective cases
    whenever $\rho\ge\rho_0$.
    The choice of $\rho_0$ is uniform in $y$.  The profile therefore
    satisfies
    \begin{equation}\label{eq:strict-profile-bounds}
      \tfrac12\rho^{-a}\le h\le\rho^{-a}
      \qquad(\rho\ge\rho_0,\ 0\le y\le1).
    \end{equation}
    If $\Deltap h\le0$, then \eqref{eq:strict-time} gives
    $h_t-\Deltap h>0$.  At points with $\Deltap h>0$, the estimate
    \eqref{eq:strict-spatial-bound} leads to the two exhaustive
    conclusions \eqref{eq:strict-case1-conclusion} and
    \eqref{eq:strict-case2-conclusion}.  The choice of $\rho_0$ makes
    both conclusions valid throughout $\Theta_{p,1}\cap\{\rho>\rho_0\}$.
    Equations \eqref{eq:strict-time} and
    \eqref{eq:strict-radial-bound} also give $h_t>0$ and $h_r>0$.
    Therefore
    \begin{equation}\label{eq:strict-local-supersolution}
      h_t-\Deltap h\ge0,\qquad h_t>0,\qquad h_r>0
      \quad\text{in }\Theta_{p,1}\cap\{\rho>\rho_0\}.
    \end{equation}
    Thus $h$ is a positive smooth classical supersolution in
    $\Theta_{p,1}\cap\{\rho>\rho_0\}$.
  \end{samepage}

  \smallskip

  \noindent\emph{Step 3: Amplify and paste.}
  The cutoff $\rho_0$ in \eqref{eq:strict-local-supersolution} is fixed
  before choosing the amplitude $j$.  Since $1<p<2$, for every $j\ge1$
  \begin{equation}\label{eq:singular-amplification}
    (jh)_t-\Deltap(jh)
    =j^{p-1}\bigl[(j^{2-p}-1)h_t+(h_t-\Deltap h)\bigr]\ge0.
  \end{equation}
  Since $h_r>0$ by \eqref{eq:strict-local-supersolution}, $jh$ is a
  smooth classical supersolution, hence $p$-superparabolic, in
  $\Theta_{p,1}\cap\{\rho>\rho_0\}$.

  The formula \eqref{eq:strict-h} extends continuously to $y=0$ and
  $y=1$ for $r>0$, and \eqref{eq:strict-profile-bounds} makes this
  extension positive on the boundary of
  $\Theta_{p,1}\cap\{\rho>\rho_0\}$ away from the origin.  In particular,
  with $c_0:=\frac12\rho_0^{-a}>0$,
  \begin{equation}\label{eq:strict-interface-lower}
    h(|x|,t)\ge c_0
    \qquad\text{on }\overline{\Theta}_{p,1}
    \cap\{|x|=e^{-\rho_0}\}.
  \end{equation}
  For every positive integer $j$, define
  \begin{equation*}
    w_j(x,t)=
    \begin{cases}
      \min\{jh(|x|,t),jc_0\}, & |x|<e^{-\rho_0},    \\
      jc_0,                   & |x|\ge e^{-\rho_0}.
    \end{cases}
  \end{equation*}
  By \eqref{eq:strict-interface-lower}, the limit of the inner branch
  $\min\{jh,jc_0\}$ at $|x|=e^{-\rho_0}$ equals $jc_0$, matching the outer
  branch.  The pasting lemma \cite[Lemma~2.9]{BBGP15} therefore makes $w_j$
  positive and $p$-superparabolic in $\Theta_{p,1}$.

  \smallskip

  \noindent\emph{Step 4: Verify the barrier-family conditions.}
  For each fixed $j$, \eqref{eq:strict-profile-bounds} gives
  \begin{equation}\label{eq:strict-family-vanishing}
    0<w_j(x,t)\le jh(|x|,t)\le j\rho^{-a}\to0
    \qquad\text{as }(x,t)\to(0,0)\text{ within }\Theta_{p,1}.
  \end{equation}
  For each integer $k\ge1$, the set
  \begin{equation*}
    K_k:=\{\xi\in\partial\Theta_{p,1}:|\xi|\ge1/k\}
  \end{equation*}
  is compact.  The formula for $w_1$ and
  \eqref{eq:strict-profile-bounds} give it a continuous positive trace
  on $K_k$.  Write $w_1(\xi)$ for this trace, set
  $c_k:=\min_{\xi\in K_k}w_1(\xi)>0$, and choose an integer
  $j\ge k/c_k$.  Since $w_j=jw_1$,
  \begin{equation}\label{eq:strict-family-boundary}
    \liminf_{\Theta_{p,1}\ni\zeta\to\xi}w_j(\zeta)
    =j w_1(\xi)\ge j c_k\ge k
    \qquad\text{for every }\xi\in K_k.
  \end{equation}
  Equations \eqref{eq:strict-family-vanishing} and
  \eqref{eq:strict-family-boundary}, together with positivity and
  $p$-superparabolicity, verify \cref{def:barrier-family} in
  $\Theta_{p,1}$ at the origin.  The barrier-family characterization
  \cite[Theorem~3.3]{BBGP15} proves the theorem.
\end{proof}

\subsection{The double-critical endpoint}\label{sec:double-critical}

\begin{theorem}[Regularity at the double-critical endpoint]\label{thm:double-critical}
  Let $l=p$, $n\ge2$, $p=p_*$, and $\theta>0$.  Then the origin
  is regular with respect to $\Theta_{l,\theta}$.
\end{theorem}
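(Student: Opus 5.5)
The argument follows the template of \cref{thm:singular-strict}. First construct one positive local supersolution $h$ on $\Theta_{p,1}\cap\{\rho>\rho_0\}$ with $h_t>0$, $h_r>0$ and $h\to0$ at the origin. Then amplify by $j\ge1$ using \eqref{eq:singular-amplification}, which is valid because $p<2$. Paste to the constant $jc_0$ across $|x|=e^{-\rho_0}$ and verify \cref{def:barrier-family} exactly as in Steps~3--4 there. By \cite[Lemma~3.6]{BB26} it suffices to treat $\theta=1$. All the new work is in the local profile. At $p=p_*$ the window $\frac{n-p}{p(p-1)}<\beta_2<\frac1{2-p}$ collapses to the single value $\beta_2=\frac{n+1}{2}$. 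I would therefore use the profile announced in \cref{sec:idea},
\begin{equation*}
  h=(\log\rho)^{-\nicefrac14}\Bigl[1-\varepsilon\rho^{-\nicefrac12}y^{\nicefrac12}-\varepsilon y^{\frac{n+1}2}\bigl(1+\tfrac{1+\log(1/y)}{\sqrt{\log\rho}}\bigr)^2\Bigr],
\end{equation*}
with $\rho,y$ as in \eqref{eq:singular-coordinates} and $\varepsilon>0$ small. Each factor is chosen to decay more slowly than the corresponding power in the strict case. This is needed because the margins of \eqref{eq:strict-requirement1}--\eqref{eq:strict-requirement2} degenerate when $\beta_2(2-p)=1$.

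The first step is to compute $h_t$ and $rh_r$ and to feed them into \eqref{eq:endpoint-radial}. We have $h_t\ge c\,\varepsilon r^{-p}(\log\rho)^{-1/4}\rho^{-1/2}y^{-1/2}$ from the $y^{1/2}$ term, and the $y^{(n+1)/2}$ term also contributes positively to $h_t$. For $rh_r$, the $\rho$-derivative of $(\log\rho)^{-1/4}$ gives the positive term $\frac14\rho^{-1}(\log\rho)^{-5/4}$. Each $y$-correction contributes positively at leading order via $y_r=-py/r$, so $rh_r\ge c(\log\rho)^{-1/4}\bigl(\rho^{-1}(\log\rho)^{-1}+\varepsilon\rho^{-1/2}y^{1/2}+\varepsilon y^{(n+1)/2}g\bigr)$, where $g$ denotes the squared logarithmic factor. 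Next comes the spatial bracket $(p-1)r(rh_r)_r+(n-p)rh_r$. Acting on $y^\beta$ it produces the coefficient $p\beta(n-p-p(p-1)\beta)$, and this coefficient vanishes at $\beta=\frac{n+1}2$ when $p=p_*$. The second correction therefore contributes only through derivatives falling on the logarithmic factor. Writing $s=\log(1/y)$ and $L=\sqrt{\log\rho}$, the factor is $(1+(1+s)/L)^2$. Its second derivative in $s$ is $2/L^2>0$, and with the minus sign in front this gives a negative contribution of size roughly $\varepsilon y^{(n+1)/2}/\log\rho$. The first-derivative terms cancel against the vanishing leading coefficient only up to a term of order $\varepsilon y^{(n+1)/2}(1+(1+s)/L)/L$. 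I will need to check that this term also has a good sign, or that it is dominated. Getting the sign bookkeeping of these logarithmic terms right is the main obstacle. I would first expand everything in powers of $1/L$ and $\rho^{-1}$ and check the coefficients explicitly, adjusting the constant $1$ inside $1+\log(1/y)$ if needed.

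With a bound of the type \eqref{eq:strict-spatial-sign} in hand, now with a negative $y^{(n+1)/2}$-term weighted by $1/\log\rho$, the case split mirrors Steps~1a--1b. In Region~I, where the $\rho^{-1/2}y^{1/2}$ correction is below $\rho^{-1}(\log\rho)^{-1}$, the time derivative from $y^{1/2}$ beats the positive part. The ratio behaves like a negative power of $\rho$ times logarithms, because $2-p<1$ and $\rho^{-1/2}y^{-1/2}\ge\rho^{-1}$ there. In Region~II, positivity of $\Deltap h$ forces $y$ to be small by the analogue of \eqref{eq:strict-confinement}, here at least a power of $\rho$ or of $\log\rho$. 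The factor $y^{1-(2-p)/2}$ in $\Deltap h/h_t$ then gives decay. I would choose $\varepsilon$ small so that the bracket lies in $[\frac12,1]$ and so that the cross terms are absorbed. Finally I would fix $\rho_0$ large, uniformly in $y\in(0,1)$. That yields \eqref{eq:strict-local-supersolution} for this $h$, and the amplification and pasting then go through verbatim with $c_0=\frac12(\log\rho_0)^{-1/4}$. This completes the barrier family and, by \cite[Theorem~3.3]{BBGP15}, the regularity.
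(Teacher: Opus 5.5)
Your architecture matches the paper's: the profile \eqref{eq:critical-h}, the lower bounds for $h_t$ and $rh_r$, amplification via \eqref{eq:singular-amplification}, pasting, and the barrier-family check. The gap is in the step you flag as the main obstacle. It comes out the wrong way, and your case analysis rests on it.

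\textbf{The sign is unfavourable.} Put $s=\log(1/y)$, $L=\sqrt{\log\rho}$, $Q=1+(1+s)/L$, and $\mathcal R[f]=(p-1)r(rf_r)_r+(n-p)rf_r$, so that $\Deltap h=r^{-p}|rh_r|^{p-2}\mathcal R[h]$. Using $r\partial_ry=-py$ and $n-p=p(p-1)\frac{n+1}{2}$, one gets, up to terms carrying an extra factor $(\rho\log\rho)^{-1}$,
\begin{equation*}
  \mathcal R\bigl[y^{\frac{n+1}{2}}g(s)\bigr]
  =p^2(p-1)\,y^{\frac{n+1}{2}}\Bigl(g''(s)-\frac{n+1}{2}\,g'(s)\Bigr).
\end{equation*}
With $g=Q^2$, $g'=2Q/L$, $g''=2/L^2$, the second correction contributes
\begin{equation*}
  \varepsilon(\log\rho)^{-1/4}p^2(p-1)\,y^{\frac{n+1}{2}}
  \Bigl(\frac{(n+1)Q}{L}-\frac{2}{L^2}\Bigr)>0
\end{equation*}
to $\mathcal R[h]$. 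The first-derivative residual has the bad sign and exceeds the favourable $2/L^2$ term by a factor of order $QL$.

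The other two pieces also contribute positively:
\begin{itemize}
  \item the $\rho^{-1/2}y^{1/2}$ correction, whose leading coefficient $p\beta(n-p-p(p-1)\beta)$ at $\beta=1/2$ is positive;
  \item the amplitude $(\log\rho)^{-1/4}$.
\end{itemize}
So for large $\rho$ one has $\Deltap h>0$ for every $0<y<1$. Consequently:
\begin{itemize}
  \item there is no analogue of \eqref{eq:strict-spatial-sign} with a negative term;
  \item there is no confinement like \eqref{eq:strict-confinement};
  \item your Region~II step (``positivity of $\Deltap h$ forces $y$ to be small'') has nothing to work with.
\end{itemize}
Adjusting the constant $1$ in $1+\log(1/y)$ does not help. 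The sign comes from $g'>0$, that is, from the growth of the logarithmic factor as $y\downarrow0$, and that growth is exactly what the argument needs. You leave open ``or that it is dominated'', but your case analysis is built on the sign.

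\textbf{The missing idea.} Drop sign arguments entirely and prove $|\Deltap h|/h_t\to0$ uniformly in $0<y<1$. Pair each term of $|\mathcal R[h]|$ with suitable pieces of the lower bounds for $h_t$ and $rh_r$, recalling that $p-2<0$.
\begin{itemize}
  \item \emph{Second correction.} The cancellation leaves only $y^{\frac{n+1}{2}}Q/\sqrt{\log\rho}$ in $\mathcal R$. Use $h_t\ge c\,r^{-p}(\log\rho)^{-1/4}y^{\frac{n-1}{2}}Q^2$ and $rh_r\ge c(\log\rho)^{-1/4}y^{\frac{n+1}{2}}Q^2$. The identity $\frac{n+1}{2}(2-p)=1$ removes every power of $y$ and leaves $C(\log\rho)^{-p/4}Q^{2p-5}\le C(\log\rho)^{-p/4}$.
  \item \emph{The $\rho^{-1/2}y^{1/2}$ term.} The ratio is at most $C(\log\rho)^{\frac{2-p}{4}}Q^{-2(2-p)}$. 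The loss $(\log\rho)^{\frac{2-p}{4}}$ from the slowly decaying amplitude is beaten only because $Q^2\ge c\log\rho$ when $y\le\rho^{-1/(2n)}$.
  \item \emph{The region $y>\rho^{-1/(2n)}$.} Here the second correction exceeds the first by a factor $\rho^{1/4}$, which gives the bound $C(\log\rho)^{\frac{2-p}{4}}\rho^{-1/4}$.
  \item \emph{The $(\rho\log\rho)^{-1}$ term.} For $y\le\rho^{-1}$ use $rh_r\ge c(\log\rho)^{-1/4}(\rho\log\rho)^{-1}$. For larger $y$ it is dominated by the $\rho^{-1/2}y^{1/2}$ term.
\end{itemize}
With this domination argument in place of the confinement step, your amplification and pasting go through as you describe.
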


At $p=p_*$, the bounds
$\frac{n-p}{p(p-1)}<\beta_2<\frac{1}{2-p}$ used in Step 2 of the proof of
\cref{thm:singular-strict} meet at $\frac{n+1}{2}$.  Thus the pure-power
choice of $\beta_2$ in that proof is unavailable.  With $y$ and $\rho$
defined in \eqref{eq:singular-coordinates}, we instead use the two
corrections in \eqref{eq:critical-h},
\begin{equation*}
  \rho^{-\nicefrac{1}{2}}y^{\nicefrac{1}{2}}
  \quad\text{and}\quad
  y^{\frac{n+1}{2}}
  \left(1+\frac{1+\log(1/y)}{\sqrt{\log\rho}}\right)^2.
\end{equation*}
The exponent $(n+1)/2$ makes the leading contribution of
$y^{\frac{n+1}{2}}$ to $\Deltap h$ vanish.  The logarithmic factor in
the second correction makes the remaining spatial contribution small
relative to $h_t$.  For $y\le\rho^{-1}$, the
$\rho^{-\nicefrac{1}{2}}y^{\nicefrac{1}{2}}$ correction supplies a lower bound for $h_t$.
For $\rho^{-1}<y\le\rho^{-\frac{1}{2n}}$, the squared logarithmic factor is
at least a constant times $\log\rho$.  For $y>\rho^{-\frac{1}{2n}}$, the
correction containing $y^{\frac{n+1}{2}}$ and the squared logarithmic
factor exceeds the $\rho^{-\nicefrac{1}{2}}y^{\nicefrac{1}{2}}$ correction by
at least a factor $\rho^{\nicefrac{1}{4}}$.  These three regions are shown in
\cref{fig:double-critical-geometry}.

\begin{figure}[H]
  \centering
  \begin{tikzpicture}[x=1.6cm,y=1.6cm,>=stealth,
      every node/.style={font=\small}]
    \def\Rmax{4.8}
    \def\xmin{0.08}

    \path[fill=black!4]
    (0,0)--(\Rmax,0)
    -- plot[smooth,domain=\Rmax:\xmin,samples=90]
    (\x,{-1.55*(\x/\Rmax)^2/
        (2+ln(\Rmax/\x))})
    -- cycle;

    \path[fill=black!10]
    (0,0)
    -- plot[smooth,domain=\xmin:\Rmax,samples=90]
    (\x,{-1.55*(\x/\Rmax)^2/
        (2+ln(\Rmax/\x))})
    -- plot[smooth,domain=\Rmax:\xmin,samples=90]
    (\x,{-1.55*(\x/\Rmax)^2/
        (2+ln(\Rmax/\x))^0.25})
    -- cycle;

    \path[fill=black!16]
    (0,0)
    -- plot[smooth,domain=\xmin:\Rmax,samples=90]
    (\x,{-1.55*(\x/\Rmax)^2/
        (2+ln(\Rmax/\x))^0.25})
    -- plot[smooth,domain=\Rmax:\xmin,samples=90]
    (\x,{-1.55*(\x/\Rmax)^2})
    -- cycle;

    \draw[thick]
    (0,0)
    -- plot[smooth,domain=\xmin:\Rmax,samples=90]
    (\x,{-1.55*(\x/\Rmax)^2});

    \draw[semithick]
    (0,0)
    -- plot[smooth,domain=\xmin:\Rmax,samples=90]
    (\x,{-1.55*(\x/\Rmax)^2/
        (2+ln(\Rmax/\x))^0.25});

    \draw[semithick]
    (0,0)
    -- plot[smooth,domain=\xmin:\Rmax,samples=90]
    (\x,{-1.55*(\x/\Rmax)^2/
        (2+ln(\Rmax/\x))});

    \draw[thick] (\Rmax,-1.55)--(\Rmax,0);

    \draw[->] (0,-1.78)--(0,0.42) node[above] {$t$};
    \draw[->] (0,0)--(5.22,0) node[right] {$r=|x|$};
    \fill (0,0) circle (1.2pt) node[above left] {$(0,0)$};

    \node[above] at (4.12,0) {$t=0$ (top)};

    \node[font=\small] at (4.05,-0.20) {I};

    \node[font=\small] at (4.05,-0.68) {II};

    \node[font=\scriptsize] at (4.55,-1.27) {III};

  \end{tikzpicture}

  \caption{The three regions in \cref{thm:double-critical}, separated by
  $t=-r^p/\log(1/r)$ and $t=-r^p/[\log(1/r)]^{\frac{1}{2n}}$.
  The curves are schematic.}
  \label{fig:double-critical-geometry}
\end{figure}

\begin{proof}
  \noindent\emph{Step 1: Construct $h$ and estimate its derivatives.}
  Let $\varepsilon>0$, to be fixed below. For $\rho>1$, set
  \begin{equation}\label{eq:critical-h}
    h(r,t)=(\log\rho)^{-\nicefrac{1}{4}}
    \left[1-\varepsilon\rho^{-\nicefrac{1}{2}}y^{\nicefrac{1}{2}}
    -\varepsilon y^{\frac{n+1}{2}}
    \left(1+\frac{1+\log(1/y)}{\sqrt{\log\rho}}\right)^2\right].
  \end{equation}
  For $\log\rho\ge1$ and $0<y<1$, we have
  $\rho^{-\nicefrac{1}{2}}y^{\nicefrac{1}{2}}\le1$.  The log correction
  in \eqref{eq:critical-h} satisfies
  \begin{equation*}
    y^{\frac{n+1}{2}}
    \left(1+\frac{1+\log(1/y)}{\sqrt{\log\rho}}\right)^2
    \le y^{\frac{n+1}{2}}(2+\log(1/y))^2.
  \end{equation*}
  The right-hand side is bounded for $0<y<1$, since it tends to zero
  as $y\to0^+$.  Fix $\varepsilon>0$, depending only on $n$, so small
  that the bracket in
  \eqref{eq:critical-h} lies between $1/2$ and $1$.

  For all sufficiently large $\rho$, uniformly in $0<y<1$, we have
  \begin{equation}\label{eq:critical-time}
    h_t\ge\frac{c}{r^p(\log\rho)^{\nicefrac{1}{4}}}
    \left[\rho^{-\nicefrac{1}{2}}y^{-\nicefrac{1}{2}}
    +y^{\frac{n-1}{2}}
    \left(1+\frac{1+\log(1/y)}{\sqrt{\log\rho}}\right)^2\right]>0,
  \end{equation}
  and
  \begin{equation}\label{eq:critical-radial-bound}
    rh_r\ge\frac{c}{(\log\rho)^{\nicefrac{1}{4}}}
    \left[\frac1{\rho\log\rho}+\rho^{-\nicefrac{1}{2}}y^{\nicefrac{1}{2}}
    +y^{\frac{n+1}{2}}
    \left(1+\frac{1+\log(1/y)}{\sqrt{\log\rho}}\right)^2\right]>0.
  \end{equation}
  The $p$-Laplacian of $h$ satisfies
  \begin{equation}\label{eq:critical-spatial}
    |\Deltap h|\le
    \frac{Cr^{-p}|rh_r|^{p-2}}{(\log\rho)^{\nicefrac{1}{4}}}
    \left[\frac1{\rho\log\rho}+\rho^{-\nicefrac{1}{2}}y^{\nicefrac{1}{2}}
    +\frac{y^{\frac{n+1}{2}}\bigl(\sqrt{\log\rho}+1+\log(1/y)\bigr)}{\log\rho}
    \right].
  \end{equation}
  The constants $c,C>0$ in \eqref{eq:critical-time}--\eqref{eq:critical-spatial} depend only on $n$ and the fixed
  $\varepsilon$.  These three estimates hold uniformly for $0<y<1$
  in $\Theta_{p,1}$ when $\rho$ is sufficiently large, as proved in
  \cref{app:critical-derivatives}.  Since $p=p_*$,
  \begin{equation*}
    n-p=p(p-1)\frac{n+1}{2},
  \end{equation*}
  so the leading contribution of $y^{\frac{n+1}{2}}$ to $\Deltap h$
  vanishes.  The first and third terms in \eqref{eq:critical-spatial} bound the
  remaining contribution of the logarithmic correction, as shown in
  \cref{app:critical-derivatives}.

  \smallskip

  \noindent\emph{Step 2: Compare $|\Deltap h|$ with $h_t$.}
  Dividing \eqref{eq:critical-spatial} by $h_t>0$ displays the three
  contributions to be estimated:
  \begin{align}
    \label{eq:main-log}
    \frac{|\Deltap h|}{h_t}
    \le
    \frac{Cr^{-p}|rh_r|^{p-2}}{h_t(\log\rho)^{\nicefrac{1}{4}}}
    \left[
    \frac1{\rho\log\rho}
    +\rho^{-\nicefrac{1}{2}}y^{\nicefrac{1}{2}}
    +y^{\frac{n+1}{2}}
    \frac{\sqrt{\log\rho}+1+\log(1/y)}{\log\rho}
    \right].
  \end{align}
  We use the lower bounds for $h_t$ and $rh_r$ in
  \eqref{eq:critical-time} and \eqref{eq:critical-radial-bound}.
  Because $p-2<0$, the lower bound for $rh_r$ gives an upper bound
  for $|rh_r|^{p-2}$.

  \smallskip
  \noindent\emph{Step 2a: The $y^{\frac{n+1}{2}}$ contribution.}
  Using the $y^{\frac{n-1}{2}}$ term in \eqref{eq:critical-time} and the
  $y^{\frac{n+1}{2}}$ term in \eqref{eq:critical-radial-bound}, the
  appendix estimate \eqref{eq:critical-contribution-appendix} gives
  \begin{equation}\label{eq:critical-contribution}
    \frac{Cr^{-p}|rh_r|^{p-2}}{h_t(\log\rho)^{\nicefrac{1}{4}}}
    y^{\frac{n+1}{2}}
    \frac{\sqrt{\log\rho}+1+\log(1/y)}{\log\rho}
    \le C(\log\rho)^{-\nicefrac{p}{4}}.
  \end{equation}
  Thus the third contribution in \eqref{eq:main-log} tends to zero
  uniformly for $0<y<1$.

  For $y>\rho^{-1}$ and sufficiently large $\rho$, the first term in
  the brackets of \eqref{eq:main-log} is bounded by the second:
  \begin{equation}\label{eq:critical-radial-dominated}
    \frac1{\rho\log\rho}\le\rho^{-1}
    <\rho^{-\nicefrac{1}{2}}y^{\nicefrac{1}{2}}.
  \end{equation}
  Both terms have the same positive prefactor in \eqref{eq:main-log},
  so the first contribution is no larger than the second whenever
  $y>\rho^{-1}$.

  We estimate the contributions of $1/(\rho\log\rho)$ and
  $\rho^{-\nicefrac{1}{2}}y^{\nicefrac{1}{2}}$ in the three regions of
  \cref{fig:double-critical-geometry}.

  \noindent\emph{Step 2b: Case 1, very near the top (Region I in
    \cref{fig:double-critical-geometry}).}
  Suppose that $0<y\le\rho^{-1}$.  We first estimate the contribution
  of $\rho^{-\nicefrac{1}{2}}y^{\nicefrac{1}{2}}$ in \eqref{eq:main-log}.  Since $\rho>1$,
  we have $y\le\rho^{-1}\le\rho^{-\frac{1}{2n}}$, so
  \eqref{eq:critical-lower-small-denominator} applies.  Moreover,
  $\log(1/y)\ge\log\rho$, and hence
  \begin{equation*}
    \left(1+\frac{1+\log(1/y)}{\sqrt{\log\rho}}\right)^2
    \ge\log\rho.
  \end{equation*}
  Combining the lower bound for the squared logarithmic factor with
  \eqref{eq:lower-contribution} and
  \eqref{eq:critical-lower-small-denominator} gives, for the full contribution
  of $\rho^{-\nicefrac{1}{2}}y^{\nicefrac{1}{2}}$ in \eqref{eq:main-log},
  \begin{equation}\label{eq:critical-lower-small-y}
    \frac{Cr^{-p}|rh_r|^{p-2}}{h_t(\log\rho)^{\nicefrac{1}{4}}}
    \rho^{-\nicefrac{1}{2}}y^{\nicefrac{1}{2}}
    \le
    C(\log\rho)^{\frac{2-p}{4}}
    \left(1+\frac{1+\log(1/y)}{\sqrt{\log\rho}}\right)^{-2(2-p)}
    \le C(\log\rho)^{-\frac{3(2-p)}4}.
  \end{equation}
  We next estimate the contribution of $1/(\rho\log\rho)$ in
  \eqref{eq:main-log}.  The condition $y\le\rho^{-1}$ also gives
  \begin{equation*}
    \rho^{\nicefrac{1}{2}}y^{\nicefrac{1}{2}}\le1.
  \end{equation*}
  Combining $\rho^{\nicefrac{1}{2}}y^{\nicefrac{1}{2}}\le1$ with
  \eqref{eq:critical-radial-appendix} gives
  \begin{equation}\label{eq:critical-radial-contribution}
    \begin{aligned}
      \frac{Cr^{-p}|rh_r|^{p-2}}{h_t(\log\rho)^{\nicefrac{1}{4}}}
      \frac1{\rho\log\rho}
       & \le C(\log\rho)^{\frac{2-p}{4}}(\rho\log\rho)^{-(p-1)}
      \rho^{\nicefrac{1}{2}}y^{\nicefrac{1}{2}}
      \\
       & \le C(\log\rho)^{\frac{2-p}{4}}(\rho\log\rho)^{-(p-1)}.
    \end{aligned}
  \end{equation}
  The bounds \eqref{eq:critical-lower-small-y} and
  \eqref{eq:critical-radial-contribution} tend to zero uniformly for
  $0<y\le\rho^{-1}$.

  \smallskip
  \noindent\emph{Step 2c: Case 2, intermediate region (Region II in
    \cref{fig:double-critical-geometry}).}
  Suppose that $\rho^{-1}<y\le\rho^{-\frac{1}{2n}}$.  We first estimate the
  contribution of $\rho^{-\nicefrac{1}{2}}y^{\nicefrac{1}{2}}$ in \eqref{eq:main-log}.
  The condition $y\le\rho^{-\frac{1}{2n}}$ gives
  $1+\log(1/y)\ge(\log\rho)/(2n)$, and hence
  \begin{equation}\label{eq:critical-intermediate-log-gain}
    \left(1+\frac{1+\log(1/y)}{\sqrt{\log\rho}}\right)^2
    \ge\frac{\log\rho}{4n^2}.
  \end{equation}
  The first inequality in \eqref{eq:critical-lower-small-y} follows
  from \eqref{eq:lower-contribution} and
  \eqref{eq:critical-lower-small-denominator} and remains valid
  for $\rho^{-1}<y\le\rho^{-\frac{1}{2n}}$.  Together with
  \eqref{eq:critical-intermediate-log-gain}, it bounds the full
  contribution of $\rho^{-\nicefrac{1}{2}}y^{\nicefrac{1}{2}}$ in \eqref{eq:main-log} by
  $C(\log\rho)^{-\frac{3(2-p)}4}$, uniformly for
  $\rho^{-1}<y\le\rho^{-\frac{1}{2n}}$.

  By \eqref{eq:critical-radial-dominated}, the contribution of
  $1/(\rho\log\rho)$ is no larger than the preceding one.  Thus both
  contributions tend to zero uniformly for
  $\rho^{-1}<y\le\rho^{-\frac{1}{2n}}$.

  \smallskip
  \noindent\emph{Step 2d: Case 3, farther below the top (Region III in
    \cref{fig:double-critical-geometry}).}
  Suppose that $\rho^{-\frac{1}{2n}}<y<1$.  To bound the contribution of
  $\rho^{-\nicefrac{1}{2}}y^{\nicefrac{1}{2}}$ in \eqref{eq:main-log}, we first compare the
  two corrections in \eqref{eq:critical-h}:
  \begin{equation}\label{eq:critical-power-dominance}
    \frac{y^{\frac{n+1}{2}}
    \left(1+\dfrac{1+\log(1/y)}{\sqrt{\log\rho}}\right)^2}
    {\rho^{-\nicefrac{1}{2}}y^{\nicefrac{1}{2}}}
    \ge\rho^{\nicefrac{1}{2}}y^{\nicefrac{n}{2}}>\rho^{\nicefrac{1}{4}}.
  \end{equation}
  The first inequality uses that the squared logarithmic factor in
  \eqref{eq:critical-h} is at least one, while the second uses
  $y>\rho^{-\frac{1}{2n}}$.
  Combining \eqref{eq:lower-contribution} and
  \eqref{eq:critical-lower-large-denominator} bounds the contribution
  of $\rho^{-\nicefrac{1}{2}}y^{\nicefrac{1}{2}}$ in \eqref{eq:main-log}.  Then
  \eqref{eq:critical-power-dominance} gives
  \begin{equation}\label{eq:critical-lower-large-y}
    \frac{Cr^{-p}|rh_r|^{p-2}}{h_t(\log\rho)^{\nicefrac{1}{4}}}
    \rho^{-\nicefrac{1}{2}}y^{\nicefrac{1}{2}}
    \le
    C(\log\rho)^{\frac{2-p}{4}}\rho^{-\nicefrac{1}{2}}y^{-\nicefrac{n}{2}}
    \le C(\log\rho)^{\frac{2-p}{4}}\rho^{-\nicefrac{1}{4}}.
  \end{equation}
  The last bound in \eqref{eq:critical-lower-large-y} tends to zero
  uniformly for $\rho^{-\frac{1}{2n}}<y<1$.
  Since $\rho^{-\frac{1}{2n}}>\rho^{-1}$ for $\rho>1$, the Case~3
  assumption implies $y>\rho^{-1}$.  By
  \eqref{eq:critical-radial-dominated}, the contribution of
  $1/(\rho\log\rho)$ is no larger than the one bounded in
  \eqref{eq:critical-lower-large-y}.  Thus both contributions tend to
  zero uniformly for $\rho^{-\frac{1}{2n}}<y<1$.

  \smallskip

  \noindent\emph{Step 3: Fix a local supersolution.}
  The third contribution in \eqref{eq:main-log} tends to zero uniformly
  by \eqref{eq:critical-contribution}.  The second does so in Region I
  by \eqref{eq:critical-lower-small-y}, in Region II by
  \eqref{eq:critical-lower-small-denominator} and
  \eqref{eq:critical-intermediate-log-gain}, and in Region III by
  \eqref{eq:critical-lower-large-y}.  The first contribution tends to
  zero in Region I by \eqref{eq:critical-radial-contribution} and is
  bounded by the second in Regions II and III by
  \eqref{eq:critical-radial-dominated}.
  Hence \eqref{eq:main-log} gives
  \begin{equation}\label{eq:critical-ratio-limit}
    \frac{|\Deltap h|}{h_t}\to0
    \qquad\text{as }\rho\to\infty\text{ uniformly for }0<y<1.
  \end{equation}
  Choose $\rho_0$ so large that \eqref{eq:critical-time},
  \eqref{eq:critical-radial-bound}, and
  \eqref{eq:critical-ratio-limit} give
  \begin{equation}\label{eq:critical-local-supersolution}
    h_t-\Deltap h\ge0,\qquad h_t>0,\qquad h_r>0
    \quad\text{in }\Theta_{p,1}\cap\{\rho>\rho_0\}.
  \end{equation}
  The choice of $\varepsilon$ after \eqref{eq:critical-h} also gives
  \begin{equation}\label{eq:double-critical-profile-bounds}
    \tfrac12(\log\rho)^{-\nicefrac{1}{4}}\le h\le(\log\rho)^{-\nicefrac{1}{4}}
    \qquad(\rho\ge\rho_0,\ 0\le y\le1).
  \end{equation}

  \smallskip

  \noindent\emph{Step 4: Amplify and paste.}
  Set $c_0:=\tfrac12(\log\rho_0)^{-\nicefrac{1}{4}}$.  By
  \eqref{eq:double-critical-profile-bounds}, $h\ge c_0$ on
  $\overline{\Theta}_{p,1}\cap\{\rho=\rho_0\}$.  For each positive
  integer $j$, define
  \begin{equation*}
    w_j(x,t)=
    \begin{cases}
      \min\{jh(|x|,t),jc_0\}, & \rho>\rho_0,   \\
      jc_0,                   & \rho\le\rho_0.
    \end{cases}
  \end{equation*}
  On $\overline{\Theta}_{p,1}\cap\{\rho=\rho_0\}$, the inner branch
  $\min\{jh,jc_0\}$ equals the outer value $jc_0$.  By
  \eqref{eq:critical-local-supersolution}, the amplification identity
  \eqref{eq:singular-amplification}, and the pasting lemma
  \cite[Lemma~2.9]{BBGP15}, each $w_j$ is positive and
  $p$-superparabolic in $\Theta_{p,1}$.

  \smallskip

  \noindent\emph{Step 5: Verify the barrier-family conditions.}
  Near the origin,
  \eqref{eq:double-critical-profile-bounds} gives
  $0<w_j\le j(\log\rho)^{-\nicefrac{1}{4}}\to0$ as $(x,t)\to(0,0)$ within
  $\Theta_{p,1}$.
  For each integer $k\ge1$, set
  $K_k:=\{\xi\in\partial\Theta_{p,1}:|\xi|\ge1/k\}$.
  The formula for $w_1$ and \eqref{eq:double-critical-profile-bounds}
  give it a continuous positive trace on the compact set $K_k$.
  Set $c_k:=\min_{\xi\in K_k}w_1(\xi)>0$ and choose an integer
  $j\ge k/c_k$.  Since
  $w_j=jw_1$, we have
  \begin{equation*}
    \liminf_{\Theta_{p,1}\ni\zeta\to\xi}w_j(\zeta)
    =j w_1(\xi)\ge k
    \qquad\text{for every }\xi\in K_k.
  \end{equation*}
  Thus $(w_j)_{j\ge1}$ satisfies \cref{def:barrier-family} in
  $\Theta_{p,1}$ at the origin.  The barrier-family characterization
  \cite[Theorem~3.3]{BBGP15} proves the theorem.
\end{proof}

The preceding results and the known cases give the classification.
\begin{proof}[Proof of \cref{thm:classification}]
  If $1<p<p_*$, \cite[Proposition~1.4]{BB26} shows that the origin
  is irregular for $l\le p$, while \cite[Proposition~1.2]{BB26} gives
  regularity for $l>p$.  Suppose next that
  $p_*\le p<2$.  By \cref{thm:irregularity-below}, the origin is irregular
  for $l<p$, while \cite[Proposition~1.2]{BB26} gives regularity for $l>p$.
  At the remaining endpoint $l=p$, \cref{thm:double-critical} applies when
  $p=p_*$, and \cref{thm:singular-strict} applies when $p_*<p<2$.
  The cases with $p=2$ are given by \cite[Proposition~1.5]{BB26}.

  For $2<p<n$, \cref{thm:irregularity-below} gives irregularity for
  $l<p$, \cref{thm:degenerate-endpoint} gives regularity at $l=p$, and
  regularity for $l>p$ follows from \cite[Proposition~4.2]{BBGP15}.
  For $0<l<p$ with $2<p=n$, \cref{thm:critical-dimension} gives regularity;
  \cref{thm:degenerate-endpoint} treats $l=p$, and
  \cite[Proposition~4.2]{BBGP15} treats $l>p$.  Finally,
  \cite[Proposition~1.3]{BB26} gives regularity for every $l>0$ when
  $p>n$.  This proves \cref{thm:classification}.
\end{proof}

\appendix

\section{Technical estimates for Section~5}
\label{app:section5-estimates}

Throughout this appendix, $r$, $\rho$, and $y$ are defined in
\eqref{eq:singular-coordinates}. Write $D:=r\partial_r$, so that
$D^2f=r(rf_r)_r$.

\subsection{The strict singular calculation}
\label{app:strict-calculation}

We establish \eqref{eq:strict-spatial-sign}, which states
that, for sufficiently large $\rho$ and uniformly for $0<y<1$,
\begin{equation*}
  \Deltap h\le
  r^{-p}|rh_r|^{p-2}\rho^{-a}
  \left[C\left(\rho^{-1}+\rho^{-d_1}y^{\beta_1}\right)
  -c\rho^{-d_2}y^{\beta_2}\right].
\end{equation*}
By  \eqref{eq:endpoint-radial},
\begin{equation*}
 \Deltap h=r^{-p}|rh_r|^{p-2}\mathcal R[h],\quad  \mathcal R[f]:=(p-1)D^2f+(n-p)Df.
\end{equation*}
Since $rh_r>0$ by \eqref{eq:strict-radial-bound}, the factor
$r^{-p}|rh_r|^{p-2}$ is positive. Thus
\eqref{eq:strict-spatial-sign} follows from
\eqref{eq:strict-spatial-bracket} below. Write \eqref{eq:strict-h} as
\begin{equation}\tag{\ref{eq:strict-h}$'$}\label{eq:strict-h-expanded}
  h=\rho^{-a}-\rho^{-a-d_1}y^{\beta_1}
  -\rho^{-a-d_2}y^{\beta_2}.
\end{equation}
We apply $\mathcal R$ to $\rho^{-a}$ and the two subtracted terms in
\eqref{eq:strict-h-expanded} by calculating
$\mathcal R[\rho^{-\lambda}y^\beta]$.

By \eqref{eq:singular-coordinates}, $D\rho=-1$ and $Dy=-py$. Hence
$D(\rho^{-\lambda}y^\beta)
=\rho^{-\lambda}y^\beta(\lambda/\rho-p\beta)$ for $\lambda>0$ and
$\beta\ge0$. Differentiating once more gives
\begin{equation}\label{eq:strict-mode-calculation}
  \begin{aligned}
    \mathcal R[\rho^{-\lambda}y^\beta]
     & =\rho^{-\lambda}y^\beta
    \left[(p-1)\left(\frac{\lambda}{\rho}-p\beta\right)^2
      +\frac{(p-1)\lambda}{\rho^2}
      +(n-p)\left(\frac{\lambda}{\rho}-p\beta\right)\right].
  \end{aligned}
\end{equation}
Write $\lambda_i=a+d_i$. Substituting
$(\lambda,\beta)=(\lambda_i,\beta_i)$ into
\eqref{eq:strict-mode-calculation} and using the minus sign in
\eqref{eq:strict-h-expanded} gives, for $i=1,2$,
\begin{equation*}
  \begin{aligned}
    \mathcal R[-\rho^{-\lambda_i}y^{\beta_i}]
    ={} & \rho^{-\lambda_i}y^{\beta_i}
    \biggl[p\beta_i\bigl(n-p-p(p-1)\beta_i\bigr)                 \\
        & \quad+\frac{\lambda_i\bigl(2p(p-1)\beta_i-(n-p)\bigr)}{\rho}
      -\frac{(p-1)(\lambda_i^2+\lambda_i)}{\rho^2}\biggr].
  \end{aligned}
\end{equation*}
By \eqref{eq:strict-power-signs},
$p\beta_i(n-p-p(p-1)\beta_i)$ is positive for $i=1$ and strictly
negative for $i=2$. The terms proportional to $\rho^{-1}$ and
$\rho^{-2}$ tend to zero, so the coefficient of
$\rho^{-\lambda_1}y^{\beta_1}$ is bounded above and the coefficient
of $\rho^{-\lambda_2}y^{\beta_2}$ is bounded above by a negative
constant. For $\rho^{-a}$, \eqref{eq:strict-mode-calculation} gives
\begin{equation*}
  \mathcal R[\rho^{-a}]
  =\rho^{-a}\left[
    \frac{(n-p)a}{\rho}+\frac{(p-1)(a^2+a)}{\rho^2}
    \right]
  \le C\rho^{-a-1}.
\end{equation*}
Combining the estimates for $\mathcal R[\rho^{-a}]$ and
$\mathcal R[-\rho^{-\lambda_i}y^{\beta_i}]$, $i=1,2$, gives
\begin{equation}\label{eq:strict-spatial-bracket}
  \mathcal R[h]\le \rho^{-a}
  \left[C\left(\rho^{-1}+\rho^{-d_1}y^{\beta_1}\right)
  -c\rho^{-d_2}y^{\beta_2}\right].
\end{equation}
Multiplying \eqref{eq:strict-spatial-bracket} by
$r^{-p}|rh_r|^{p-2}>0$ in \eqref{eq:endpoint-radial} gives
\eqref{eq:strict-spatial-sign}.

\subsection{The double-critical derivative estimates}
\label{app:critical-derivatives}

For the function $h$ in
\eqref{eq:critical-h}, that is,
\begin{equation*}
  h(r,t)=(\log\rho)^{-\nicefrac{1}{4}}
  \left[1-\varepsilon\rho^{-\nicefrac{1}{2}}y^{\nicefrac{1}{2}}
  -\varepsilon y^{\frac{n+1}{2}}
  \left(1+\frac{1+\log(1/y)}{\sqrt{\log\rho}}\right)^2\right],
\end{equation*}
we establish \eqref{eq:critical-time}, \eqref{eq:critical-radial-bound},
and \eqref{eq:critical-spatial}. In the current notation they read as %
\begin{align*}
  h_t&\ge\frac{c}{r^p(\log\rho)^{\nicefrac{1}{4}}}
  \left[\rho^{-\nicefrac{1}{2}}y^{-\nicefrac{1}{2}}
  +y^{\frac{n-1}{2}}
  \left(1+\frac{1+\log(1/y)}{\sqrt{\log\rho}}\right)^2\right]>0,\\
rh_r=  Dh&\ge\frac{c}{(\log\rho)^{\nicefrac{1}{4}}}
  \left[\frac1{\rho\log\rho}+\rho^{-\nicefrac{1}{2}}y^{\nicefrac{1}{2}}
  +y^{\frac{n+1}{2}}
  \left(1+\frac{1+\log(1/y)}{\sqrt{\log\rho}}\right)^2\right]>0,
\end{align*}
and
\begin{equation*}
  |\Deltap h|=r^{-p}|Dh|^{p-2}|\mathcal R[h]|\le
  \frac{Cr^{-p}|rh_r|^{p-2}}{(\log\rho)^{\nicefrac{1}{4}}}
  \left[\frac1{\rho\log\rho}+\rho^{-\nicefrac{1}{2}}y^{\nicefrac{1}{2}}
  +\frac{y^{\frac{n+1}{2}}\bigl(\sqrt{\log\rho}+1+\log(1/y)\bigr)}
  {\log\rho}\right].
\end{equation*}

To prove the first estimate, \eqref{eq:critical-time}, we apply the chain
rule to \eqref{eq:critical-h}. For the last term in
\eqref{eq:critical-h}, we need a lower bound for the $y$-derivative of
its positive $y$-dependent factor. We first differentiate its logarithm,
which turns the product into a sum, and then recover the derivative of
the factor itself:
\begin{equation*}
  y\partial_y\log\left[y^{\frac{n+1}{2}}
    \left(1+\frac{1+\log(1/y)}{\sqrt{\log\rho}}\right)^2\right]
  =\frac{n+1}{2}-\frac{2}{\sqrt{\log\rho}+1+\log(1/y)}\ge c>0.
\end{equation*}
By the chain rule, $\partial_y f(y)=f(y)\partial_y\log f(y)$ for
$f(y)>0$. Thus, multiplying this lower bound by the positive
expression inside the logarithm and dividing by $y$ gives
\begin{equation*}
  \partial_y\left[y^{\frac{n+1}{2}}
    \left(1+\frac{1+\log(1/y)}{\sqrt{\log\rho}}\right)^2\right]
  \ge c\,y^{\frac{n-1}{2}}
    \left(1+\frac{1+\log(1/y)}{\sqrt{\log\rho}}\right)^2.
\end{equation*}
For the second term in \eqref{eq:critical-h}, we use
$y\partial_y(\rho^{-\nicefrac{1}{2}}y^{\nicefrac{1}{2}})
=\frac12\rho^{-\nicefrac{1}{2}}y^{\nicefrac{1}{2}}$.
Since $y_t=-r^{-p}$ by \eqref{eq:singular-coordinates}, both
subtracted terms in \eqref{eq:critical-h} contribute positively to
$h_t$. The second and last terms of $h$ give the first and second
terms, respectively, in the brackets in \eqref{eq:critical-time},
proving that estimate.

To prove the second estimate, \eqref{eq:critical-radial-bound}, we
differentiate \eqref{eq:critical-h}, reusing the same chain-rule
identity in the form $Df=fD\log f$ for positive $f$. For the
prefactor and the factors inside the second and last terms,
respectively, we obtain
\begin{equation}\label{eq:critical-log-derivatives}
\begin{aligned}
  D\log(\log\rho)^{-\nicefrac{1}{4}}
   &=\frac1{4\rho\log\rho},\\
  D\log(\rho^{-\nicefrac{1}{2}}y^{\nicefrac{1}{2}})
   &=-\frac p2+\frac1{2\rho},\\
  D\log\left[y^{\frac{n+1}{2}}
    \left(1+\frac{1+\log(1/y)}{\sqrt{\log\rho}}\right)^2\right]
   &=-\frac{p(n+1)}2+\frac{2p}{\sqrt{\log\rho}+1+\log(1/y)}\\
   &\quad+\frac{1+\log(1/y)}{\rho\log\rho
     (\sqrt{\log\rho}+1+\log(1/y))}.
\end{aligned}
\end{equation}
For sufficiently large $\rho$, the second and third expressions in
\eqref{eq:critical-log-derivatives} are bounded above by negative
constants, whereas the first is positive. The bracket in
\eqref{eq:critical-h} is at least $1/2$. Thus the derivative of
$(\log\rho)^{-\nicefrac{1}{4}}$ supplies the first term,
$(\rho\log\rho)^{-1}$, in the brackets in
\eqref{eq:critical-radial-bound}. The second and third derivatives in
\eqref{eq:critical-log-derivatives}, together with the minus signs in
\eqref{eq:critical-h}, supply the second and third terms there.
This proves \eqref{eq:critical-radial-bound}.

It remains to prove \eqref{eq:critical-spatial}, and we do this by estimating
$\mathcal R[h]$ term by term.
We begin with the last term of $h$ in \eqref{eq:critical-h}, expanding
its square:
\begin{equation}\label{eq:critical-expanded-square}
  \begin{aligned}
    (\log\rho)^{-\nicefrac{1}{4}}y^{\frac{n+1}{2}}
    \left(1+\frac{1+\log(1/y)}{\sqrt{\log\rho}}\right)^2
    &=y^{\frac{n+1}{2}}\bigl[(\log\rho)^{-\nicefrac{1}{4}}\\
    &\quad+2(\log\rho)^{-\nicefrac{3}{4}}(1+\log(1/y))\\
    &\quad+(\log\rho)^{-\nicefrac{5}{4}}(1+\log(1/y))^2\bigr].
  \end{aligned}
\end{equation}
We apply $\mathcal R$ separately to the three summands on the
right-hand side of \eqref{eq:critical-expanded-square}.
For these summands, take $(\kappa,k)=(1/4,0),(3/4,1),(5/4,2)$,
respectively, and apply the product rule with
$a=(\log\rho)^{-\kappa}$ and
$b=y^{\frac{n+1}{2}}(1+\log(1/y))^k$:
\begin{equation}\label{eq:critical-product-rule}
\mathcal R[ab]
  =a\mathcal R[b]+2(p-1)(Da)(Db)+b\mathcal R[a].
\end{equation}
We first estimate $a\mathcal R[b]$, the first term in
\eqref{eq:critical-product-rule}, in which $a$ is not differentiated.
At $p=p_*$, the identity $n-p=p(p-1)(n+1)/2$ gives for $\mathcal R[b]$
\begin{align}
\label{eq:critical-poly}
  \mathcal R&[y^{\frac{n+1}{2}}(1+\log(1/y))^k]\nonumber
  \\&=p^2(p-1)y^{\frac{n+1}{2}}
  \left[k(k-1)(1+\log(1/y))^{k-2}
    -\frac{n+1}{2}k(1+\log(1/y))^{k-1}\right].
\end{align}
For $k=0$, \eqref{eq:critical-poly} gives
$\mathcal R[y^{\frac{n+1}{2}}]=0$; for $k=1,2$, each term on its
right-hand side has a lower power of $1+\log(1/y)$ than the expression
being differentiated. Multiplying by the corresponding factors $a$ in
\eqref{eq:critical-expanded-square}, we bound the sum of the absolute
values of the $a\mathcal R[b]$ contributions by
\begin{equation*}
  \begin{aligned}
    &C(\log\rho)^{-\nicefrac{3}{4}}y^{\frac{n+1}{2}}
    +C(\log\rho)^{-\nicefrac{5}{4}}y^{\frac{n+1}{2}}(1+\log(1/y))\\
    &\quad=C(\log\rho)^{-\nicefrac{1}{4}}y^{\frac{n+1}{2}}
    \left(\frac1{\sqrt{\log\rho}}
      +\frac{1+\log(1/y)}{\log\rho}\right)\\
    &\quad=C(\log\rho)^{-\nicefrac{1}{4}}y^{\frac{n+1}{2}}
      \frac{\sqrt{\log\rho}+1+\log(1/y)}{\log\rho}.
  \end{aligned}
\end{equation*}
Here $C$ includes the fixed $\varepsilon$ from \eqref{eq:critical-h}.

We next estimate the second and third terms in
\eqref{eq:critical-product-rule}, namely $2(p-1)(Da)(Db)$ and
$b\mathcal R[a]$. Both contain derivatives of
$a=(\log\rho)^{-\kappa}$, for which we use
\begin{align*}
  D(\log\rho)^{-\kappa}
   &=\frac{\kappa(\log\rho)^{-\kappa}}{\rho\log\rho},\\
  D^2(\log\rho)^{-\kappa}
   &=\frac{\kappa(\log\rho)^{-\kappa}}{\rho^2\log\rho}
     \left(1+\frac{\kappa+1}{\log\rho}\right).
\end{align*}
For $Db$ in the second term of \eqref{eq:critical-product-rule},
the inequality $1+\log(1/y)\ge1$ also gives
$|D(y^{\frac{n+1}{2}}(1+\log(1/y))^k)|
\le C y^{\frac{n+1}{2}}(1+\log(1/y))^k$.
The second term is therefore bounded using $Da$ and $Db$, and the
third using $\mathcal R[a]=(p-1)D^2a+(n-p)Da$. Summing these bounds
over the three summands in \eqref{eq:critical-expanded-square} gives
an absolute contribution at most
\begin{equation*}
  \frac{C(\log\rho)^{-\nicefrac{1}{4}}}{\rho\log\rho}
  y^{\frac{n+1}{2}}\left(1+\frac{1+\log(1/y)}{\sqrt{\log\rho}}\right)^2
  \le\frac{C(\log\rho)^{-\nicefrac{1}{4}}}{\rho\log\rho}.
\end{equation*}
The bound
$y^{\frac{n+1}{2}}(1+(1+\log(1/y))/\sqrt{\log\rho})^2\le C$
used in the second inequality follows
from the estimate after \eqref{eq:critical-h}.

This completes the estimate for the contribution of the last term of
$h$ to $\mathcal R[h]$. It remains to estimate
the contributions of the first and second terms of $h$  to
$\mathcal R[h]=(p-1)D^2h+(n-p)Dh$ i.e.\ estimate $Dh,\ D^2h$ in the case of these terms.
The first, $(\log\rho)^{-\nicefrac{1}{4}}$, contributes at most
$C(\log\rho)^{-\nicefrac{1}{4}}/(\rho\log\rho)$ to $\mathcal R[h]$.
For the second term of $h$ in \eqref{eq:critical-h}, the product rule gives
\begin{equation*}
  D\left[(\log\rho)^{-\nicefrac{1}{4}}
    \rho^{-\nicefrac{1}{2}}y^{\nicefrac{1}{2}}\right]
  =(\log\rho)^{-\nicefrac{1}{4}}
    \rho^{-\nicefrac{1}{2}}y^{\nicefrac{1}{2}}
  \left(-\frac p2+\frac1{2\rho}+\frac1{4\rho\log\rho}\right).
\end{equation*}
Differentiating once more gives
\begin{equation*}
\begin{aligned}
  &D^2\left[(\log\rho)^{-\nicefrac{1}{4}}
    \rho^{-\nicefrac{1}{2}}y^{\nicefrac{1}{2}}\right]\\
  &\quad=(\log\rho)^{-\nicefrac{1}{4}}
    \rho^{-\nicefrac{1}{2}}y^{\nicefrac{1}{2}}
  \left[\left(-\frac p2+\frac1{2\rho}+\frac1{4\rho\log\rho}\right)^2
    +\frac1{2\rho^2}
    +\frac{\log\rho+1}{4\rho^2(\log\rho)^2}\right].
\end{aligned}
\end{equation*}
The factor in parentheses in the first identity and the factor in brackets
in the second are bounded for sufficiently large $\rho$.
By definition of  $\mathcal R$, these identities imply
\begin{equation*}
  \left|\mathcal R\left[-\varepsilon(\log\rho)^{-\nicefrac{1}{4}}
    \rho^{-\nicefrac{1}{2}}y^{\nicefrac{1}{2}}\right]\right|
  \le C(\log\rho)^{-\nicefrac{1}{4}}
    \rho^{-\nicefrac{1}{2}}y^{\nicefrac{1}{2}}.
\end{equation*}
Combining the bounds for the first, second, and last terms of $h$ in
\eqref{eq:critical-h} gives
\begin{equation}\label{eq:critical-R-bound}
  |\mathcal R[h]|
  \le C(\log\rho)^{-\nicefrac{1}{4}}
  \left[\frac1{\rho\log\rho}
    +\rho^{-\nicefrac{1}{2}}y^{\nicefrac{1}{2}}
    +y^{\frac{n+1}{2}}\frac{\sqrt{\log\rho}+1+\log(1/y)}{\log\rho}\right].
\end{equation}
Multiplying \eqref{eq:critical-R-bound} by
$r^{-p}|rh_r|^{p-2}$ proves \eqref{eq:critical-spatial}.

\subsection{The double-critical comparison estimates}
\label{app:critical-comparisons}

Dividing the upper bound \eqref{eq:critical-spatial} by $h_t>0$ gives
\begin{equation*}
  \frac{|\Deltap h|}{h_t}
  \le \frac{Cr^{-p}|rh_r|^{p-2}}{h_t(\log\rho)^{\nicefrac{1}{4}}}
  \left[\frac1{\rho\log\rho}
    +\rho^{-\nicefrac{1}{2}}y^{\nicefrac{1}{2}}
    +y^{\frac{n+1}{2}}
      \frac{\sqrt{\log\rho}+1+\log(1/y)}{\log\rho}\right].
\end{equation*}
This is \eqref{eq:main-log}. We now estimate the contributions on the right-hand side.
To bound this right-hand side, we use the lower bounds for $h_t$ and
$rh_r$ in \eqref{eq:critical-time} and \eqref{eq:critical-radial-bound}.
These give upper bounds for $1/h_t$ and $|rh_r|^{p-2}$, respectively,
since $p-2<0$. For each contribution, we retain suitable terms from
these lower bounds.
For the repeated logarithmic factor, write
\begin{equation*}
  Q:=1+\frac{1+\log(1/y)}{\sqrt{\log\rho}}.
\end{equation*}
It holds that  $Q\ge1$ and
$\sqrt{\log\rho}+1+\log(1/y)=\sqrt{\log\rho}\,Q$.
The bounds below hold uniformly for $0<y<1$ and sufficiently large
$\rho$.

\smallskip
\noindent\emph{Third contribution in \eqref{eq:main-log}.}
The third term in the brackets equals
$y^{\frac{n+1}{2}}Q/\sqrt{\log\rho}$. To estimate its full
contribution, we retain the last term in each of
\eqref{eq:critical-time} and \eqref{eq:critical-radial-bound}, giving
\begin{equation*}
  h_t\ge c r^{-p}(\log\rho)^{-\nicefrac{1}{4}}
    y^{\frac{n-1}{2}}Q^2,
  \qquad
  rh_r\ge c(\log\rho)^{-\nicefrac{1}{4}}
    y^{\frac{n+1}{2}}Q^2.
\end{equation*}
Since $p-2<0$, the
lower bound for $rh_r$ controls $|rh_r|^{p-2}$ from above.
Substituting these two lower bounds in the full contribution gives
\begin{equation}\label{eq:critical-contribution-appendix}
\begin{aligned}
  &\frac{Cr^{-p}|rh_r|^{p-2}}{h_t(\log\rho)^{\nicefrac{1}{4}}}
    \frac{y^{\frac{n+1}{2}}Q}{\sqrt{\log\rho}}\\
  &\quad\le C(\log\rho)^{-\nicefrac{p}{4}}
    y^{1-\frac{n+1}{2}(2-p)}Q^{2p-5}\\
  &\quad=C(\log\rho)^{-\nicefrac{p}{4}}Q^{2p-5}
    \le C(\log\rho)^{-\nicefrac{p}{4}}.
\end{aligned}
\end{equation}
Here $\frac{n+1}{2}(2-p)=1$ cancels the power of $y$, and
$2p-5<0$ together with $Q\ge1$ gives the last inequality.

\smallskip
\noindent\emph{First contribution in \eqref{eq:main-log}.}
To estimate the full contribution of $(\rho\log\rho)^{-1}$, we use
the $(\rho\log\rho)^{-1}$ term in
\eqref{eq:critical-radial-bound} and the
$\rho^{-\nicefrac{1}{2}}y^{-\nicefrac{1}{2}}$ term in
\eqref{eq:critical-time}. These give
\begin{equation*}
  rh_r\ge\frac{c}{(\log\rho)^{\nicefrac{1}{4}}\rho\log\rho},
  \qquad
  h_t\ge
  \frac{c r^{-p}\rho^{-\nicefrac{1}{2}}y^{-\nicefrac{1}{2}}}
    {(\log\rho)^{\nicefrac{1}{4}}}.
\end{equation*}
Since $p-2<0$, the $(\rho\log\rho)^{-1}$ contribution in
\eqref{eq:main-log} satisfies
\begin{equation}\label{eq:critical-radial-appendix}
  \frac{Cr^{-p}|rh_r|^{p-2}}
    {h_t(\log\rho)^{\nicefrac{1}{4}}\rho\log\rho}
  \le C(\log\rho)^{\frac{2-p}{4}}
    (\rho\log\rho)^{-(p-1)}
    \rho^{\nicefrac{1}{2}}y^{\nicefrac{1}{2}}.
\end{equation}
In Region I of \cref{fig:double-critical-geometry},
$y\le\rho^{-1}$ makes
$\rho^{\nicefrac{1}{2}}y^{\nicefrac{1}{2}}\le1$
in \eqref{eq:critical-radial-appendix}.

\smallskip
\noindent\emph{Second contribution in \eqref{eq:main-log}.}
For the full contribution of $\rho^{-\nicefrac{1}{2}}y^{\nicefrac{1}{2}}$,
we first obtain a common bound and then estimate its denominator in two
ways. The last two terms in the bracket in
\eqref{eq:critical-radial-bound} have sum
\begin{equation*}
  A:=\rho^{-\nicefrac{1}{2}}y^{\nicefrac{1}{2}}
    +y^{\frac{n+1}{2}}Q^2.
\end{equation*}
The two terms in the bracket in \eqref{eq:critical-time} have sum
$y^{-1}A$. Thus these estimates give
\begin{equation*}
  h_t\ge c r^{-p}(\log\rho)^{-\nicefrac{1}{4}}y^{-1}A,
  \qquad rh_r\ge c(\log\rho)^{-\nicefrac{1}{4}}A.
\end{equation*}
Since $p-2<0$, substituting these lower bounds in the full contribution gives
\begin{equation}\label{eq:lower-contribution}
  \frac{Cr^{-p}|rh_r|^{p-2}
    \rho^{-\nicefrac{1}{2}}y^{\nicefrac{1}{2}}}
    {h_t(\log\rho)^{\nicefrac{1}{4}}}
  \le
  \frac{C(\log\rho)^{\frac{2-p}{4}}
    y\,\rho^{-\nicefrac{1}{2}}y^{\nicefrac{1}{2}}}
    {A^{3-p}}.
\end{equation}

For use when $y\le\rho^{-\frac{1}{2n}}$ (Regions I and II of
\cref{fig:double-critical-geometry}), we retain a negative power of $Q$
in the bound. Split $A^{3-p}=A\cdot A^{2-p}$ and use
$A\ge\rho^{-\nicefrac{1}{2}}y^{\nicefrac{1}{2}}$
for the first factor and
$A\ge y^{\frac{n+1}{2}}Q^2$ for $A^{2-p}$.
Since $\frac{n+1}{2}(2-p)=1$, we obtain
\begin{align}
  \frac{C(\log\rho)^{\frac{2-p}{4}}
    y\,\rho^{-\nicefrac{1}{2}}y^{\nicefrac{1}{2}}}
    {A^{3-p}}
  &\le
  \frac{C(\log\rho)^{\frac{2-p}{4}}
    y\,\rho^{-\nicefrac{1}{2}}y^{\nicefrac{1}{2}}}
    {(\rho^{-\nicefrac{1}{2}}y^{\nicefrac{1}{2}})
     (y^{\frac{n+1}{2}}Q^2)^{2-p}}
    \notag\\
  &=C(\log\rho)^{\frac{2-p}{4}}Q^{-2(2-p)}.
  \label{eq:critical-lower-small-denominator}
\end{align}

\begin{samepage}
For use when $y>\rho^{-\frac{1}{2n}}$ (Region III of
\cref{fig:double-critical-geometry}), we instead seek a bound in terms
of powers of $\rho$ and $y$. We bound the whole denominator using
$A\ge y^{\frac{n+1}{2}}Q^2\ge y^{\frac{n+1}{2}}$.
The identity
$\frac32-\frac{n+1}{2}(3-p)=-\frac n2$ gives
\begin{equation}\label{eq:critical-lower-large-denominator}
  \frac{C(\log\rho)^{\frac{2-p}{4}}
    y\,\rho^{-\nicefrac{1}{2}}y^{\nicefrac{1}{2}}}
    {A^{3-p}}
  \le \frac{C(\log\rho)^{\frac{2-p}{4}}
    y\,\rho^{-\nicefrac{1}{2}}y^{\nicefrac{1}{2}}}
    {\bigl(y^{\frac{n+1}{2}}\bigr)^{3-p}}
  = C(\log\rho)^{\frac{2-p}{4}}
    \rho^{-\nicefrac{1}{2}}y^{-\nicefrac{n}{2}}.
\end{equation}
\end{samepage}

\renewcommand{\bibliofont}{\footnotesize\setlength{\baselineskip}{10pt}}
\bibliographystyle{alpha}
\bibliography{refs}

@article{BB26,
  author        = {Bj{\"o}rn, A. and Bj{\"o}rn, J.},
  title         = {Barriers, {Barenblatt} solutions and regularity of soda can
                   domains for the heat equation and nonlinear
                   {$p$}-parabolic equations},
  year          = {2026},
  journal       = {J. Evol. Equ.},
  note          = {To appear; arXiv:2604.27516},
  eprint        = {2604.27516},
  archiveprefix = {arXiv},
  url           = {https://arxiv.org/abs/2604.27516},
}

@article{BBG17,
  author  = {Bj{\"o}rn, A. and Bj{\"o}rn, J. and Gianazza, U.},
  title   = {The {Petrovski\u\i} criterion and barriers for degenerate and
             singular {$p$}-parabolic equations},
  journal = {Math. Ann.},
  volume  = {368},
  year    = {2017},
  pages   = {885--904},
  doi     = {10.1007/s00208-016-1415-0},
}

@article{BBGP15,
  author  = {Bj{\"o}rn, A. and Bj{\"o}rn, J. and Gianazza, U. and
             Parviainen, M.},
  title   = {Boundary regularity for degenerate and singular parabolic
             equations},
  journal = {Calc. Var. Partial Differential Equations},
  volume  = {52},
  year    = {2015},
  pages   = {797--827},
  doi     = {10.1007/s00526-014-0734-9},
}

@article{KL96,
  author  = {Kilpel{\"a}inen, T. and Lindqvist, P.},
  title   = {On the {Dirichlet} boundary value problem for a degenerate
             parabolic equation},
  journal = {SIAM J. Math. Anal.},
  volume  = {27},
  year    = {1996},
  pages   = {661--683},
  doi     = {10.1137/0527036},
}

@article{KKP10,
  author  = {Korte, R. and Kuusi, T. and Parviainen, M.},
  title   = {A connection between a general class of superparabolic functions
             and supersolutions},
  journal = {J. Evol. Equ.},
  volume  = {10},
  year    = {2010},
  pages   = {1--20},
  doi     = {10.1007/s00028-009-0037-3},
}

@article{Perron23,
  author  = {Perron, O.},
  title   = {{Eine neue Behandlung der ersten Randwertaufgabe f{\"u}r
             $\Delta u=0$}},
  journal = {Math. Z.},
  volume  = {18},
  year    = {1923},
  pages   = {42--54},
  doi     = {10.1007/BF01192395},
}

@article{Sternberg29,
  author  = {Sternberg, W.},
  title   = {{{\"U}ber die Gleichung der W{\"a}rmeleitung}},
  journal = {Math. Ann.},
  volume  = {101},
  year    = {1929},
  pages   = {394--398},
  doi     = {10.1007/BF01454850},
}

@article{Wiener24,
  author  = {Wiener, N.},
  title   = {The {Dirichlet} problem},
  journal = {J. Math. Phys.},
  volume  = {3},
  year    = {1924},
  pages   = {127--146},
  doi     = {10.1002/sapm192433127},
}

@article{Lan73,
  author  = {Lanconelli, E.},
  title   = {Sul problema di {Dirichlet} per l'equazione del calore},
  journal = {Ann. Mat. Pura Appl.},
  volume  = {97},
  year    = {1973},
  pages   = {83--114},
  doi     = {10.1007/BF02414910},
}

@article{EG82,
  author  = {Evans, L. C. and Gariepy, R. F.},
  title   = {{Wiener}'s criterion for the heat equation},
  journal = {Arch. Rational Mech. Anal.},
  volume  = {78},
  year    = {1982},
  pages   = {293--314},
  doi     = {10.1007/BF00249583},
}

@article{GLM86,
  author  = {Granlund, S. and Lindqvist, P. and Martio, O.},
  title   = {Note on the {PWB}-method in the non-linear case},
  journal = {Pacific J. Math.},
  volume  = {125},
  year    = {1986},
  pages   = {381--395},
  doi     = {10.2140/pjm.1986.125.381},
}

@article{Petrovskii35,
  author  = {Petrowsky, I.},
  title   = {{Zur ersten Randwertaufgabe der W{\"a}rmeleitungsgleichung}},
  journal = {Compos. Math.},
  volume  = {1},
  year    = {1935},
  pages   = {383--419},
  url     = {https://archive.numdam.org/item/CM_1935__1__383_0/},
}

@article{L95,
  author  = {Lindqvist, P.},
  title   = {A criterion of {Petrowsky}'s kind for a degenerate
             quasilinear parabolic equation},
  journal = {Rev. Mat. Iberoam.},
  volume  = {11},
  year    = {1995},
  pages   = {569--578},
  doi     = {10.4171/RMI/185},
}

@article{EK71,
  author  = {Effros, E. G. and Kazdan, J. L.},
  title   = {On the {Dirichlet} problem for the heat equation},
  journal = {Indiana Univ. Math. J.},
  volume  = {20},
  year    = {1971},
  pages   = {683--693},
  doi     = {10.1512/iumj.1971.20.20054},
}

@article{Abdulla05,
  author  = {Abdulla, U. G.},
  title   = {Multidimensional {Kolmogorov--Petrovsky} test for the
             boundary regularity and irregularity of solutions to the
             heat equation},
  journal = {Bound. Value Probl.},
  volume  = {2005},
  year    = {2005},
  pages   = {181--199},
  doi     = {10.1155/BVP.2005.181},
}

@book{DGV12,
  author    = {DiBenedetto, E. and Gianazza, U. and Vespri, V.},
  title     = {{Harnack}'s Inequality for Degenerate and Singular
               Parabolic Equations},
  series    = {Springer Monographs in Mathematics},
  publisher = {Springer},
  address   = {New York},
  year      = {2012},
  doi       = {10.1007/978-1-4614-1584-8},
}

@article{GLL18,
  author  = {Gianazza, U. and Liao, N. and Lukkari, T.},
  title   = {A boundary estimate for singular parabolic diffusion equations},
  journal = {NoDEA Nonlinear Differential Equations Appl.},
  volume  = {25},
  number  = {4},
  year    = {2018},
  note    = {Paper No. 33},
  doi     = {10.1007/s00030-018-0523-9},
}

@article{GL20,
  author  = {Gianazza, U. and Liao, N.},
  title   = {A boundary estimate for degenerate parabolic diffusion equations},
  journal = {Potential Anal.},
  volume  = {53},
  year    = {2020},
  pages   = {977--995},
  doi     = {10.1007/s11118-019-09794-8},
}

@article{GL22,
  author  = {Gianazza, U. and Liao, N.},
  title   = {A boundary estimate for singular sub-critical parabolic equations},
  journal = {Int. Math. Res. Not. IMRN},
  volume  = {2022},
  number  = {10},
  year    = {2022},
  pages   = {7332--7353},
  doi     = {10.1093/imrn/rnaa351},
}

@article{MS25,
  author  = {Moring, K. and Scheven, C.},
  title   = {On notions of {$p$}-parabolic capacity and applications},
  journal = {Potential Anal.},
  volume  = {63},
  year    = {2025},
  pages   = {895--946},
  doi     = {10.1007/s11118-025-10193-5},
}

@article{BG15,
  author  = {Banerjee, A. and Garofalo, N.},
  title   = {On the {Dirichlet} boundary value problem for the normalized
             {$p$}-{Laplacian} evolution},
  journal = {Commun. Pure Appl. Anal.},
  volume  = {14},
  number  = {1},
  year    = {2015},
  pages   = {1--21},
  doi     = {10.3934/cpaa.2015.14.1},
}

@article{BBP19,
  author  = {Bj{\"o}rn, A. and Bj{\"o}rn, J. and Parviainen, M.},
  title   = {The tusk condition and {Petrovski\u\i} criterion for the
             normalized {$p$}-parabolic equation},
  journal = {J. Lond. Math. Soc. (2)},
  volume  = {100},
  number  = {2},
  year    = {2019},
  pages   = {623--643},
  doi     = {10.1112/jlms.12224},
}

@misc{K24,
  author        = {Kurkinen, T.},
  title         = {Boundary regularity for a general nonlinear parabolic
                   equation in non-divergence form},
  year          = {2024},
  note          = {Preprint, arXiv:2404.12848},
  eprint        = {2404.12848},
  archiveprefix = {arXiv},
  url           = {https://arxiv.org/abs/2404.12848},
}

@article{KKKP13,
  author  = {Kinnunen, J. and Korte, R. and Kuusi, T. and Parviainen, M.},
  title   = {Nonlinear parabolic capacity and polar sets of superparabolic
             functions},
  journal = {Math. Ann.},
  volume  = {355},
  number  = {4},
  year    = {2013},
  pages   = {1349--1381},
  doi     = {10.1007/s00208-012-0825-x},
}

@article{AKP15,
  author  = {Avelin, B. and Kuusi, T. and Parviainen, M.},
  title   = {Variational parabolic capacity},
  journal = {Discrete Contin. Dyn. Syst.},
  volume  = {35},
  number  = {12},
  year    = {2015},
  pages   = {5665--5688},
  doi     = {10.3934/dcds.2015.35.5665},
}

@article{A16,
  author  = {Avelin, B.},
  title   = {On time dependent domains for the degenerate {$p$}-parabolic
             equation: {Carleson} estimate and {H{\"o}lder} continuity},
  journal = {Math. Ann.},
  volume  = {364},
  number  = {1--2},
  year    = {2016},
  pages   = {667--686},
  doi     = {10.1007/s00208-015-1226-8},
}

@article{AKN19,
  author  = {Avelin, B. and Kuusi, T. and Nystr{\"o}m, K.},
  title   = {Boundary behavior of solutions to the parabolic {$p$}-{Laplace}
             equation},
  journal = {Anal. PDE},
  volume  = {12},
  number  = {1},
  year    = {2019},
  pages   = {1--42},
  doi     = {10.2140/apde.2019.12.1},
}

@article{DPP03,
  author  = {Droniou, J. and Porretta, A. and Prignet, A.},
  title   = {Parabolic capacity and soft measures for nonlinear equations},
  journal = {Potential Anal.},
  volume  = {19},
  number  = {2},
  year    = {2003},
  pages   = {99--161},
  doi     = {10.1023/A:1023248531928},
}

@article{AGS16,
  author  = {Avelin, B. and Gianazza, U. and Salsa, S.},
  title   = {Boundary estimates for certain degenerate and singular
             parabolic equations},
  journal = {J. Eur. Math. Soc. (JEMS)},
  volume  = {18},
  number  = {2},
  year    = {2016},
  pages   = {381--424},
  doi     = {10.4171/JEMS/593},
}

@article{KM94,
  author  = {Kilpel{\"a}inen, T. and Mal{\'y}, J.},
  title   = {The {Wiener} test and potential estimates for quasilinear
             elliptic equations},
  journal = {Acta Math.},
  volume  = {172},
  number  = {1},
  year    = {1994},
  pages   = {137--161},
  doi     = {10.1007/BF02392793},
}

\end{document}